\theoremstyle{plain}
\newtheorem{theorem}{Theorem}[section]
\newtheorem{corollary}[theorem]{Corollary}
\newtheorem{lemma}[theorem]{Lemma}
\newtheorem{proposition}[theorem]{Proposition}
\theoremstyle{definition}
\newtheorem{remark}[theorem]{Remark}
\newtheorem{example}[theorem]{Example}
\newtheorem{conjecture}[theorem]{Conjecture}
\newtheorem{remarks/questions}[theorem]{Remarks/Questions}
\begin{document}
\title[Completely integrally closed PVMDs]{Completely integrally closed
Pr\"{u}fer $v$-multiplication domains}
\author{D. D. Anderson}
\author{David F. Anderson}
\author{Muhammad Zafrullah}
\address{Department of Mathematics\\
The University of Iowa\\
Iowa City, IA 52242 U.S.A.}
\email{dan-anderson@uiowa.edu}
\address{Department of Mathematics\\
The University of Tennessee\\
Knoxville, TN 37996-1320 U.S.A.}
\email{anderson@math.utk.edu}
\address{Department of Mathematics\\
Idaho State University\\
Pocatello, ID 83209 U.S.A.}
\email{mzafrullah@usa.net}
\thanks{2010 Mathematics Subject Classification: 13A15, 13F20, 13G05}
\thanks{Key words and phrases: star operation, $t$-operation, $v$-operation, $w$-operation, completely
integrally closed, Archimedean, AGCD domain, $v$-domain, PVMD, $t$-invertible, Krull domain, generalized Krull domain.}

1/23/17

\begin{abstract}
We study the effects on $D$ of assuming that the power series ring $D[[X]]$ is a $v$-domain or a PVMD.
We show that a PVMD $D$ is completely integrally closed if
and only if  $\bigcap_{n = 1}^{\infty}(I^{n})_{v} = (0)$ for every proper $t$-invertible \mbox{$t$-ideal} $I$ of $D$.
Using this, we show that if $D$ is an AGCD domain, then $D[[X]]$ is
integrally closed if and only if $D$ is a completely integrally
closed PVMD with torsion \mbox{$t$-class} group. We also determine several classes of PVMDs
for which being Archimedean is equivalent to being completely integrally closed
and give some new characterizations of integral domains related to Krull domains.
\end{abstract}

\maketitle

%%%%%%%%%%%%%%%%%%%%%%%%%%%%%%%%%%%%%%%%%%%%%%%%%%%
%%%%%%%%%%%%%%%%%%%%%%%%%%%%%%%%%%%%%%%%%%%%%%%%%%%
%%%%%%%%%%%%%% %%%                INTRODUCTION              %%%%%%%%%%%%%%
%%%%%%%%%%%%%%%%%%%%%%%%%%%%%%%%%%%%%%%%%%%%%%%%%%%
%%%%%%%%%%%%%%%%%%%%%%%%%%%%%%%%%%%%%%%%%%%%%%%%%%%

\section*{Introduction}

The aim of this paper is to show that $D$ is a $v$-domain when $D[[X]]$ is a $v$-domain
and to prove the following two results and record their consequences.
Throughout, $D$ is an integral domain with quotient field $K$.
Other necessary definitions will be provided later.

\begin{theorem} \label{A}
Let $D$ be an integral domain that is an intersection of
localizations at divisorial prime ideals. If $D[[X]]$ is a Pr\"{u}fer $v$-multiplication domain (PVMD),
then $D$ is a $v$-domain that is an
intersection of essential discrete rank-one valuation domains, and thus is completely integrally closed.
\end{theorem}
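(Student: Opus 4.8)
The plan is to reduce the theorem to a single statement about the essential valuation overrings of $D$: that the localization $D_P$ at each prime $P$ occurring in the given representation is a DVR. Once that is known, $D=\bigcap_{P}D_P$ exhibits $D$ as an intersection of essential discrete rank-one valuation domains; since each DVR is completely integrally closed and an intersection (inside $K$) of completely integrally closed domains is again completely integrally closed, $D$ is completely integrally closed, and it is a $v$-domain by the transfer result recorded below. Thus everything comes down to two claims about a fixed divisorial prime $P$: that $D_P$ is a valuation domain, and that this valuation is discrete of rank one.

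First I would record that $D$ is a $v$-domain. As a PVMD is a $v$-domain, the hypothesis makes $D[[X]]$ a $v$-domain, and I invoke the transfer result (an aim of the paper) that $D[[X]]$ a $v$-domain forces $D$ a $v$-domain. Next, to see that $D_P$ is a valuation domain, fix $a,b\in D\setminus\{0\}$ and set $I=(a,b)$. Because $D$ is a $v$-domain, $(II^{-1})_v=D$; as $P=P_v$ is divisorial, $II^{-1}\subseteq P$ would force $(II^{-1})_v\subseteq P_v=P\neq D$, so $II^{-1}\not\subseteq P$ and hence $(II^{-1})D_P=D_P$. Therefore $ID_P\cdot I^{-1}D_P=D_P$, so $ID_P=(a,b)D_P$ is an invertible ideal of the local ring $D_P$, hence principal; writing $aD_P+bD_P=cD_P$ shows that $a\mid b$ or $b\mid a$ in $D_P$, and $D_P$ is a valuation domain.

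The main obstacle is the passage from \emph{valuation domain} to \emph{DVR}, and this is exactly where the power series hypothesis must enter. The key link is the identity $D_P=(D[[X]])_{P[[X]]}\cap K$: indeed $X\in D[[X]]\setminus P[[X]]$, and comparing coefficients in a relation $ag=bf$ with $g\notin P[[X]]$ shows that any element of $K$ lying in $(D[[X]])_{P[[X]]}$ already lies in $D_P$, while the reverse inclusion is immediate. Granting that $P[[X]]$ is a $t$-ideal of $D[[X]]$ (a content-formula computation I would carry out separately), the PVMD hypothesis makes $W:=(D[[X]])_{P[[X]]}$ a valuation overring, and $D_P=W\cap K$ inherits its structure. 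To force discreteness I would argue on two fronts. For rank one, I would use that $D[[X]]$, being a PVMD, is integrally closed, together with the observation that an element $x\in K$ almost integral over $D$ produces, via $h=d\sum x^nX^n\in D[[X]]$, an element of $\mathrm{qf}(D[[X]])$ almost integral over $D[[X]]$; tracking this through $W$ rules out rank $\ge 2$ for $D_P$. For principality of the maximal ideal I would use divisoriality: since $P^{-1}\supsetneq D$, I seek $u\in P^{-1}$ with $u\notin D_P$, which in the valuation domain $D_P$ is equivalent to $PD_P$ being principal, and it is the representation $D=\bigcap_{P'}D_{P'}$ over divisorial primes that I would leverage to prevent $PD_P$ from being non-principal. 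A rank-one valuation domain with principal maximal ideal is a DVR, completing the step.

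The delicate point throughout the last step is that localization does not commute with forming power series, so $D_P[[X]]$ cannot be substituted for $W$; the argument must be conducted inside $W=(D[[X]])_{P[[X]]}$ and transported to $K$ by intersection. I expect the genuine work to lie in (i) verifying that $P[[X]]$ is a $t$-ideal, so that $W$ is a valuation domain, and (ii) converting the almost-integral and divisorial data into the two numerical facts---rank one and principal maximal ideal---that together pin down $D_P$ as a DVR. With each $D_P$ a DVR, the representation $D=\bigcap_P D_P$ delivers all three conclusions at once.
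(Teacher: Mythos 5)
Your skeleton is the same as the paper's up to the decisive step, and parts of it are correct and complete: divisoriality of $P$ gives (via Lemma~\ref{C}) that $(P[[X]])_v = P_v[[X]] = P[[X]]$, so $P[[X]]$ is a divisorial prime, hence a prime $t$-ideal of the PVMD $D[[X]]$, and $W = (D[[X]])_{P[[X]]}$ is a valuation domain; your identity $D_P = W \cap K$ with its coefficient-comparison proof is right; and your direct argument that $D_P$ is a valuation domain (from Proposition~\ref{D} plus divisoriality of $P$) is correct, though the paper never needs that step. The genuine gap is exactly the step you defer as ``the genuine work'': passing from ``$W$ is a valuation domain'' to ``$D_P$ is a discrete rank-one valuation domain.'' The paper does not reprove this; it invokes Arnold and Brewer's theorem \cite[Theorem 1]{AB}, which says precisely that $(D[[X]])_{P[[X]]}$ is a valuation ring if and only if $D_P$ is a discrete rank-one valuation ring, and that citation is the entire engine of the proof. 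Your proposed substitute does not close the gap. (Also, calling the $t$-ideal property of $P[[X]]$ a ``content-formula computation'' misses that divisoriality of $P$ is what makes it work, via Lemma~\ref{C}.)

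Concretely, both halves of your sketch fail. For rank one, you propose to combine integral closedness of $D[[X]]$ with the series $h = d\sum x^n X^n$ attached to an almost integral element. But integral closedness is strictly weaker than complete integral closedness: an integrally closed domain need not contain its almost integral elements, so exhibiting an element of $\mathrm{qf}(D[[X]])$ almost integral over $D[[X]]$ yields no contradiction. The hypothesis that would make this work --- $D[[X]]$ completely integrally closed --- is equivalent to $D$ being completely integrally closed, which is the conclusion you are trying to prove; invoking it is circular. Worse, what must be excluded is almost integrality over $D_P$, not over $D$: the multiplier $d$ lies in $D_P$ and the elements $dx^n \in D_P$ carry denominators varying with $n$, with no common denominator in sight --- exactly the localization-versus-power-series obstruction you flag but never resolve. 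For discreteness, the fact ``a rank-one valuation domain with principal maximal ideal is a DVR'' is true, but you give no argument that $PD_P$ is principal, only an intention to ``leverage'' the representation; note that a non-discrete rank-one valuation domain is completely integrally closed, so no almost-integrality or divisoriality information about $D_P$ alone can ever detect discreteness --- the power series hypothesis must enter a second time, and your sketch does not show how. Unless you are prepared to reprove the Arnold--Brewer theorem (which requires genuinely delicate constructions of power series played against the valuation on $W$), the correct move is simply to cite it, as the paper does.
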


\begin{theorem}  \label{B}
A PVMD $D$ is completely integrally closed if and only if
$\bigcap_{n = 1}^{\infty}(I^{n})_{v}=(0)$ for every proper $t$-invertible $t$-ideal $I$ of $D$.
\end{theorem}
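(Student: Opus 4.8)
The plan is to prove the two implications separately: the forward direction is soft and uses only $t$-invertibility together with complete integral closure, whereas the converse is where the PVMD hypothesis does the real work, and I would argue it by contraposition.

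For the forward direction, assume $D$ is completely integrally closed and let $I$ be a proper $t$-invertible $t$-ideal. Suppose toward a contradiction that some nonzero $d$ lies in $\bigcap_{n=1}^{\infty}(I^{n})_{v}$. Since $I$ is a proper $t$-invertible $t$-ideal, $I^{-1}=(D:I)$ properly contains $D$ (otherwise $(II^{-1})_{t}=I_{t}=I=D$), so I may choose $x\in I^{-1}\setminus D$. From $(I^{-1})^{n}(I^{n})=(I^{-1}I)^{n}\subseteq D$ I obtain $x^{n}\in (I^{-1})^{n}\subseteq (I^{n})^{-1}=((I^{n})_{v})^{-1}$, and therefore $dx^{n}\in (I^{n})_{v}\,((I^{n})_{v})^{-1}\subseteq D$ for every $n$. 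Thus $x$ is almost integral over $D$, so complete integral closure forces $x\in D$, a contradiction. Hence $\bigcap_{n=1}^{\infty}(I^{n})_{v}=(0)$; note that this half never uses that $D$ is a PVMD.

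For the converse I would prove the contrapositive: assuming $D$ is a PVMD that is \emph{not} completely integrally closed, I produce a proper $t$-invertible $t$-ideal whose $v$-powers meet in a nonzero ideal. Failure of complete integral closure yields $x\in K\setminus D$ and $0\neq d\in D$ with $dx^{n}\in D$ for all $n\geq 1$. Writing $x=s/t$ with $s,t\in D$, the natural candidate is
\[
I:=D\cap x^{-1}D=(D:_{D}x)=\frac{1}{s}\,(sD\cap tD),
\]
which is divisorial, hence a $t$-ideal, and proper because $1\in I$ would force $x\in D$. The crux --- and the step I expect to be the main obstacle --- is showing that $I$ is $t$-invertible; here the PVMD hypothesis enters through the lattice identity $(sD\cap tD)_{t}\,(sD+tD)_{t}=(stD)_{t}$, valid in a PVMD, which exhibits $sD\cap tD$ (and hence $I$) as a product of a principal ideal with the inverse of the finitely generated, hence $t$-invertible, ideal $(s,t)$.

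Finally I would verify that $d\in\bigcap_{n=1}^{\infty}(I^{n})_{v}$, giving the desired nonzero intersection. Because $I$ is $t$-invertible, so is each $I^{n}$, and a $t$-invertible $t$-ideal is the intersection of its extensions to the localizations $D_{M}$ at the maximal $t$-ideals $M$, each of which is a valuation domain; thus $(I^{n})_{v}=\bigcap_{M}(ID_{M})^{n}$. In $D_{M}$ the ideal $ID_{M}=D_{M}\cap x^{-1}D_{M}$ is generated by an element of value $m_{M}=\max(0,-v_{M}(x))$, so $d\in (ID_{M})^{n}$ amounts to $v_{M}(d)\geq n\,m_{M}$. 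When $m_{M}=0$ this holds since $d\in D\subseteq D_{M}$, and when $m_{M}=-v_{M}(x)>0$ it reads $v_{M}(dx^{n})\geq 0$, which holds because $dx^{n}\in D$. Hence $d\in (I^{n})_{v}$ for every $n$, completing the contrapositive. The delicate points to watch are precisely the justification of the $t$-invertibility identity and the interchange of the $v$-closure with localization, both of which rest on standard PVMD facts.
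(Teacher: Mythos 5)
Your proof is correct, and its mathematical core coincides with the paper's, though you package it differently. For the forward direction, the paper invokes a general fact (Corollary~\ref{L21}: in any completely integrally closed domain, $\bigcap_{n=1}^{\infty}(I^n)_v=(0)$ whenever $I_v\subsetneq D$, proved via the strongly Archimedean property), whereas you argue directly that a nonzero element of $\bigcap_{n=1}^{\infty}(I^n)_v$ makes every $x\in I^{-1}\setminus D$ almost integral; both arguments are soft, and your observation that this half needs no PVMD hypothesis matches the paper's. For the converse, the paper first proves a structural result, Proposition~\ref{M}: the complete integral closure $D''$ of a PVMD is the generalized ring of fractions $D_S$, where $S$ is the multiplicative system of $t$-invertible ideals $I$ with $\bigcap_{n=1}^{\infty}(I^n)_v\neq(0)$; the hypothesis of Theorem~\ref{B} then forces $D_S=D$. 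Your contrapositive is exactly the $D''\subseteq D_S$ half of that proposition, unpacked: your ideal $(D:_D x)=\frac{1}{s}(sD\cap tD)$ is the paper's conductor $(b):(a)$; your lattice identity (which, strictly, should carry an outer $t$-closure on the left-hand product, since a product of $t$-ideals need not be $t$-closed) is precisely how its $t$-invertibility is justified there (cf.\ the proof of Proposition~\ref{X7}); and your valuation-theoretic verification that $d\in(I^n)_v=\bigcap_{M}(ID_M)^n$ is the same computation as the paper's $w$-operation identity $(b^n):(a^n)=(((b):(a))^n)_w$ taken over the maximal $t$-ideals $M$, just written in terms of values. What the paper's packaging buys is a reusable description of $D''$ (exploited again for essential domains in Proposition~\ref{X7} and Corollary~\ref{X8}); what yours buys is a leaner, self-contained proof of Theorem~\ref{B} that avoids generalized rings of fractions and multiplicative systems of ideals altogether.
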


Several classes of integral domains of interest, such as Noetherian, Krull,
Mori, and the so-called H-domains that have maximal $t$-ideals divisorial,
fall under the umbrella of integral domains that are intersections of localizations
at divisorial prime ideals. We show that an H-domain $D$ is a Krull domain
if and only if $D[[X]]$ is a PVMD. As a consequence of Theorem \ref{B}, we
show that if $D$ is an almost GCD (AGCD) domain, then $D[[X]]$ is integrally closed
if and only if $D$ is a completely integrally closed PVMD with torsion $t$-class group.
We also show that if $D$ is an AGCD domain such that $D[[X]]$
is integrally closed and every nonzero nonunit of $D$ has only
finitely many minimal prime ideals, then $D$ is a locally finite intersection
of rank-one valuation domains. We isolate a property of integral domains of finite
$t$-character and use it in combination with complete integral closure to give some
new characterizations of integral domains related to Krull domains and their
generalizations. We also answer a recently asked question about
the ring of power series over a Krull-like PVMD.

As our work involves star operations,
it seems pertinent to give the reader an idea of some of the notions
involved. Let $D$ be an integral domain with quotient field $K$, and let
$F(D)$ (resp., $f(D)$) be the set of nonzero fractional ideals (resp.,
nonzero finitely generated fractional ideals) of $D$.

A \emph{star operation} $\ast $ on $D$ is a function $\ast \colon
F(D)\longrightarrow F(D)$ that satisfies the following properties for every $I, J \in F(D)$ and $0 \neq
x \in K$:

(i) $(x)^{\ast} = (x)$ and $(xI)^{\ast} = xI^{\ast}$,

(ii) $I \subseteq I^{\ast}$, and $I^{\ast} \subseteq J^{\ast}$ whenever
$I \subseteq J$, and

(iii) $(I^{\ast})^{\ast} = I^{\ast}$.

\vspace{0.1cm} \noindent An $I \in F(D)$ is called a \emph{$
\ast $-ideal} if $I^{\ast} = I$ and a \emph{$\ast$-ideal of finite type} if
$I = J^{\ast}$ for some $J \in f(D)$. A star operation $\ast $ is said to be of
\emph{finite character} if $I^{\ast} = \bigcup \{ \, J^{\ast} \mid J\subseteq I$
and $J \in f(D) \, \}$. For $I \in F(D)$, let $I_d = I$, $I^{-1} = (D :_K I) = \{ \, x\in K\mid xI\subseteq D \, \}$,
$I_{v}=(I^{-1})^{-1}$, $I_{t}=\bigcup \{ \, J_{v} \mid J \subseteq I$ and $J \in
f(D) \, \}$, and $I_{w} =  \{ \, x \in K \mid xJ \subseteq I$ for some $J \in f(D)$ with $
J_{v} = \, D  \, \}$. A $v$-ideal is sometimes also called a \emph{divisorial ideal}.
The functions defined by $I \mapsto I_{d}$, $I \mapsto I_{v}$, $I \mapsto I_{t}$, and
$I \mapsto I_{w}$ are all examples of star operations. Given two star
operations $\ast _{1},\ast _{2}$ on $D$, we say that $\ast _{1} \leq \ast _{2}$ if
$I^{\ast_{1}} \subseteq I^{\ast_{2}}$ for every $I \in F(D)$. Note that $\ast
_{1} \leq \ast _{2}$ if and only if $(I^{\ast_{1}})^{\ast_{2}} = (I^{\ast_{2}})^{\ast_{1}} =
I^{\ast_{2}}$ for every $I \in F(D)$.  The $d$-operation, $t$-operation, and $w$-operation
all have finite character, $d \leq \rho \leq v$ for every star operation $\rho$,
and $\rho \leq t$ for every star operation $\rho$ of finite character.
We will often use the two facts that $(IJ)^{\ast} = (IJ^{\ast})^{\ast} = (I^{\ast}J^{\ast})^{\ast}$
for every star operation $\ast$ and $I, J \in F(D)$ and $I_v = I_t$ for every $I \in f(D)$.
An $I \in F(D)$ is said to be \emph{$\ast $-invertible} if $(II^{-1})^{\ast} = D$.
If $I$ is $\ast $-invertible for $\ast $ of finite character, then both $I^{\ast}$
and $I^{-1}$ are $v$-ideals of finite type.
The reader in need of more introduction may consult \cite{z00} or \cite[Sections 32 and 34]
{gilmer}.

For a star operation $\ast$, a \emph{maximal $\ast$-ideal} is an integral $\ast$-ideal that is
maximal among proper integral $\ast$-ideals. Let $\ast $-Max$(D)$ be
the set of maximal $\ast $-ideals of $D$. For a star operation
$\ast$ of finite character, it is well known that a maximal $\ast$-ideal is
a prime ideal; every proper integral $\ast $-ideal is contained in a maximal $\ast$-ideal;
and $\ast$-Max$(D)\neq \emptyset $ if $D$ is not a field. Moreover, $t$-Max$
(D) =$ \mbox{$w$-Max$(D)$;} $I_{w} = \bigcap _{M \in t\text{-Max}(D)}ID_{M}$ for every $I \in
F(D)$; and $I_{w}D_{M} = ID_{M}$ for every $I \in F(D)$ and $M \in t$-Max$(D)$.

Recall that an integral domain $D$ with quotient field $K$ is
\emph{completely integrally closed} if whenever $rx^n \in D$ for
$x \in K$, $0 \neq r \in D$, and every integer $n \geq 1$, then $x \in D$.
Equivalently, $D$ is completely integrally closed if and only if $(II^{-1})_v = D$ for every $I \in F(D)$ \cite[Theorem 34.3]{gilmer}.
We will use the well-known facts that a completely integrally closed domain is integrally closed \cite[Theorem 13.1(2)]{gilmer},
an intersection of completely integrally closed domains is completely integrally closed,
$D[[X]]$ is completely integrally closed if and only if $D$ is completely integrally closed \cite[Theorem 13.9]{gilmer},
a Krull domain is completely integrally closed, and a valuation domain $D$ is completely integrally closed
if and only if $D$ has rank at most one \cite[Theorem 17.5(3)]{gilmer}.

We say that an integral domain $D$ is a \emph{Pr\"{u}fer $v$-multiplication domain} (\emph{PVMD}) if every nonzero finitely
generated ideal $I$ of $D$ is $t$-invertible, i.e., $(II^{-1})_{t} = D$ for every $I \in f(D)$.
If $D_P$ is a valuation domain for a nonzero prime ideal $P$ of $D$, then $P$ is necessarily a $t$-ideal of $D$.
For PVMDs, the converse is true. Indeed, Griffin \cite[Theorem 5]{Gr} showed that $D$ is a PVMD if and only if $D_M$ is a
valuation domain for every maximal $t$-ideal $M$ of $D$. As indicated in \cite{z00},
Kang \cite{Kan} showed that an
integrally closed domain $D$ is a PVMD if and only if $t = w$ over $D$. An integral
domain $D$ is a \emph{$v$-domain} if every nonzero finitely
generated ideal $I$ of $D$ is \mbox{$v$-invertible}, i.e., $(II^{-1})_{v} = D$ for every $I \in f(D)$.
Equivalently, $D$ is a PVMD (resp., $v$-domain) if and only if for every $I \in f(D)$,
there is a $J \in f(D)$ (resp., \mbox{$J \in F(D)$}) such that $(IJ)_v = D$. Thus, a PVMD is a $v$-domain.
Note that a $v$-domain (and hence a PVMD) is integrally closed and a completely integrally closed domain is a \mbox{$v$-domain}.
It can be shown that $D$ is a PVMD (resp., \mbox{$v$-domain}) if
and only if every nonzero two-generated ideal of $D$ is \mbox{$t$-(resp., $v$-)invertible}
 \mbox{\cite[Theorem 2.2]{AAFZ}}. From this, it is easy to conclude that a
\mbox{$v$-domain} $D$ is a PVMD if and only if $aD \cap bD$ is a $v$-ideal of finite type
for every $0 \neq a, b\in D$.

A valuation overring $V$ of an integral domain $D$ is called an
\emph{essential valuation domain} if $V = D_P$ for some prime ideal $P$ of $D$ ($P$ is called an \emph{essential} or \emph{valued-prime ideal}).
We call $D$ an \emph{essential domain} if $D = \bigcap_{P \in F}D_P$ for some family $F$ of essential prime ideals of $D$.
An essential domain is integrally closed, and a PVMD is an essential domain since $D = \bigcap_{M \in t\text{-Max}
(D)}D_M$.

An integral domain $D$ is called a \emph{GCD domain} if $(a) \cap (b)$ is principal for every
$ 0 \neq a,b\in D$ and an \emph{almost GCD} (\emph{AGCD}) \emph{domain} if for every
$ 0 \neq a,b\in D$, there is an integer $n \geq 1$ such that $(a^n) \cap (b^n)$ is principal.
Thus, a GCD domain is a a PVMD in which every $v$-ideal of finite type is principal, and a GCD domain is an AGCD domain.
AGCD domains were introduced in
\cite{Z} and further studied in \cite{AZ}. It is well known that $D$ is an
AGCD domain if and only if for every
$0 \neq a_{1}, \ldots ,a_{s} \in D$, there is an integer $k \geq 1$ such that
$(a_{1}^{k}, \ldots, a_{s}^{k})_{v}$ is principal \cite[Remark after Lemma 3.3]{AZ}.

The set $t$-inv$(D)$ of $t$-invertible fractional $t$-ideals of $D$ is an abelian group under the \mbox{$t$-multiplication}
$I \ast J = (IJ)_t$. Its subset $P(D)$ of nonzero principal fractional ideals is
a subgroup of $t$-inv$(D)$. The quotient group $t$-inv$(D)/P(D)$ is
called the \emph{class group} (or \emph{$t$-class group}) of $D$ and is usually denoted
by $Cl_{t}(D)$. The group $Cl_{t}(D)$ was introduced in \cite{B}, where it was pointed
out that $Cl_{t}(D)$ is the divisor class group when $D$ is a Krull domain and
$Cl_{t}(D)$ is the ideal class group when $D$ is a Pr\"{u}fer domain. Also, it was shown in
\cite[Corollary 3.8 and Theorem 3.9]{Z} that an integrally closed AGCD domain is a PVMD with torsion $t$-class group
and that a PVMD with torsion $t$-class group is an AGCD domain.
Thus, an integral domain $D$ is a PVMD with torsion $t$-class group if and only if $D$ is an integrally closed AGCD domain.
For more on the $t$-class group, see \cite{A}.

In Section~\ref{s:1}, we show that if $D[[X]]$ is a $v$-domain, then $D$ is a $v$-domain,
but not necessarily conversely. We also show that if $D$ is an H-domain
(every maximal \mbox{$t$-ideal} of $D$ is divisorial), then $D[[X]]$ is a PVMD if and only if  $D$
is a Krull domain. This answers a question recently raised in \cite{EK}. In Section~
\ref{s:2}, we show that if an integral domain $D$ is completely integrally closed,
then $\bigcap_{n = 1}^{\infty}(I^{n})_{v}= (0)$ for every ideal $I$ of $D$ with $I_v \subsetneq D$
and that the complete integral closure $D''$ of a PVMD $D$ is
a generalized ring of fractions $D_{S}$, where $S$ is the
multiplicatively closed set of $t$-invertible $t$-ideals $I$ of $D$ such that
$\bigcap_{n = 1}^{\infty}(I^{n})_{v} \neq (0)$, thus establishing Theorem \ref{B}.
In this section, we also determine several special classes of PVMDs $D,$ including
PVMDs with torsion $t$-class group, whose being completely integrally
closed requires only that $D$ be Archimedean, i.e., $\bigcap_{n = 1}^{\infty}(x^{n}) = (0)$ for every nonunit $x \in D$.
In Sections~\ref{s:3} and~\ref{s:4}, we continue the work begun in Section
\ref{s:2} and use the notion of a potent maximal $t$-ideal from \cite{ACZ}
to provide new characterizations of integral domains of interest, such as
UFDs, PIDs, Krull domains, and generalized Krull domains. In Section~\ref{s:5},
we investigate several ``Archimedean-like'' conditions for an integral domain.

%%%%%%%%%%%%%%%%%%%%%%%%%%%%%%%%%%%%%%%%%%%%%%%%%%%
%%%%%%%%%%%%%%%%%%%%%%%%%%%%%%%%%%%%%%%%%%%%%%%%%%%
%%%%%%%%%%%%%% %%%             When D[[X]] is a v-domain     %%%%%%%%%%%%%%%
%%%%%%%%%%%%%%%%%%%%%%%%%%%%%%%%%%%%%%%%%%%%%%%%%%%
%%%%%%%%%%%%%%%%%%%%%%%%%%%%%%%%%%%%%%%%%%%%%%%%%%%

\section{When $D[[X]]$ is a $v$-domain}       \label{s:1}

We will need the following results from \cite{DH} on ideals in power series rings, in
connection with star operations.

\begin{lemma}  \label{C} \emph{(\cite[Proposition 2.1]{DH})}
Let $I$ be a nonzero fractional ideal of an integral domain $D$.

\emph{(1)} $(ID[[X]])^{-1}$ $=$ $I^{-1}[[X]]$ $=$ $(I[[X]])^{-1}$.

\emph{(2)} $(ID[[X]])_{v}=I_{v}[[X]]=(I[[X]])_{v}$.
\end{lemma}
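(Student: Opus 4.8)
The plan is to prove both parts by a direct computation inside the quotient field of $D[[X]]$, reducing everything to a coefficient-wise condition once I have forced the relevant elements to lie in $K[[X]]$. First I would record the elementary inclusion $ID[[X]] \subseteq I[[X]]$: writing a generator $\sum_i a_i g_i$ with $a_i \in I$ and $g_i \in D[[X]]$ and collecting the coefficient of each $X^n$, I get an element of $I$ because $I$ is a $D$-module. Taking inverses (which reverses inclusions) then gives the sandwich $(I[[X]])^{-1} \subseteq (ID[[X]])^{-1}$, so for part (1) it suffices to establish $I^{-1}[[X]] \subseteq (I[[X]])^{-1}$ and $(ID[[X]])^{-1} \subseteq I^{-1}[[X]]$, after which the chain $I^{-1}[[X]] \subseteq (I[[X]])^{-1} \subseteq (ID[[X]])^{-1} \subseteq I^{-1}[[X]]$ forces all three to coincide. (Since inverses and all the operations involved are homogeneous under scaling by $K^{*}$, one may also replace $I$ by $dI$ with $d \in D$ and assume $I \subseteq D$ is integral; this is cosmetic but makes the ambient containments visibly comparable.)

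The easy inclusion $I^{-1}[[X]] \subseteq (I[[X]])^{-1}$ is the routine direction: given $h = \sum h_n X^n$ with each $h_n \in I^{-1}$ and $g = \sum b_m X^m \in I[[X]]$, every coefficient of $hg$ is a finite sum of products $h_n b_m \in I^{-1}I \subseteq D$, so $hg \in D[[X]]$. The interesting direction is showing that an arbitrary element $h$ of $(ID[[X]])^{-1}$ — a priori only an element of the full quotient field $L$ of $D[[X]]$ — actually lies in $I^{-1}[[X]]$. Here is the one genuinely non-formal step: the quotient field $L$ is strictly larger than $K[[X]]$, so before comparing coefficients I must first land $h$ inside $K[[X]]$. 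This I do by the localization trick: since each constant $a \in I$ belongs to $ID[[X]]$, I have $ha \in D[[X]]$, so for any fixed nonzero $a \in I$ one gets $h \in \tfrac{1}{a}D[[X]] \subseteq K[[X]]$. I therefore expect this quotient-field subtlety to be the main obstacle, and I would flag explicitly that it is what prevents a purely formal coefficient argument from the outset.

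Once $h = \sum h_n X^n$ is known to lie in $K[[X]]$, the rest is coefficient-wise: the relation $ha = \sum (h_n a) X^n \in D[[X]]$ for every $a \in I$ says exactly that $h_n a \in D$ for all $n$ and all $a \in I$, i.e. $h_n \in I^{-1}$ for every $n$, so $h \in I^{-1}[[X]]$. This same argument, using only that constants $a \in I$ lie in both $ID[[X]]$ and $I[[X]]$, simultaneously yields $(ID[[X]])^{-1} \subseteq I^{-1}[[X]]$ and $(I[[X]])^{-1} \subseteq I^{-1}[[X]]$, completing part (1).

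For part (2) I would deduce it from part (1) applied twice, with no new work. By definition $(ID[[X]])_v = ((ID[[X]])^{-1})^{-1}$, and part (1) gives $(ID[[X]])^{-1} = I^{-1}[[X]]$; applying part (1) again to the nonzero fractional ideal $I^{-1}$ gives $(I^{-1}[[X]])^{-1} = (I^{-1})^{-1}[[X]] = I_v[[X]]$, whence $(ID[[X]])_v = I_v[[X]]$. The identical two-step computation starting from $(I[[X]])^{-1} = I^{-1}[[X]]$ gives $(I[[X]])_v = I_v[[X]]$, so $(ID[[X]])_v = I_v[[X]] = (I[[X]])_v$ as claimed.
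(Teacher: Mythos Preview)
The paper does not actually prove this lemma; it quotes it from \cite[Proposition 2.1]{DH} (attributed there to D.~F.~Anderson and B.~G.~Kang) and uses it as a black box. So there is no in-paper proof to compare your argument against.

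That said, your argument is correct and is essentially the standard one. The one substantive point --- that an element of $(ID[[X]])^{-1}$, a priori only in the full quotient field of $D[[X]]$, can be forced into $K[[X]]$ by multiplying by a nonzero constant $a \in I$ --- is exactly the key observation, and you handle it cleanly. After that, the coefficientwise check and the sandwich $I^{-1}[[X]] \subseteq (I[[X]])^{-1} \subseteq (ID[[X]])^{-1} \subseteq I^{-1}[[X]]$ are routine, and deducing (2) from two applications of (1) is the expected move.
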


Lemma \ref{C} was attributed to D. F. Anderson and B.G. Kang in \cite{DH}.
Using Lemma~\ref{C}, we first prove
the following result.

\begin{proposition}  \label{D}
If $D$ is an integral domain such that $D[[X]]$ is a $v$-domain, then $D$ is
a $v$-domain.
\end{proposition}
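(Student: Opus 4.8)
The plan is to show directly that every nonzero finitely generated ideal $I$ of $D$ is $v$-invertible, i.e., that $(II^{-1})_v = D$. The bridge between $D$ and $D[[X]]$ is Lemma~\ref{C}, which lets me transfer inverses and $v$-closures between an ideal of $D$ and its companions in $D[[X]]$.

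First I would fix $I = (a_1, \ldots, a_n) \in f(D)$. Then $ID[[X]] = (a_1, \ldots, a_n)D[[X]]$ is a nonzero finitely generated ideal of $D[[X]]$, so the hypothesis that $D[[X]]$ is a $v$-domain gives $(ID[[X]] \cdot (ID[[X]])^{-1})_v = D[[X]]$. By Lemma~\ref{C}(1), $(ID[[X]])^{-1} = I^{-1}[[X]]$, so this reads $((ID[[X]])\, I^{-1}[[X]])_v = D[[X]]$.

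The heart of the argument is to sandwich the product $(ID[[X]])\, I^{-1}[[X]]$ between two ideals whose $v$-closures are both controlled by Lemma~\ref{C}(2). On the one hand, each $a_j$ lies in $ID[[X]]$ and each element of $I^{-1}$ lies in $I^{-1}[[X]]$ as a constant series, and $II^{-1} = \sum_j a_j I^{-1}$, so $(II^{-1})D[[X]] \subseteq (ID[[X]])\, I^{-1}[[X]]$. On the other hand, $ID[[X]] \subseteq I[[X]]$ since every coefficient of an element of $ID[[X]]$ lies in $I$, and multiplying a series with coefficients in $I$ by one with coefficients in $I^{-1}$ yields a series whose coefficients lie in $II^{-1}$; hence $(ID[[X]])\, I^{-1}[[X]] \subseteq (II^{-1})[[X]]$. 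Applying the $v$-operation along this chain and invoking Lemma~\ref{C}(2), both outer terms collapse to the common value $(II^{-1})_v[[X]]$, forcing $((ID[[X]])\, I^{-1}[[X]])_v = (II^{-1})_v[[X]]$.

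Combining this with the display from the second paragraph yields $(II^{-1})_v[[X]] = D[[X]]$, and it remains only to descend from the power series ring: the constant series $1$ then has all of its coefficients, in particular the constant term $1$, in $(II^{-1})_v$, so $(II^{-1})_v = D$. Since $I$ was an arbitrary nonzero finitely generated ideal, $D$ is a $v$-domain. I expect the only delicate point to be the careful verification of the two containments in the sandwich—especially that the coefficientwise products of the two series land in $II^{-1}$—as everything else is a formal consequence of Lemma~\ref{C} and the definition of a $v$-domain.
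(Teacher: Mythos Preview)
Your proof is correct and follows essentially the same approach as the paper: extend a finitely generated $I$ to $ID[[X]]$, use Lemma~\ref{C} to identify $(ID[[X]])^{-1}$ with $I^{-1}[[X]]$ and to pass the $v$-closure through the coefficients, and conclude $(II^{-1})_v[[X]]=D[[X]]$. The only cosmetic difference is that the paper uses a one-sided chain $D[[X]]=((ID[[X]])(ID[[X]])^{-1})_v\subseteq ((II^{-1})[[X]])_v=(II^{-1})_v[[X]]\subseteq D[[X]]$, whereas you set up a two-sided sandwich; your lower containment $(II^{-1})D[[X]]\subseteq (ID[[X]])I^{-1}[[X]]$ is correct but not actually needed.
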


\begin{proof}
Let $D[[X]]$ be a $v$-domain; so $(JJ^{-1})_{v} = D[[X]]$ for every nonzero finitely generated ideal $J$ of
$D[[X]]$. In particular, let $J = ID[[X]]$ for $I \in f(D)$. Then
$D[[X]] = ((ID[[X]])(ID[[X]])^{-1})_{v} \subseteq  (I[[X]]I^{-1}[[X]])_{v} \subseteq
((II^{-1})[[X]])_{v} = \linebreak
 (II^{-1})_{v}[[X]] \subseteq D[[X]]$ by Lemma~\ref{C}; so $(II^{-1})_v[[X]] = D[[X]]$.
Thus, $(II^{-1})_v = D$ for every $I \in f(D)$; so $D$ is a $v$-domain.
\end{proof}

To see that the converse of Proposition~\ref{D} is not true, note that any $v$-domain $D$ with a nonunit $x$ such that
$\bigcap_{n = 1}^{\infty} (x^{n}) \neq (0)$ can serve as a counterexample. Suppose
that $D[[X]]$ is a $v$-domain. Then, in particular, $D[[X]]$ is integrally
closed. But, by \cite[Theorem 0.1]{Ohm} (or \cite[Theorem 13.10 and Proposition 13.11]{gilmer}), $D[[X]]$ is integrally closed
implies that $D$ is integrally closed and
$\bigcap_{n = 1}^{\infty} (x^{n}) = (0)$ for every nonunit $x \in D$. The presence of a nonunit $x$ with $\bigcap_{n = 1}^{\infty}
(x^{n}) \neq (0)$ will contradict this. Now, take $D$ to be a rank-two valuation
domain and $x \in D$ a nonunit that is not in the height-one prime ideal of $D$; so $\bigcap_{n = 1}^{\infty} (x^{n}) \neq (0)$.
Then $D$ is a PVMD, and hence a $v$-domain, but $D[[X]]$ is not integrally closed, and thus not a $v$-domain.

\begin{remark} \label{D1}
In general, for ideals $I$
and $J$ of $D$, $I[[X]]J[[X]] \subseteq IJ[[X]]$, but $I[[X]]J[[X]] \neq
IJ[[X]]$.  For an example showing that
generally $I[[X]]J[[X]] \neq IJ[[X]]$, see \cite[page 352]{AK}.
\end{remark}

\begin{corollary} \label{E}
If $D$ is an integral domain such that $D[[X]]$ is completely integrally
closed, then $D$ is completely integrally closed.
\end{corollary}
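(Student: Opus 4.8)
The plan is to run the argument of Proposition~\ref{D} essentially verbatim, but now allowing $I$ to range over all of $F(D)$ rather than only over $f(D)$. The key observation that enables this is that the characterization of complete integral closure cited in the introduction---namely, that $D$ is completely integrally closed if and only if $(II^{-1})_v = D$ for every $I \in F(D)$ \cite[Theorem 34.3]{gilmer}---quantifies over all nonzero fractional ideals, whereas the $v$-domain condition quantifies only over $f(D)$. Correspondingly, both Lemma~\ref{C} and Remark~\ref{D1} are stated for arbitrary nonzero fractional ideals, so the chain of containments used for Proposition~\ref{D} goes through with no finiteness hypothesis.

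Concretely, I would first assume that $D[[X]]$ is completely integrally closed, so that $(JJ^{-1})_v = D[[X]]$ for every $J \in F(D[[X]])$. Fixing an arbitrary $I \in F(D)$ and specializing to $J = ID[[X]]$, I would compute, exactly as in Proposition~\ref{D},
\[
D[[X]] = ((ID[[X]])(ID[[X]])^{-1})_v \subseteq (I[[X]]I^{-1}[[X]])_v \subseteq ((II^{-1})[[X]])_v = (II^{-1})_v[[X]] \subseteq D[[X]],
\]
using $ID[[X]] \subseteq I[[X]]$ together with $(ID[[X]])^{-1} = I^{-1}[[X]]$ from Lemma~\ref{C}(1), the containment $I[[X]]I^{-1}[[X]] \subseteq (II^{-1})[[X]]$ from Remark~\ref{D1}, the identity $((II^{-1})[[X]])_v = (II^{-1})_v[[X]]$ from Lemma~\ref{C}(2), and finally $(II^{-1})_v \subseteq D_v = D$ since $II^{-1} \subseteq D$. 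This forces $(II^{-1})_v[[X]] = D[[X]]$.

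The remaining step is to pass from the equality of power series rings back to the coefficient domain: comparing constant terms, an equality $A[[X]] = B[[X]]$ of fractional ideals of $D$ forces $A = B$, so $(II^{-1})_v[[X]] = D[[X]]$ yields $(II^{-1})_v = D$. Since $I \in F(D)$ was arbitrary, $D$ is completely integrally closed by \cite[Theorem 34.3]{gilmer}.

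I do not expect a genuine obstacle here, since the entire computational content is already present in Proposition~\ref{D}; the only thing to verify is that that argument never used finite generation of $I$ in an essential way. The one point worth flagging is conceptual rather than technical: one must invoke the characterization of complete integral closure over all of $F(D)$, not the $v$-domain characterization over $f(D)$, as these differ precisely in the range of the ideals quantified. (Alternatively, the corollary is immediate from the cited equivalence \cite[Theorem 13.9]{gilmer} that $D[[X]]$ is completely integrally closed if and only if $D$ is, but deriving it from Proposition~\ref{D} keeps the development self-contained.)
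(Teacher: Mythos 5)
Your proposal is correct and is essentially the paper's own proof: the paper likewise runs the Proposition~\ref{D} computation with $J = ID[[X]]$ for an arbitrary nonzero ideal $I$ of $D$, using the characterization $(II^{-1})_v = D$ for all $I \in F(D)$, and it also notes the same alternative route via the well-known equivalence that $D[[X]]$ is completely integrally closed if and only if $D$ is.
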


The proof entails noting that $D[[X]]$ is completely integrally closed if
and only if  $(JJ^{-1})_{v} = D[[X]]$  for every non-zero ideal $J$ of $D[[X]]$,
and taking, in particular, $J = ID[[X]]$ for any nonzero
ideal $I$ of $D$ as in the above proof. However, it is well known that $D[[X]]$
is completely integrally closed if and only if $D$ is completely integrally closed.

%%Nowadays, a PVMD $D$ is called a
%%\emph{generalized Krull domain} if
%%$D_{M}$ is a strongly discrete valuation domain as defined in
%%\cite{EKW} for every maximal $t$-ideal $M$ of $D$ and every proper principal ideal of $D$ has only a finite number of
%%minimal prime ideals. In particular, a generalized Krull domain is integrally closed. If $D$ is a generalized Krull domain
%%and $S$ is a multiplicative set of $D$ such that $D^{(S)} = D+XD_{S}[X]$ is a
%%PVMD, then $D^{(S)}$ is a generalized Krull domain \cite[Proposition 3.4]{EGZ}.

An integral domain is called a \emph{generalized Krull domain} (cf. \cite[page 524]{gilmer})
if it is a locally finite intersection of essential rank-one valuation domains.
This terminology goes back at least to Griffin \cite{Gr1}, and such rings were considered by Ribenboim \cite{Ribenboim}.
Popescu \cite{NP} introduced the notion of a generalized Dedekind domain via localizing systems.
Nowadays, the following equivalent definition is usually given:
an integral domain is a \emph{generalized Dedekind domain} if it is a
strongly discrete Pr\"{u}fer domain (i.e., $P \neq P^2$ for every prime ideal $P$) and every (prime) ideal $I$ has
$\sqrt{I} = \sqrt{(a_1, \ldots, a_n)}$ for some $a_1, \ldots, a_n \in I$
(or equivalently, every principal ideal has only finitely many minimal prime ideals).
(To add to the confusion, Zafrullah \cite{Z1} defined an integral domain to be a
generalized Dedekind domain if every divisorial ideal is invertible. In \cite{AK1},
these rings were called pseudo-Dedekind domains in analogy with pseudo-principal ideal domains,
i.e., integral domains in which every divisorial ideal is principal.)
Based on the strongly discrete definition of a generalized Dedekind domain,
El Baghdadi \cite{E} defined an integral domain $D$ to be a generalized Krull domain
if it is a strongly discrete PVMD (i.e., $D_M$ is a strongly discrete valuation domain
for every maximal $t$-ideal $M$ of $D$) and every principal ideal has only finitely
many minimal prime ideals, or equivalently, $D$ is a PVMD with $P \neq (P^2)_t$
and $P = \sqrt{J_t}$ for some finitely generated ideal $J$ of $D$ for every prime $t$-ideal $P$ of $D$.
To avoid confusion, we (and hopefully others), will use the terminlogy ``generalized Krull domain''
as defined by Griffin and will call the generalized Krull domains as defined by El Baghdadi \emph{Krull-like PVMDs}.
In particular, Krull-like PVMDs are integrally closed, and generalized Krull domains are completely integrally closed.
While both generalized Krull domains and Krull-like PVMDs are, of course, PVMDs, neither definition implies the other.
For example, while any valuation domain $D$ is a PVMD, $D$ is a generalized Krull domain (resp., Krull-like PVMD)
if and only if $D$ has rank at most one (resp., is strongly discrete).

In \cite[Question 2.4(2)]{EK}, El Baghdadi and
Kim asked the following question: If $D$ is a Krull-like PVMD
domain, is $D[[X]]$ a Krull-like PVMD? The next example gives a negative answer to their question.
The question of \cite{EK} can also be answered in another way via Corollary~\ref{K2}.

\begin{example} \label{F}
Let $D$ be a Krull domain that is not a
field. Then, for every multiplicative subset $S$ of $D$ with at least one
nonunit of $D$, the ring $R = D+YD_{S}[Y]$ is a Krull-like PVMD such
that $R[[X]]$ is not a Krull-like PVMD.
(For a specific example, let $R = \mathbb{Z} + Y\mathbb{Q}[Y]$.)
To see this, note that if $D$ is a Krull domain, then
$D + YD_{S}[Y]$ is a PVMD for every multiplicative subset $S$ of
$D$ \cite[Corollary 2.7]{AAZ2}. Also, as $D$ is a Krull
domain and $D+YD_{S}[Y]$ is a PVMD, $R = D + YD_{S}[Y]$ is a
Krull-like PVMD \cite[Proposition 3.4]{EGZ}. Now, let $d \in D$ be one of
the promised nonzero nonunits in $S$. Then $(0) \neq
YD_{S}[Y]\subseteq \bigcap_{n = 1}^{\infty} d^{n}R$, and as in the discussion concerning the failure of
the ``converse'' of Proposition~\ref{D}, $R[[X]]$ is not integrally closed. Thus, $R[[X]]$ is not a Krull-like PVMD.
\end{example}

We now give the proof of Theorem~\ref{A} from the Introduction.

\begin{proof}
(of Theorem \ref{A}) Let $D = \bigcap_{P \in F}D_{P}$, where
$F$ is a set of divisorial prime ideals of $D$, and suppose that $D[[X]]$ is a
PVMD, and hence a $v$-domain. Every $P \in F$ is divisorial, and so
$P[[X]]$ is a divisorial ideal of $D[[X]]$ by Lemma \ref{C}. It is
well known that if $P$ is a prime ideal of $D$, then $P[[X]]$ is a prime
ideal of $D[[X]]$. Also, every divisorial ideal is a $t$-ideal. So for $P \in F$,
the prime ideal $P[[X]]$ is a $t$-ideal of the
PVMD $D[[X]]$. Thus, $D[[X]]_{P[[X]]}$ is a valuation domain,
and hence $D_P$ is an essential discrete rank-one valuation domain \cite[Theorem 1]{AB}.
Thus, $D_P$ is
completely integrally closed for every $P \in F$,
and hence $D = \bigcap_{P \in F}D_P$ is completely
integrally closed. That $D$ is a $v$-domain follows from Proposition \ref{D}
or the fact that a completely integrally closed domain is a $v$-domain.
\end{proof}

We do not know if the integral domain $D$ in Theorem \ref{A} is actually a PVMD, but for a
$v$-domain $D$ to be a PVMD, all we need to check is that $D$ is a \emph{$v$-finite
conductor domain}, i.e., $aD \cap bD$ is a $v$-ideal of finite type
for every $0 \neq a,b \in D$ \cite[Corollary 4]{FZ}. Thus, we have the following
result.

\begin{corollary}  \label{G}
If $D$ is a $v$-finite conductor domain and $D[[X]]$
is a $v$-domain, then $D$ is a PVMD.
\end{corollary}

In some instances, the $v$-finite conductor property gets provided by
indirect means.

\begin{corollary} \label{H}
Let $D$ be an integral domain that is a locally finite
intersection of localizations at divisorial prime ideals. Then $D[[X]]$ is a PVMD
if and only if $D$ is a Krull domain.
\end{corollary}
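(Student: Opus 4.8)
The plan is to prove the two implications separately, with the forward direction resting on Theorem~\ref{A} and the backward direction on the classical behavior of power series over Krull domains.

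First I would dispose of the easy implication. Suppose $D$ is a Krull domain. It is well known that the power series ring over a Krull domain is again a Krull domain, so $D[[X]]$ is Krull; and since a Krull domain is a PVMD, $D[[X]]$ is a PVMD. This settles the ``if'' direction, and it is worth noting that this direction does not really exploit local finiteness beyond what being Krull already supplies.

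For the converse, suppose $D[[X]]$ is a PVMD. Write $D = \bigcap_{P \in F} D_P$, where $F$ is a set of divisorial prime ideals and the intersection is locally finite, i.e., every nonzero nonunit of $D$ lies in only finitely many $P \in F$. In particular, $D$ is an intersection of localizations at divisorial primes, so Theorem~\ref{A} applies; its proof identifies each $D_P$ with $P \in F$ as an essential discrete rank-one valuation domain, that is, a DVR. Thus $D$ is exhibited as the intersection $\bigcap_{P \in F} D_P$ of DVRs indexed by the \emph{same} family $F$, and this intersection is locally finite by hypothesis. I would then invoke the standard characterization of Krull domains: an integral domain that is a locally finite intersection of rank-one discrete valuation domains is a Krull domain. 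Applying this to $D = \bigcap_{P \in F} D_P$ yields that $D$ is Krull, completing the ``only if'' direction.

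The substantive content is concentrated entirely in Theorem~\ref{A}, which upgrades the divisorial-prime localizations $D_P$ to DVRs once $D[[X]]$ is a PVMD; granting that, the corollary is immediate from the finite-character hypothesis together with the DVR characterization of Krull domains. The only point requiring care is verifying that the local finiteness assumed for the divisorial-prime representation transfers verbatim to the DVR representation, which it does because Theorem~\ref{A} does not alter the index family $F$ but merely determines the nature of each $D_P$. The backward direction's sole external input is the preservation of the Krull property under passage to $D[[X]]$, a classical fact.
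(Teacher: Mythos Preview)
Your proof is correct and follows essentially the same route as the paper: invoke the proof of Theorem~\ref{A} to see that each $D_P$ with $P\in F$ is a discrete rank-one valuation domain, so the given locally finite representation becomes a locally finite intersection of DVRs and $D$ is Krull; conversely, $D$ Krull implies $D[[X]]$ Krull, hence a PVMD. Your explicit remark that the index family $F$ is unchanged (so local finiteness carries over automatically) is a nice clarification of a step the paper leaves implicit.
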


\begin{proof}
Suppose that $D[[X]]$ is a PVMD. By the proof of Theorem \ref{A}, $D$ is a locally finite
intersection of discrete rank-one valuation domains. Thus, $D$ is a
Krull domain. Conversely, if $D$ is a Krull domain, then $D[[X]]$
is a Krull domain \cite[Corollary 44.11]{gilmer}, and hence a PVMD.
\end{proof}

Of course, there is yet another way that the $v$-finite conductor condition
becomes available free of charge. Recall that an integral domain $D$ is
called an \emph{H-domain} if for every nonzero ideal $I$ of $D$, $I^{-1} = D$ implies that
there is a finitely generated ideal $F \subseteq I$ such that $F^{-1} = D$. It was
shown by Houston and Zafrullah \cite[Theorem 2.4]{HZ} that $D$ is an H-domain if and only
if every maximal $t$-ideal of $D$ is divisorial. \mbox{H-domains} were introduced
by Glaz and Vasconcelos in \cite{GV}, where it was shown that a completely
integrally closed \mbox{H-domain} is a Krull domain \cite[3.2d]{GV}. Indeed, a Krull domain is an
\mbox{H-domain}. In fact, a Krull-like PVMD is an H-domain since every maximal $t$-ideal
is \mbox{$t$-invertible} \cite[Corollary 3.6]{E}, and hence divisorial.
Thus, a Krull-like PVMD is a Krull domain if and only if it is completely integrally closed.
With this introduction, we state the following result.

\begin{corollary}  \label{K}
The following statements are equivalent for an H-domain $D$.

\emph{(1)} $D[[X]]$ is a PVMD.

\emph{(2)} $D$ is completely integrally closed.

\emph{(3)} $D$ is a Krull domain.

\emph{(4)} $D[[X]]$ is completely integrally closed.
\end{corollary}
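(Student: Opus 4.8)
The plan is to prove Corollary~\ref{K} by a cyclic chain of implications, leaning on the results already assembled in this section together with the structural facts about H-domains recorded just above the statement. The four conditions to be linked are: (1) $D[[X]]$ is a PVMD, (2) $D$ is completely integrally closed, (3) $D$ is a Krull domain, and (4) $D[[X]]$ is completely integrally closed. I would aim for the cycle $(1) \Rightarrow (3) \Rightarrow (2) \Rightarrow (4) \Rightarrow (2)$ supplemented by $(3) \Rightarrow (1)$, or more efficiently organize it as $(3) \Leftrightarrow (2)$ for H-domains together with the power-series equivalences, since $(2) \Leftrightarrow (4)$ is immediate.

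First I would establish $(1) \Rightarrow (3)$. Since $D$ is an H-domain, every maximal $t$-ideal of $D$ is divisorial by the Houston--Zafrullah result \cite[Theorem 2.4]{HZ}, and since $D = \bigcap_{M \in t\text{-Max}(D)} D_M$ is an intersection of localizations at these divisorial primes, $D$ satisfies the hypothesis of Theorem~\ref{A}. Hence if $D[[X]]$ is a PVMD, then $D$ is completely integrally closed by Theorem~\ref{A}, giving $(1) \Rightarrow (2)$ directly. Now a completely integrally closed H-domain is a Krull domain by the Glaz--Vasconcelos result \cite[3.2d]{GV} quoted above, yielding $(2) \Rightarrow (3)$. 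For $(3) \Rightarrow (1)$, a Krull domain $D$ has $D[[X]]$ again a Krull domain by \cite[Corollary 44.11]{gilmer}, and a Krull domain is a PVMD; this is exactly the argument already used in Corollary~\ref{H}. This closes the loop among (1), (2), (3).

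It remains to fold in condition (4). Here I would invoke the well-known fact, recorded in the Introduction, that $D[[X]]$ is completely integrally closed if and only if $D$ is completely integrally closed \cite[Theorem 13.9]{gilmer}; this gives $(2) \Leftrightarrow (4)$ outright, independently of the H-domain hypothesis. Assembling the pieces, $(1) \Rightarrow (2) \Rightarrow (3) \Rightarrow (1)$ and $(2) \Leftrightarrow (4)$ together show all four statements are equivalent.

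I do not expect any serious obstacle, since the corollary is essentially a bookkeeping exercise that packages Theorem~\ref{A}, the Krull/power-series transfer, and the Glaz--Vasconcelos characterization. The one point requiring a moment's care is verifying that an H-domain genuinely meets the hypothesis of Theorem~\ref{A}: the representation $D = \bigcap_{M \in t\text{-Max}(D)} D_M$ holds for any integral domain, but one must note that for an H-domain these maximal $t$-ideals are precisely the divisorial primes being localized at, so that the intersection is over divisorial prime ideals as Theorem~\ref{A} demands. Once that is observed, the remaining implications are direct citations.
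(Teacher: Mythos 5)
Your proof is correct and uses exactly the same ingredients as the paper's: Theorem~\ref{A} applied via the H-domain representation $D = \bigcap_{M \in t\text{-Max}(D)} D_M$ with divisorial maximal $t$-ideals, the Glaz--Vasconcelos result \cite[3.2d]{GV}, and the Krull/power-series and completely-integrally-closed/power-series transfer facts. The only difference is bookkeeping: the paper runs the single cycle $(1) \Rightarrow (2) \Rightarrow (3) \Rightarrow (4) \Rightarrow (1)$, while you close the triangle $(1) \Rightarrow (2) \Rightarrow (3) \Rightarrow (1)$ and attach $(2) \Leftrightarrow (4)$ separately — an equivalent arrangement of the same argument.
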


\begin{proof}
(1) $\Rightarrow$ (2) Being an H-domain, $D$ is an intersection of
localizations at divisorial prime ideals. By Theorem \ref{A}, $D[[X]]$ is
a PVMD implies that $D$ is completely integrally closed.

(2) $\Rightarrow$ (3) A completely
integrally closed H-domain is a Krull domain \cite[3.2d]{GV}.

(3) $\Rightarrow$ (4) $D$ is a Krull domain implies that $D[[X]]$ is a Krull domain \cite[Corollary 44.11]{gilmer}, and thus $D[[X]]$ is completely
integrally closed.

(4) $\Rightarrow$ (1) $D[[X]]$ is completely integrally
closed implies that $D$ is completely integrally closed, and a completely integrally
closed H-domain is a Krull domain \cite[3.2d]{GV}. Thus, $D[[X]]$ is a Krull domain, and hence a PVMD.
\end{proof}

\begin{corollary}  \label{K2}
Let $D$ be a Krull-like PVMD. Then $D[[X]]$ is a Krull-like PVMD if and only if $D$ is a Krull domain.
\end{corollary}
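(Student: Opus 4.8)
The plan is to reduce Corollary~\ref{K2} to Corollary~\ref{K}, exploiting two facts already in hand: that a Krull-like PVMD is an H-domain (noted in the paragraph preceding Corollary~\ref{K}, since each maximal $t$-ideal of a Krull-like PVMD is $t$-invertible by \cite[Corollary 3.6]{E}, hence divisorial), and that a Krull-like PVMD is in particular a PVMD. This makes the forward implication essentially immediate.

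For the forward direction, suppose $D[[X]]$ is a Krull-like PVMD. Since $D$ is assumed to be a Krull-like PVMD, $D$ is an H-domain, so the equivalences of Corollary~\ref{K} are available. A Krull-like PVMD is a PVMD, so $D[[X]]$ is a PVMD; that is, statement (1) of Corollary~\ref{K} holds. Applying the implication (1)$\Rightarrow$(3) of that corollary yields that $D$ is a Krull domain. No further work is needed here.

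For the reverse direction, suppose $D$ is a Krull domain. Then $D[[X]]$ is a Krull domain by \cite[Corollary 44.11]{gilmer}, and it remains only to observe that a Krull domain satisfies El Baghdadi's definition of a Krull-like PVMD. Indeed, a Krull domain is a PVMD whose localization $D_M$ at each maximal $t$-ideal $M$ is a discrete rank-one valuation domain; such a valuation domain is strongly discrete, since its unique nonzero prime $P$ satisfies $P \neq P^2$. Moreover, every nonzero ideal of a Krull domain, in particular every principal ideal, has only finitely many minimal primes. Thus $D[[X]]$, being Krull, is a strongly discrete PVMD in which every principal ideal has only finitely many minimal primes, i.e., a Krull-like PVMD.

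There is no serious obstacle in this argument: the forward direction is a direct appeal to Corollary~\ref{K}, and the only point requiring verification is the standard observation that a Krull domain is a Krull-like PVMD, which amounts to recalling that the localizations of a Krull domain at its maximal $t$-ideals are discrete rank-one (hence strongly discrete) valuation domains together with the finite-character property of principal ideals in Krull domains.
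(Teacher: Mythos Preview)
Your proof is correct and follows essentially the same approach as the paper: both reduce to Corollary~\ref{K} via the observation that a Krull-like PVMD is an H-domain. You simply spell out the reverse direction (that a Krull domain is a Krull-like PVMD) more explicitly than the paper, which leaves this implicit in the phrase ``follows directly from Corollary~\ref{K}.''
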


\begin{proof}
We have already observed that a a Krull-like PVMD is an H-domain. The corollary now follows directly from Corollary~\ref{K}.
\end{proof}

Corollary \ref{K2} shows that the answer to the El Baghdadi-Kim question is, generally no.
Specifically, let $D$ be a Krull-like PVMD that is not a Krull domain (e.g., $\mathbb{Z} + Y\mathbb{Q}[Y]$ as in Example~\ref{F},
or a strongly discrete rank-two valuation domain).
Then $D[[X]]$ is a not a Krull-like PVMD.
(We are thankful to
Said El-Baghdadi for support in the form of advice and references for this
section.)

%%%%%%%%%%%%%%%%%%%%%%%%%%%%%%%%%%%%%%%%%%%%%%%%%%%
%%%%%%%%%%%%%%%%%%%%%%%%%%%%%%%%%%%% %%%%%%%%%%%%%%%
%%%%%%%%%%%%%%%%%%             D[[X]] Integrally closed for D a PVMD          %%%%%%%%%%%%%%%%%%
%%%%%%%%%%%%%%%%%%%%%%%%%%%%%%%%%%%%%%%%%%%%%%%%%%%
%%%%%%%%%%%%%%%%%%%%%%%%%%%%%%%%%%%%%%%%%%%%%%%%%%%

\section{When $D[[X]]$ is integrally closed for $D$ a PVMD}

\label{s:2}

%%%%%%%%%%%%%%%%%%%%%%%%%%%%%%%%%%%%%%%%%%%%%%%%%%%
%%%%%%%%%%%%%%%%              D[[X]] D PVMD          %%%%%%%%%%%%%%%%%
%%%%%%%%%%%%%%%%%%%%%%%%%%%%%%%%%%%%%%%%%%%%%%%%%%%

An integral domain $D$ is \emph{Archimedean} if
$\bigcap _{n = 1}^{\infty}(x^{n}) = (0)$ for every nonunit $x \in D$.
According to \cite[Theorem 0.1]{Ohm} (or \cite[Theorem 13.10 and Proposition 13.11]{gilmer}), if $D[[X]]$ is integrally closed, then
$D$ is integrally closed and Archimedean. Although a completely integrally closed domain is
Archimedean \cite[Corollary 13.4]{gilmer} (cf. Corollary~\ref{L21}),
an Archimedean domain need not be
completely integrally closed since any one-dimensional domain, Noetherian domain,
or more generally, an integral domain satisfying ACCP is Archimedean. However,
we do not know of an example of a PVMD, let alone a Pr\"{u}fer domain, $D$ such that $D[[X]]$ is integrally closed,
but $D$ is not completely integrally closed, or such that $D$ is Archimedean, but not completely integrally closed. So there
seems to be no harm in putting forward a ``lame'' conjecture that if $D$ is a
PVMD such that $D[[X]]$ is integrally closed, then $D$ is completely
integrally closed, which is implied by our second ``lame'' conjecture that a PVMD $D$ is completely
integrally closed if and only if $D$ is Archimedean. These conjectures are ``lame''
in that there is very little hope of them being true. Yet, there is every hope
of generating interest in producing counterexamples to them.

\begin{conjecture}  \label{L}
Let $D$ be a PVMD. Then $D[[X]]$ is integrally closed if and only if $D[[X]]$
is completely integrally closed (if and only if $D$ is completely integrally closed).
\end{conjecture}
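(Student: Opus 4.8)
The plan is to prove the statement as a chain in which a single implication carries all the content. Of the three conditions, two equivalences are essentially free: that $D[[X]]$ is completely integrally closed if and only if $D$ is completely integrally closed is Gilmer's result \cite[Theorem 13.9]{gilmer} recorded in the Introduction, and that $D[[X]]$ completely integrally closed forces $D[[X]]$ integrally closed is immediate, since a completely integrally closed domain is integrally closed. Thus everything reduces to the one nontrivial implication: if $D$ is a PVMD and $D[[X]]$ is integrally closed, then $D$ is completely integrally closed.

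To attack this, I would first invoke Ohm's theorem \cite[Theorem 0.1]{Ohm}: if $D[[X]]$ is integrally closed, then $D$ is integrally closed and Archimedean. Since $D$ is already a PVMD, the problem becomes the purely ideal-theoretic assertion that an Archimedean PVMD is completely integrally closed. Here Theorem~\ref{B} converts the goal into showing $\bigcap_{n=1}^{\infty}(I^{n})_{v}=(0)$ for every proper $t$-invertible $t$-ideal $I$ of $D$. To analyze these intersections I would pass to the essential valuation localizations. On a PVMD one has $t=w$ by Kang's theorem \cite{Kan}, and $I=J_t=J_w$ for some $J\in f(D)$, so $ID_M=JD_M=a_M D_M$ is principal in each valuation domain $D_M$ with $M\in t\text{-Max}(D)$. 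Since $(I^{n})_{v}=(I^{n})_{w}=\bigcap_{M}I^{n}D_M$, this yields $\bigcap_{n}(I^{n})_{v}=\bigcap_{M}\bigcap_{n}a_M^{\,n}D_M$, reducing the question to controlling $\bigcap_{n}a^{n}V$ inside each valuation localization $V=D_M$.

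The case I can complete is when $Cl_t(D)$ is torsion. Then some power $I^{k}$ is principal, say $(I^{k})_{v}=(x)$; here $x$ is a nonunit because $I$ is a proper $t$-invertible $t$-ideal and hence divisorial, so $(I^{k})_{v}\subseteq I_v=I\subsetneq D$. Using $(A^{m})_{v}=((A_v)^{m})_{v}$ with $A=I^{k}$ gives $(I^{km})_{v}=(x^{m})$ for all $m\geq 1$, whence $\bigcap_{n}(I^{n})_{v}\subseteq\bigcap_{m}(I^{km})_{v}=\bigcap_{m}(x^{m})=(0)$ by the Archimedean hypothesis. This recovers the class of PVMDs with torsion $t$-class group promised in the Introduction.

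The hard part will be the general case, where $I$ may have infinite order in $Cl_t(D)$. The Archimedean hypothesis is a statement about principal ideals, equivalently about the rank-one localizations; but a $t$-invertible $t$-ideal of infinite order never becomes principal, so there is no evident mechanism to transfer the element-wise Archimedean condition to the global intersection $\bigcap_{M}\bigcap_{n}a_M^{\,n}D_M$. The obstruction is sharpest precisely when the essential valuation domains $D_M$ have rank exceeding one, since then $\bigcap_{n}a^{n}D_M$ can be nonzero for a nonunit $a$, and the local contributions would have to cancel globally for the conclusion to hold. I do not see how to force this cancellation from the Archimedean hypothesis alone, which is exactly why the statement is flagged as a conjecture for which ``there is very little hope'' of a proof: a complete argument would require a genuinely new way to relate $t$-invertible $t$-ideals of infinite order to the Archimedean behavior of elements, or, more plausibly, a counterexample.
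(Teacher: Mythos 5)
The statement you were asked to prove is labeled a \emph{conjecture} in the paper: the authors offer no proof of it, explicitly call it ``lame,'' and say there is very little hope of it being true in general. So there is no proof in the paper to compare against, and your decision not to claim a complete proof is the correct one. What you did produce matches the paper's own supporting material almost exactly. Your reduction --- strip away the two easy equivalences ($D[[X]]$ completely integrally closed iff $D$ is, by \cite[Theorem 13.9]{gilmer}, and completely integrally closed implies integrally closed), then apply Ohm's theorem to reduce everything to ``Archimedean PVMD implies completely integrally closed'' --- is precisely the paper's observation that Conjecture~\ref{L1} implies Conjecture~\ref{L}. Your torsion $t$-class group case is correct: choosing $k$ with $(I^k)_v = (x)$ principal, noting $x$ is a nonunit since $(I^k)_v \subseteq I \subsetneq D$, and using $(I^{km})_v = (x^m)$ together with $\bigcap_n (I^n)_v \subseteq \bigcap_m (I^{km})_v = \bigcap_m (x^m) = (0)$ and Theorem~\ref{B} is a slightly more direct variant of the paper's route through Proposition~\ref{Q} and the AGCD characterization (where principality of $(a_1^k,\ldots,a_s^k)_v$ plays the role of your $(I^k)_v = (x)$); the conclusion is the paper's Corollary~\ref{R} and Proposition~\ref{S}. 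Finally, the obstruction you identify --- that the Archimedean condition lives on principal ideals while a $t$-invertible $t$-ideal of infinite order in $Cl_t(D)$ never links to one, especially when the essential valuation domains $D_M$ have rank exceeding one --- is exactly the gap the authors acknowledge; their further partial results (Propositions~\ref{Q} and~\ref{S2}, Corollaries~\ref{R0} and~\ref{R00}) are all devices for forcing such a link under extra hypotheses. In short: everything you assert is correct, the provable part coincides with the paper's approach, and the part you leave open is genuinely open.
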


\begin{conjecture}  \label{L1}
Let $D$ be a PVMD. Then $D$ is completely integrally closed if and only if $D$ is Archimedean.
\end{conjecture}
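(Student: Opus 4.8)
The plan is to attack the two implications separately: the forward one is routine and needs no PVMD hypothesis, while the reverse one is the substantive (and, as signalled in the preceding discussion, doubtful) direction. For the forward implication I would simply record that a completely integrally closed domain is Archimedean, which is \cite[Corollary 13.4]{gilmer}. Explicitly, if $x$ is a nonunit and $0 \neq r \in \bigcap_{n=1}^{\infty}(x^{n})$, write $r = x^{n}s_{n}$ with $s_{n} \in D$; then $r(1/x)^{n} = s_{n} \in D$ for every $n \geq 1$, so $1/x$ is almost integral over $D$ and hence lies in $D$, forcing $x$ to be a unit, a contradiction. Thus $\bigcap_{n=1}^{\infty}(x^{n}) = (0)$.

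For the reverse implication I would route everything through Theorem~\ref{B}: since $D$ is a PVMD, it is completely integrally closed if and only if $\bigcap_{n=1}^{\infty}(I^{n})_{v} = (0)$ for every proper $t$-invertible $t$-ideal $I$. So the problem reduces to showing that Archimedeanness alone forces this intersection to vanish. The natural first move is to pass to the valuation localizations. By Kang's theorem $t = w$ on $D$, so for a $t$-invertible (hence $t$-finite) ideal $I$ one has $(I^{n})_{v} = (I^{n})_{w} = \bigcap_{M \in t\text{-Max}(D)} I^{n}D_{M}$, whence
\[
\bigcap_{n=1}^{\infty}(I^{n})_{v} \;=\; \bigcap_{M \in t\text{-Max}(D)} \ \bigcap_{n=1}^{\infty} I^{n}D_{M}.
\]
By Griffin's theorem each $D_{M}$ is a valuation domain and $ID_{M}$ is principal there, say $ID_{M} = t_{M}D_{M}$, so the inner intersection is $\bigcap_{n}t_{M}^{n}D_{M}$, the intersection of the powers of a principal ideal in a valuation domain.

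This is exactly where the difficulty concentrates, and it is the step I expect to be the true obstacle. The hypothesis ``$D$ is Archimedean'' controls only \emph{globally} principal ideals $(x)$, whereas the generators $t_{M}$ above are merely \emph{locally} principal and in general do not patch to a single element of $K$. A valuation domain $D_{M}$ of rank $\geq 2$ is not Archimedean, so $\bigcap_{n}t_{M}^{n}D_{M}$ may be a nonzero (prime) ideal of $D_{M}$, and I see no mechanism by which global Archimedeanness of $D$ alone compels the intersection of these local pieces over all $M$ to collapse to $(0)$. In effect the implication asks a condition imposed on principal ideals to propagate to all $t$-invertible $t$-ideals, and nothing in the bare PVMD structure supplies that propagation; this is presumably why the statement is flagged as ``lame.''

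The one regime in which I would expect the argument to close is when the reduction to principal ideals is available for free, namely when $Cl_{t}(D)$ is torsion (equivalently, when $D$ is an integrally closed AGCD domain). There, given a proper $t$-invertible $t$-ideal $I$, some power has $(I^{k})_{t} = (a)$ with $a$ a nonzero nonunit (nonunit because $I$, being a proper $t$-ideal, lies in a maximal $t$-ideal $M$, so $(a) = (I^{k})_{t} \subseteq M \neq D$). Using $(I^{km})_{v} = \bigl((I^{k})_{t}^{\,m}\bigr)_{t} = (a^{m})$ for all $m \geq 1$, I get
\[
\bigcap_{n=1}^{\infty}(I^{n})_{v} \ \subseteq\ \bigcap_{m=1}^{\infty}(I^{km})_{v} \;=\; \bigcap_{m=1}^{\infty}(a^{m}) \;=\; (0)
\]
by Archimedeanness, so Theorem~\ref{B} delivers complete integral closure. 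Thus my proposal proves the conjecture outright for PVMDs with torsion $t$-class group and pins the obstruction in the general case precisely on the failure of locally principal generators to be globally principal.
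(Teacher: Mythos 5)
First, a point of order: the statement you were given is Conjecture~\ref{L1} of the paper, not a theorem. The authors offer no proof of it; they explicitly call it a ``lame'' conjecture, expect counterexamples rather than a proof, and devote Section~\ref{s:2} to verifying it only in special cases. So there is no proof in the paper to measure your attempt against, and your proposal is right not to claim one. What can be assessed is the partial work, and that is all correct: the forward direction (completely integrally closed implies Archimedean) is the paper's Corollary~\ref{N} (or Corollary~\ref{L21}); the reduction of the converse, via Theorem~\ref{B}, to showing $\bigcap_{n=1}^{\infty}(I^{n})_{v}=(0)$ for every proper $t$-invertible $t$-ideal $I$ is exactly the strategy the paper itself uses in every special case; and your localization formula $\bigcap_{n=1}^{\infty}(I^{n})_{v}=\bigcap_{M\in t\text{-Max}(D)}\bigcap_{n=1}^{\infty}I^{n}D_{M}$ is a legitimate use of $t=w$ on a PVMD together with the $v$-finiteness of $t$-invertible $t$-ideals.

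Second, your one completed case --- torsion $t$-class group --- is precisely the paper's Corollary~\ref{R}, since a PVMD with torsion $t$-class group is the same thing as an integrally closed AGCD domain. Your mechanism is also the paper's: in Proposition~\ref{Q} the hypothesis produces an integer $k\geq 1$ and a nonunit $d$ with $(I^{k})_{v}\subseteq(d)$, and then Lemma~\ref{T}(2) plus Archimedeanness gives $\bigcap_{n=1}^{\infty}(I^{n})_{v}\subseteq\bigcap_{n=1}^{\infty}(d^{n})=(0)$, with Theorem~\ref{B} closing the argument; your $(I^{k})_{t}=(a)$ plays exactly the role of $(I^{k})_{v}\subseteq(d)$. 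Your diagnosis of the obstruction in the general case --- Archimedeanness constrains only globally principal ideals, while $ID_{M}$ is merely locally principal, and a valuation localization of rank at least two can swallow the intersection --- is also the correct picture of why the problem is hard: every positive result in the paper (Corollaries~\ref{R0}, \ref{R00}, \ref{R} and Proposition~\ref{S2}) imports precisely the hypothesis you identify as missing, namely that the relevant $t$-invertible ideals are dominated by proper principal ideals, or that the complete integral closure $D''$ is a quotient ring of $D$. So nothing in your proposal is wrong, but it is not a proof of the statement as posed --- and, as of this paper, no such proof (or counterexample) exists.
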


These conjectures certainly hold for valuation domains and are somewhat supported by the fact, that will soon become
apparent, that if $D$ is a GCD domain (in fact, an AGCD domain) and $D[[X]]$ is integrally closed, then
$D$ must be completely integrally closed. This follows from the fact that a
GCD domain $D$ is completely integrally closed if and only if $\bigcap_{n = 1}^{\infty}(x^{n}) = (0)$
for every nonunit $x \in D$ (GCD domains satisfy Conjecture \ref{L1}, and hence Conjecture \ref{L}; see Corollary~\ref{P}(1)).

We next define an ``Archimedean-like" condition that \emph{is} equivalent to being completely integrally closed.
We say that an integral domain $D$ is \emph{strongly Archimedean}
if $\bigcap_{n = 1}^{\infty}(a/b)^n = (0)$ for every $a, b \in D$ with $(b) \nsubseteq (a)$.
A strongly Archimedean domain is certainly Archimedean. However, a Noetherian domain is always Archimedean,
but is strongly Archimedean if and only if it is (completely) integrally closed.
Related ``Archimedean-like'' conditions will be studied in Section~\ref{s:5}.

\begin{proposition} \label{L2}
An integral domain $D$ is strongly Archimedean if and only if $D$ is completely integrally closed.
\end{proposition}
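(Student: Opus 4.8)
The plan is to prove both implications directly from the element-wise definition of complete integral closure recalled earlier, namely that $D$ is completely integrally closed if and only if $x \in D$ whenever $x \in K$, $0 \neq r \in D$, and $rx^n \in D$ for every integer $n \geq 1$. The bridge between the two conditions is the observation that, for $a, b \in D$, the hypothesis $(b) \nsubseteq (a)$ is exactly the statement $b/a \notin D$, and that membership of an element in $\bigcap_{n=1}^{\infty}(a/b)^n$ translates into a family of integrality relations involving the element $b/a$. Everything will then reduce to matching these fractional-ideal containments with the element-wise criterion.

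First I would prove that complete integral closure implies strongly Archimedean. Fix $a, b \in D$ with $(b) \nsubseteq (a)$ and suppose, for contradiction, that $\bigcap_{n=1}^{\infty}(a/b)^n \neq (0)$, so there is a nonzero $d \in K$ lying in $(a/b)^n D = (a^n/b^n)D$ for every $n$. Then $d(b/a)^n \in D$ for all $n \geq 1$. Writing $d = p/q$ with $p, q \in D$ and $p \neq 0$, and multiplying the relation $d(b/a)^n \in D$ by $q$, I obtain $p(b/a)^n \in D$ for all $n \geq 1$. Since $p \neq 0$, complete integral closure forces $b/a \in D$, i.e. $(b) \subseteq (a)$, a contradiction. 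Hence the intersection is $(0)$ and $D$ is strongly Archimedean.

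For the converse, assume $D$ is strongly Archimedean and take $x \in K$, $0 \neq r \in D$ with $rx^n \in D$ for all $n \geq 1$; I must show $x \in D$. Suppose not, and write $x = a/b$ with $a, b \in D$ (necessarily $a, b \neq 0$, since otherwise $x = 0 \in D$). Then $x \notin D$ means $(a) \nsubseteq (b)$. From $rx^n \in D$ I get $r \in x^{-n}D = (b/a)^n D$ for every $n$, hence $r \in \bigcap_{n=1}^{\infty}(b/a)^n$. Applying the strongly Archimedean hypothesis to the pair $(b,a)$, whose relevant containment condition is precisely $(a) \nsubseteq (b)$, yields $\bigcap_{n=1}^{\infty}(b/a)^n = (0)$, contradicting the fact that $0 \neq r$ lies in it. Therefore $x \in D$, and $D$ is completely integrally closed.

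The only real subtlety, and the step I would watch most carefully, is the denominator-clearing in the first implication: an element of the fractional-ideal intersection $\bigcap_{n}(a/b)^n$ lives a priori in $K$ rather than in $D$, so one must multiply by a suitable denominator to manufacture the nonzero element $r = p \in D$ demanded by the element-wise criterion. Beyond that, the argument is essentially bookkeeping: one must keep the numerator/denominator roles straight so that the containment hypothesis $(b) \nsubseteq (a)$ from the definition of strongly Archimedean aligns with the fractional ideal that actually appears in each application.
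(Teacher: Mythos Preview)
Your proof is correct and follows essentially the same approach as the paper: both hinge on the observation that $\bigcap_{n=1}^{\infty}(a/b)^n \neq (0)$ is equivalent to $b/a$ lying in the complete integral closure $D''$, while $(b) \subseteq (a)$ is equivalent to $b/a \in D$. The paper states this equivalence in one line and reads off the result, whereas you unpack the two implications separately and make the denominator-clearing step explicit; the content is the same.
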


\begin{proof}
Let $0 \neq a, b \in D$. Then $\bigcap_{n = 1}^{\infty}(a/b)^n \neq (0)$ if and only $b/a \in D''$,
the complete integral closure of $D$, and $(b) \subseteq (a)$ if and only if $b/a \in D$.
Thus, $D$ is  strongly Archimedean if and only if $D$ is completely integrally closed.
\end{proof}

\begin{corollary} \label{L21}
Let $D$ be a completely integrally closed domain.
Then $\bigcap_{n = 1}^{\infty}(I^n)_v = (0)$ for every ideal $I$ of $D$ with $I_v \subsetneq D$.
In particular, a completely integrally closed domain is Archimedean.
\end{corollary}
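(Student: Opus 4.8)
The plan is to prove the displayed assertion by contradiction, extracting from a hypothetical nonzero element of the intersection a nontrivial witness to the failure of complete integral closure. First I would unpack the hypothesis $I_v \subsetneq D$ into a usable ``multiplier.'' Since $I$ is an integral ideal we always have $I^{-1} \supseteq D$, and from the standard identity $I^{-1} = (I_v)^{-1}$ one sees that $I^{-1} = D$ would force $I_v = (I^{-1})^{-1} = D$. Hence $I_v \subsetneq D$ gives $I^{-1} \supsetneq D$, so I may choose an element $t \in I^{-1} \setminus D$; by construction $t \in K$, $t \notin D$, and $tI \subseteq D$.

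Next I would push $t$ through all powers of $I$. From $tI \subseteq D$ we get $t^n I^n = (tI)^n \subseteq D$, that is, $t^n \in (I^n)^{-1}$. Combining this with the general inclusion $(I^n)_v\,(I^n)^{-1} \subseteq D$ (valid because $(I^n)^{-1} = ((I^n)_v)^{-1}$ and $(I^n)_v$ pairs into $D$ against its inverse), I obtain $t^n\,(I^n)_v \subseteq D$ for every $n \ge 1$. This is the key technical step: it is $(I^n)_v$, not merely $I^n$, that $t^n$ carries into $D$.

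Now for the heart of the argument: suppose $0 \neq x \in \bigcap_{n=1}^{\infty}(I^n)_v$. Taking $n = 1$ shows $x \in I_v \subseteq D$, so $x$ is a nonzero element of $D$, and the previous step yields $x\,t^n \in D$ for all $n \ge 1$. This is precisely the hypothesis in the definition of complete integral closure with the fixed multiplier $r = x$, so $t \in D$, contradicting $t \notin D$. Hence $\bigcap_{n=1}^{\infty}(I^n)_v = (0)$. Finally, the ``in particular'' clause follows by specializing to principal ideals: for a nonunit $x \in D$ the ideal $I = (x)$ is divisorial with $I_v = (x) \subsetneq D$, and $(I^n)_v = (x^n)_v = (x^n)$, so the main assertion gives $\bigcap_{n=1}^{\infty}(x^n) = (0)$, which is exactly the Archimedean condition.

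I expect no deep obstacle here; the only points requiring care are the bookkeeping with $v$-closures—justifying $I^{-1} \supsetneq D$ from $I_v \subsetneq D$, and confirming that $t^n$ annihilates all of $(I^n)_v$ into $D$—after which the elementary definition of completely integrally closed (rather than the fractional-ideal characterization $(JJ^{-1})_v = D$) closes the argument at once.
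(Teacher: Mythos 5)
Your proof is correct and is essentially the paper's own argument in different clothing: your element $t \in I^{-1} \setminus D$ is exactly the paper's principal fractional ideal $(a/b) \supseteq I$ with $(b) \nsubseteq (a)$ (take $t = b/a$), and your key step $t^n (I^n)_v \subseteq D$ is the paper's containment $(I^n)_v \subseteq (a/b)^n$. The only difference is that the paper concludes by citing Proposition~\ref{L2} (completely integrally closed $\Leftrightarrow$ strongly Archimedean), whereas you inline that short argument by applying the multiplier definition of complete integral closure directly to a hypothetical nonzero element of $\bigcap_{n=1}^{\infty}(I^n)_v$.
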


\begin{proof}
Since $I_v$ is a proper divisorial ideal of $D$, $I \subseteq (a/b)$ for some $a, b \in D$ with $(b) \nsubseteq (a)$.
Thus, $(I^n)_v \subseteq (a/b)^n$ for every integer $n \geq 1$.
Hence, $\bigcap_{n = 1}^{\infty}(I^n)_v \subseteq \bigcap_{n = 1}^{\infty}(a/b)^n = (0)$
since a completely integrally closed domain is strongly Archimedean.
The ``in particular'' statement is clear.
\end{proof}

The following result sets the stage for a possible resolution of Conjecture
\ref{L1}, at least in some special cases. For better reading, however, we
include some explanation of the notions mentioned in the proposition that
follows.

A nonempty family $S$ of nonzero ideals of an integral domain $D$ is said to be a
\emph{multiplicative system of ideals} if $IJ \in S$ for every $I, J \in S$. If $S$
is a multiplicative system of ideals, then the set of ideals of $D$ each containing some ideal
of $S$ is still a multiplicative system, which is called the \emph{saturation of $S$},
and is denoted by $Sat(S)$. A multiplicative system $S$ is said to be
\emph{saturated} if $S = Sat(S)$. If $S$ is a multiplicative system of ideals, then the
overring $D_{S} = \bigcup\{ \, (D :_K J) \mid J\in S \, \}$ of $D$ is called the \emph{generalized
ring of fractions} (or \emph{generalized transform}) \emph{of $D$ with respect to $S$}. Indeed, $D_{S} = D_{Sat(S)}$, and
note that $\{ \, J_{v} \mid J \in S\,  \} \subseteq Sat(S)$.

\begin{proposition} \label{M}
Let $D$ be a PVMD. Then the complete integral closure $D''$ of $D$
is the generalized ring of fractions $D_S$, where $S = \{ \,I \mid I \subseteq D$ is $t$-invertible
and $\bigcap_{n = 1}^{\infty}(I^{n})_{v} \neq (0) \, \}$ is a multiplicative system of
ideals.
\end{proposition}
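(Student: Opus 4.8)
The plan is to establish the two assertions in turn: first that $S$ is a multiplicative system of ideals, and then the equality $D''=D_S$ by proving each inclusion. For the multiplicative system claim, note $S\neq\emptyset$ since $D\in S$ ($D$ is $t$-invertible and $\bigcap_n(D^n)_v=D\neq(0)$). Given $I,J\in S$, the product $IJ\subseteq D$ is $t$-invertible because $((IJ)(I^{-1}J^{-1}))_t=((II^{-1})_t(JJ^{-1})_t)_t=D$, and to see $\bigcap_n((IJ)^n)_v\neq(0)$ I would pick $0\neq a\in\bigcap_n(I^n)_v$ and $0\neq b\in\bigcap_n(J^n)_v$; then $ab\in(I^n)_v(J^n)_v\subseteq(I^nJ^n)_v=((IJ)^n)_v$ for every $n$ by the identity $(A^{\ast}B^{\ast})^{\ast}=(AB)^{\ast}$. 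Hence $IJ\in S$.

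For the inclusion $D_S\subseteq D''$, take $x\in D_S$, which we may assume nonzero, so that $xJ\subseteq D$ for some $J\in S$. Then $x^nJ^n\subseteq D$, whence $J^n\subseteq x^{-n}D$ and $(J^n)_v\subseteq x^{-n}D$ for every $n$. Choosing $0\neq c\in\bigcap_n(J^n)_v$ yields $cx^n\in D$ for all $n\geq 1$; since $c\in J_v\subseteq D$, this exhibits $x$ as almost integral over $D$, so $x\in D''$.

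For the reverse inclusion $D''\subseteq D_S$, take $0\neq x\in D''$, write $x=b/a$ with $a,b\in D$, and set $J_0=(a/b)D\cap D=b^{-1}(aD\cap bD)$, an integral ideal of $D$. Since $J_0\subseteq(a/b)D$, one checks $xJ_0=(b/a)J_0\subseteq D$, i.e.\ $x\in J_0^{-1}$, so it suffices to prove $J_0\in S$. As $D$ is a PVMD, $H:=aD\cap bD$ is a $t$-invertible $t$-ideal of finite type, so $J_0=b^{-1}H$ is $t$-invertible. The remaining point is $\bigcap_n(J_0^n)_v\neq(0)$: since $(J_0^n)_v=b^{-n}(H^n)_v$, I must locate a nonzero element of each $(H^n)_v$. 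Choosing $0\neq r\in D$ with $rx^n\in D$ (equivalently $rb^n\in a^nD$) for all $n$, and noting $rb^n\in b^nD$ as well, it remains only to show $rb^n\in(H^n)_v$, which reduces to the identity $((aD\cap bD)^n)_t=a^nD\cap b^nD$.

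The main obstacle is exactly this identity, which fails for general domains but holds in a PVMD, and I would prove it by localizing at the maximal $t$-ideals. Both sides are $t$-invertible $t$-ideals of finite type (the right-hand side because $D$ is a PVMD), hence $w$-ideals, hence equal to the intersection of their localizations $\bigcap_M(\,\cdot\,)D_M$ over maximal $t$-ideals $M$, using $t=w$ and $I_wD_M=ID_M$ on a PVMD. At each such $M$, $D_M$ is a valuation domain, where $aD_M$ and $bD_M$ are comparable; if say $aD_M\subseteq bD_M$, a direct computation gives $(aD\cap bD)^nD_M=(aD_M)^n=a^nD_M=a^nD_M\cap b^nD_M$, so both sides localize identically and the identity follows. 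With it in hand, $rb^n\in a^nD\cap b^nD=(H^n)_v$ gives $r\in\bigcap_n(J_0^n)_v$, so $J_0\in S$ and $x\in J_0^{-1}\subseteq D_S$, completing the argument.
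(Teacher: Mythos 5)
Your proposal is correct and follows essentially the same route as the paper's proof: the same three steps (closure of $S$ under products, $D_S \subseteq D''$ via a common nonzero multiplier, and $D'' \subseteq D_S$ via the conductor ideal), with the key point in both being a localization argument at maximal $t$-ideals using that each $D_M$ is a valuation domain. The only difference is cosmetic: the paper phrases the crucial identity as $(b^n):(a^n) = (((b):(a))^n)_v$ using the $w$-operation, while you phrase it as $((aD\cap bD)^n)_t = a^nD\cap b^nD$; since $(b):(a) = a^{-1}(aD\cap bD)$, these are the same identity up to multiplication by a principal fractional ideal.
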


\begin{proof}
We first show that $S$ is a multiplicative system of ideals. Let $I, J \in S$, and let
$0 \neq x \in \bigcap_{n = 1}^{\infty}(I^{n})_{v}$ and $0 \neq y \in \bigcap_{n = 1}^{\infty}(J^{n})_{v}$.
Then $IJ$ is $t$-invertible and $0 \neq xy \in (I^n)_v(J^n)_v \subseteq ((I^n)_v(J^n)_v))_v = (I^nJ^n)_v = ((IJ)^n)_v$
for every integer $n \geq 1$.
Thus, $\bigcap_{n = 1}^{\infty}((IJ)^{n})_{v} \neq (0)$; so $IJ \in S$.

We now show that $D'' = D_S$.  Let $x \in D_{S}$. Then $xI \subseteq D$ for some $I \in S$;
so $x^n(I^n)_v = (x^nI^n)_v = ((xI)^n)_v \subseteq D$ for every integer $n \geq 1$.
Let $0 \neq d \in \bigcap_{n = 1}^{\infty}(I^{n})_{v}$.
Then $dx^{n} \in D$ for every integer $n \geq 1$, and so $x \in D''$. Thus,
$D_{S} \subseteq D''$. For the reverse inclusion, suppose that $a/b \in D''$ for $0 \neq a, b \in D$. Then,
there is a $0 \neq d \in D$ such that $d(a/b)^{n} \in D$ for every integer
$n \geq 1$. Hence, $d \in (b^{n}) : (a^{n})$ for every integer $n \geq 1$. Since $D$ is a
PVMD and $(b^{n}):(a^{n})$ is divisorial, we have $
(b^{n}):(a^{n}) = ((b^{n}):(a^{n}))_{w} = \bigcap_{M \in t\text{-Max}
(D)} ((b^{n}):(a^{n}))D_{M} = \bigcap_{M \in t\text{-Max}
(D)} ((b):(a))^{n}D_{M} = (((b):(a))^{n})_w$ for every integer $n \geq 1$ because $D_{M}$ is a valuation
domain for every $M \in$ $t$-Max$(D)$. Thus,
$(b^{n}):(a^{n})=(((b):(a))^{n})_{v}$ for every integer $n \geq 1$ because $(b^{n}):(a^{n})$ is divisorial
and $w \leq v$. Hence, $0 \neq d \in \bigcap_{n = 1}^{\infty}((b^n) : (a^n)) = \bigcap_{n = 1}^{\infty}(((b):(a))^{n})_{v}$.
Thus, $I = (b):(a) \in S$ because $(b):(a)$ is $t$-invertible and
$0 \neq d \in \bigcap_{n = 1}^{\infty}(I^{n})_{v}$.
Hence, $a/b \in D_{S}$ because $(a/b)I = (a/b)((b):(a)) \subseteq
D$; so $D'' \subseteq D_S$. Thus,  $D'' = D_S$.
\end{proof}

We can now give the proof of Theorem~\ref{B} from the Introduction.

\begin{proof}
(of Theorem \ref{B}) Let $D$ be a completely integrally closed PVMD.
Then, by Corollary~\ref{L21}, $\bigcap_{n = 1}^{\infty}(I^{n})_{v} = (0)$
for every proper divisorial ideal $I$ of $D$, and thus $\bigcap_{n = 1}^{\infty}(I^{n})_{v} = (0)$
for every proper $t$-invertible $t$-ideal $I$ of $D$. Alternatively, one can use Proposition~\ref{M}.
Conversely, suppose that $D$ is a PVMD with $\bigcap_{n = 1}^{\infty}(I^{n})_{v} = (0)$
for every proper $t$-invertible $t$-ideal $I$ of $D$.
Then $S = \{D\}$; so $D = D_S = D''$ by Proposition~\ref{M}. Thus,
$D$ is completely integrally closed.
\end{proof}

Recall that an integral domain $D$ is a \emph{generalized GCD} (\emph{GGCD}) \emph{domain} if every finite type $v$-ideal of $D$ is
invertible. GGCD domains were studied in \cite{AA}, where it was shown that
the complete integral closure of a GGCD domain is an invertible
generalized transform. Proposition \ref{M} is an extension of that result. Because
a GGCD domain is a PVMD in which every $t$-invertible $t$-ideal is actually
invertible, \cite[Theorem 5]{AA} and its corollary \cite[Corollary 3]{AA} become special cases of
Proposition~\ref{M} and Theorem~\ref{B}, respectively. Of course, the next corollary is true
for any completely integrally closed integral domain.

\begin{corollary}  \label{N}
If a PVMD $D$ is completely integrally closed, then $D$ is Archimedean.
\end{corollary}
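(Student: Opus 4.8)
The plan is to observe that Corollary~\ref{N} is an immediate consequence of Corollary~\ref{L21}, and indeed that the PVMD hypothesis plays no essential role here (as the remark preceding the statement already anticipates). The substantive content has already been established: Proposition~\ref{L2} identifies complete integral closure with being strongly Archimedean, and Corollary~\ref{L21} upgrades this to the statement that $\bigcap_{n=1}^{\infty}(I^n)_v = (0)$ for every ideal $I$ with $I_v \subsetneq D$. So all that remains is to specialize that general conclusion to principal ideals.

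Concretely, I would argue as follows. Let $x \in D$ be a nonunit and set $I = (x)$. Since a principal ideal is divisorial, $I_v = (x)$, and since $x$ is a nonunit we have $(x) \subsetneq D$. Thus $I$ meets the hypothesis of Corollary~\ref{L21}. Using that $(x)^n = (x^n)$ and that $(x^n)$ is again principal, hence divisorial, we get $((x)^n)_v = (x^n)_v = (x^n)$ for every integer $n \geq 1$. Applying Corollary~\ref{L21} to $I = (x)$ then yields $\bigcap_{n=1}^{\infty}(x^n) = \bigcap_{n=1}^{\infty}((x)^n)_v = (0)$. As $x$ was an arbitrary nonunit, $D$ is Archimedean.

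An equally short alternative route bypasses Corollary~\ref{L21} and goes directly through Proposition~\ref{L2}: a completely integrally closed domain is strongly Archimedean, and, as noted just before Proposition~\ref{L2}, a strongly Archimedean domain is certainly Archimedean. Either way, I do not expect any genuine obstacle. The only nontrivial inputs are Proposition~\ref{L2} and Corollary~\ref{L21}, which are already in hand; the sole thing to check is the harmless point that a principal ideal generated by a nonunit is a \emph{proper divisorial} ideal, which is exactly what lets one feed principal powers into the general statement. For this reason I would present Corollary~\ref{N} as a one-line deduction from Corollary~\ref{L21} (with the direct principal-ideal verification indicated parenthetically for completeness), rather than reproving anything.
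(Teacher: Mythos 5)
Your proposal is correct and matches the paper's own treatment: the paper states Corollary~\ref{N} immediately after remarking that it ``is true for any completely integrally closed integral domain,'' i.e., it is exactly the ``in particular'' clause of Corollary~\ref{L21} (which in turn rests on Proposition~\ref{L2}), with the PVMD hypothesis playing no role. Your specialization of Corollary~\ref{L21} to proper principal ideals is precisely the one-line deduction the paper intends.
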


\begin{corollary} \label{P}
\emph{(1)}  \emph{(\cite[Theorem 3.1]{BD})} A GCD domain $D$ is completely integrally closed if and only if $D$
is Archimedean.

\emph{(2)}  \emph{(\cite[Corollary 3]{AA})} A GGCD domain $D$ is completely integrally closed
if and only if $\bigcap_{n = 1}^{\infty}I^{n} = (0)$ for every proper invertible ideal $I$ of $D$.

\emph{(3)}  \emph{(\cite[Corollary 26.9]{gilmer})} A Pr\"{u}fer domain $D$ is completely integrally closed
if and only if $\bigcap_{n = 1}^{\infty}I^{n} = (0)$ for every proper invertible ideal $I$ of $D$.
\end{corollary}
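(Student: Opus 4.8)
Each of the three parts should fall out of Theorem~\ref{B} once one identifies the proper $t$-invertible $t$-ideals in the class at hand and checks that $I \mapsto (I^n)_v$ reduces to a genuine power. The plan is therefore uniform: show that the family of proper $t$-invertible $t$-ideals coincides with the proper principal ideals (for GCD domains) or the proper invertible ideals (for GGCD and Pr\"ufer domains), verify that $(I^n)_v = (x^n)$ resp. $I^n$ on that family, and then read off the asserted condition directly from Theorem~\ref{B}. Note that all three classes are PVMDs, so Theorem~\ref{B} indeed applies.

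For part (1), I would first invoke the preliminary fact that a $t$-invertible $t$-ideal $I$ (with $t$ of finite character) is of finite type. Since in a GCD domain every finite type $v$-ideal is principal, the proper $t$-invertible $t$-ideals are exactly the $(x)$ with $x$ a nonunit, and conversely each such $(x)$ is a proper $t$-invertible $t$-ideal. As $((x)^n)_v = (x^n)$, the condition in Theorem~\ref{B} becomes $\bigcap_{n=1}^{\infty}(x^n) = (0)$ for every nonunit $x$, which is precisely the Archimedean condition; this gives (1).

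Parts (2) and (3) I would treat together. In a GGCD domain every finite type $v$-ideal is invertible, so a proper $t$-invertible $t$-ideal is a proper invertible ideal; conversely an invertible ideal is $t$-invertible and, being divisorial, is a $t$-ideal. In a Pr\"ufer domain the point instead is that $t = d$, so the proper $t$-invertible $t$-ideals are again exactly the proper invertible ideals. Since a power of an invertible ideal is invertible and invertible ideals are divisorial, in both cases $(I^n)_v = I^n$, and Theorem~\ref{B} reads $\bigcap_{n=1}^{\infty}I^n = (0)$ for every proper invertible ideal $I$, giving (2) and (3).

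I do not expect a genuine obstacle: the argument is bookkeeping built on Theorem~\ref{B}. The only thing to handle with care is the identification of the proper $t$-invertible $t$-ideals together with the collapse of $(I^n)_v$ --- that is, assembling the standard facts that $t$-invertible $t$-ideals are of finite type, that finite type $v$-ideals are principal in a GCD domain and invertible in a GGCD domain, that invertible ideals are divisorial, and that $t = d$ on a Pr\"ufer domain. Once these are in hand, the three equivalences are immediate consequences of Theorem~\ref{B}.
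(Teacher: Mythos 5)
Your proposal is correct and takes essentially the same approach as the paper: the paper's one-line proof observes precisely that a GCD (resp., GGCD or Pr\"{u}fer) domain is a PVMD in which every $t$-invertible $t$-ideal is principal (resp., invertible), and then reads off the result from Theorem~\ref{B}. Your write-up simply makes explicit the routine verifications (finite type of $t$-invertible $t$-ideals, divisoriality of invertible ideals, collapse of $(I^n)_v$) that the paper leaves implicit.
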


\begin{proof}
Note that a GCD (resp., GGCD or Pr\"{u}fer) domain $D$ is a PVMD in which every $t$-invertible $t$-ideal is principal (resp., invertible).
\end{proof}

We would, of course, like to resolve the two conjectures one way or another.
One way of doing that would be to establish the connection, if one exists,
between a PVMD $D$ being completely integrally closed
and its Kronecker function ring $T$ (or the ring $D\{X\} = D[X]_{N_v}$ \cite{Kan}) being completely integrally
closed. For the Kronecker function ring $T$ (or $D\{X\}$) is a Bezout domain, which being a GCD
domain, is completely integrally closed if and only if $\bigcap_{n = 1}^{\infty}(x^{n}) = (0)$ for every nonunit $x \in T$ (or $D\{X\}$).
In the absence of any insight in that direction,
we are reduced to making the best of the situation.

If we can link every proper $t$-invertible
\mbox{$t$-ideal} $I$ of a PVMD $D$ with a nonunit \mbox{$x \in D$} such that $\bigcap_{n = 1}^{\infty}(I^{n})_{v}\subseteq (x^{m})$
for every integer $m \geq 1$, then $D$ being Archimedean
would be equivalent to $D$ being completely integrally closed. This can be
done in two distinct ways, one computational and the other theoretical; we
pursue both courses.

\begin{lemma} \label{T}
Let $I$ and $J$ be ideals of an integral domain $D$.

\emph{(1)}  If $I \subseteq J$, then $\bigcap_{n = 1}^{\infty}I^{n} \subseteq \bigcap_{n = 1}^{\infty}J^{n}$
and $\bigcap_{n = 1}^{\infty}(I^{n})_{v} \subseteq
\bigcap_{n = 1}^{\infty}(J^{n})_{v}$.

\emph{(2)} If  $I^{k}\subseteq (x)$ for some $x \in D$ and integer $k \geq 1$, then

$\bigcap_{n = 1}^{\infty}I^{n} \subseteq \bigcap_{n = 1}^{\infty}(I^{n})_{v} \subseteq \bigcap_{n = 1}^{\infty}(x^{n})$.
\end{lemma}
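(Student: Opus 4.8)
The plan is to reduce everything to two elementary facts about star operations recorded in the definition: that the $v$-operation is order-preserving and extensive (property (ii), giving $A \subseteq A_v$ and $A \subseteq B \Rightarrow A_v \subseteq B_v$), and that principal fractional ideals are $v$-ideals (property (i), giving $(y)_v = (y)$). No deeper structure of $D$ is needed, so both parts should be routine.

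For part (1), I would first note that if $I \subseteq J$, then multiplying this inclusion by itself repeatedly yields $I^n \subseteq J^n$ for every $n \geq 1$; intersecting over all $n$ gives $\bigcap_{n=1}^{\infty} I^n \subseteq \bigcap_{n=1}^{\infty} J^n$. Applying monotonicity of the $v$-operation to each inclusion $I^n \subseteq J^n$ produces $(I^n)_v \subseteq (J^n)_v$, and intersecting over $n$ gives the second containment. For the first inclusion of part (2), $\bigcap_{n=1}^{\infty} I^n \subseteq \bigcap_{n=1}^{\infty} (I^n)_v$ is immediate from $I^n \subseteq (I^n)_v$ for every $n$.

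The one step that carries any content is the second inclusion of part (2). Here I would fix an arbitrary integer $m \geq 1$ and use the hypothesis $I^k \subseteq (x)$ to obtain $I^{km} = (I^k)^m \subseteq (x)^m = (x^m)$. Since $(x^m)$ is principal, it is a $v$-ideal, so passing to $v$-closures gives $(I^{km})_v \subseteq ((x^m))_v = (x^m)$. Because $km$ is one of the indices occurring in the intersection, $\bigcap_{n=1}^{\infty} (I^n)_v \subseteq (I^{km})_v \subseteq (x^m)$. As $m$ was arbitrary, intersecting over all $m$ yields $\bigcap_{n=1}^{\infty} (I^n)_v \subseteq \bigcap_{m=1}^{\infty} (x^m)$, which completes the chain.

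There is no genuine obstacle; the only point requiring a moment's care is the device used in (2): rather than trying to control the full intersection $\bigcap_{n=1}^{\infty} (I^n)_v$ directly, one selects the single cofinal index $km$ and exploits the divisoriality of the principal ideal $(x^m)$ to absorb the $v$-closure. This reindexing by multiples of $k$ is exactly what converts the hypothesis $I^k \subseteq (x)$ into the termwise bounds $(I^{km})_v \subseteq (x^m)$.
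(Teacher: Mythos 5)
Your proof is correct and follows essentially the same route as the paper: both arguments hinge on reindexing by the multiples $km$ of $k$, bounding $(I^{km})_v \subseteq (x^m)$ via the divisoriality of principal ideals, and then intersecting. The only cosmetic difference is that the paper records the full equality $\bigcap_{n=1}^{\infty}(I^{nk})_v = \bigcap_{n=1}^{\infty}(I^n)_v$, whereas you use only the (trivial) containment $\bigcap_{n=1}^{\infty}(I^n)_v \subseteq (I^{km})_v$, which is all that is needed.
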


\begin{proof}
(1) is obvious. For (2), first note that $\bigcap_{n = 1}^{\infty}(I^{nk})_v = \bigcap_{n = 1}^{\infty}(I^{n})_v$
and $\bigcap_{n = 1}^{\infty}I^{n} \subseteq \bigcap_{n = 1}^{\infty}(I^{n})_{v}$.
If $I^{k}\subseteq (x)$, then $(I^{nk})_v \subseteq (x^n)$ for every integer $n \geq 1$, and thus
$\bigcap_{n = 1}^{\infty}I^{n} \subseteq \bigcap_{n = 1}^{\infty}(I^{n})_v =
\bigcap_{n = 1}^{\infty}(I^{nk})_v \subseteq \bigcap_{n = 1}^{\infty}(x^{n})$.
 \end{proof}

\begin{proposition} \label{Q}
Let $D$ be a PVMD such that for every $0 \neq
a_{1}, \ldots, a_{s} \in D$ with $(a_{1}, \ldots, a_{s})_{v} \neq D$,
there is an integer $k \geq 1$ and a nonunit $d \in D$ such that
$(a_{1}^k, \ldots, a_{s}^k) \subseteq (d)$. Then $D$ is completely integrally closed if and
only if $D$ is Archimedean.
\end{proposition}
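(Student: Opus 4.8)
The plan is to prove the easy implication directly from earlier results and to reduce the hard implication to Theorem~\ref{B} via the combinatorial device supplied by the hypothesis. The forward direction, that a completely integrally closed PVMD is Archimedean, is already Corollary~\ref{N} (it holds for any completely integrally closed domain by Corollary~\ref{L21}), so nothing new is needed there.

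For the converse, assume $D$ is Archimedean; I want to conclude that $D$ is completely integrally closed. Since $D$ is a PVMD, Theorem~\ref{B} reduces this to showing that $\bigcap_{n=1}^{\infty}(I^n)_v = (0)$ for every proper $t$-invertible $t$-ideal $I$ of $D$. Fix such an $I$. Because $I$ is $t$-invertible and $t$ has finite character, $I$ is a $v$-ideal of finite type, so I may write $I = (a_1,\dots,a_s)_v$ with $0 \neq a_1,\dots,a_s \in D$; and $I \neq D$ gives $(a_1,\dots,a_s)_v \neq D$. This is exactly the shape to which the hypothesis applies, yielding an integer $k \geq 1$ and a nonunit $d \in D$ with $(a_1^k,\dots,a_s^k) \subseteq (d)$.

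The key step, and the one that makes the hypothesis usable, is to upgrade the containment of the generators' $k$-th powers into a containment of a genuine power of the ideal $J := (a_1,\dots,a_s)$. I would prove the elementary monomial inclusion $J^{s(k-1)+1} \subseteq (a_1^k,\dots,a_s^k)$: any generating monomial $a_1^{e_1}\cdots a_s^{e_s}$ of $J^{s(k-1)+1}$ has $e_1 + \cdots + e_s = s(k-1)+1 > s(k-1)$, so by the pigeonhole principle some $e_i \geq k$, whence the monomial lies in $(a_1^k,\dots,a_s^k)$. Combining, $J^{s(k-1)+1} \subseteq (d)$.

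Finally I apply Lemma~\ref{T}(2) to the finitely generated ideal $J$ with exponent $s(k-1)+1$ and element $d$, obtaining $\bigcap_{n=1}^{\infty}(J^n)_v \subseteq \bigcap_{n=1}^{\infty}(d^n)$. Since $I = J_v$, we have $(I^n)_v = ((J_v)^n)_v = (J^n)_v$, so $\bigcap_{n=1}^{\infty}(I^n)_v = \bigcap_{n=1}^{\infty}(J^n)_v$; and as $d$ is a nonunit and $D$ is Archimedean, $\bigcap_{n=1}^{\infty}(d^n) = (0)$. Hence $\bigcap_{n=1}^{\infty}(I^n)_v = (0)$, and Theorem~\ref{B} gives that $D$ is completely integrally closed. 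The only real obstacle is the combinatorial inclusion $J^{s(k-1)+1} \subseteq (a_1^k,\dots,a_s^k)$, which bridges the hypothesis (about $k$-th powers of the generators) and the power-of-the-ideal form needed to invoke Lemma~\ref{T}(2); everything else is bookkeeping around Theorem~\ref{B}.
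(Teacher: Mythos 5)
Your proof is correct, and its skeleton matches the paper's: the forward direction is dispatched by Corollary~\ref{N}, and the converse is reduced to Theorem~\ref{B} by producing, for each proper $t$-invertible $t$-ideal $I=(a_1,\dots,a_s)_v$, a nonunit $d$ and a power of $I$ (up to $v$-closure) inside $(d)$, then invoking Lemma~\ref{T}(2) and the Archimedean hypothesis. Where you genuinely diverge is in the bridge from the hypothesis about $k$-th powers of the \emph{generators} to a power of the \emph{ideal}: the paper uses the PVMD-specific identity $(a_1^k,\dots,a_s^k)_v = ((a_1,\dots,a_s)^k)_v$ (quoted from \cite[Lemma 3.3]{AZ}), which immediately gives $(I^k)_v \subseteq (d)$, whereas you use the elementary pigeonhole inclusion $J^{s(k-1)+1} \subseteq (a_1^k,\dots,a_s^k)$ for $J=(a_1,\dots,a_s)$, valid in any commutative ring, together with the standard star-operation identity $((J_v)^n)_v=(J^n)_v$. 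Your route trades the sharp exponent $k$ for the larger $s(k-1)+1$ (irrelevant for the intersection argument) and in exchange removes any reliance on the cited lemma; indeed, in your version the PVMD hypothesis enters only through Theorem~\ref{B}, which makes it transparent that the same argument proves the analogous statement for any class of domains where a Theorem~\ref{B}-type criterion holds (e.g., essential domains via Corollary~\ref{X8}, with $v$-invertible ideals). Both proofs are complete and correct; yours is self-contained at the cost of a slightly cruder combinatorial estimate.
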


\begin{proof}
A completely integrally closed domain is always Archimedean.  For the converse, suppose that $D$ is
Archimedean, that is, $\bigcap_{n = 1}^{\infty}(x^{n}) = (0)$ for every nonunit $x \in D$. Now,
take a proper $t$-invertible $t$-ideal $I$ of $D$. Then $I =
(a_{1}, \ldots, a_{s})_{v}$ for some $0 \neq a_{1}, \ldots, a_{s} \in
D$. By hypothesis, there is an integer $k \geq 1$
and a nonunit $d \in D$ such that $(a_{1}^{k}, \ldots, a_{s}^{k}) \subseteq (d)$,
and thus $(a_{1}^{k}, \ldots,
a_{s}^{k})_{v} \subseteq (d)$. Now, as $D$ is a PVMD, we have
$(a_{1}^{k}, \ldots ,a_{s}^{k})_{v}=((a_{1}, \ldots,
a_{s})^{k})_{v}$ \cite[Lemma 3.3]{AZ}; so $I$ is $t$-invertible and $(I^{k})_{v} \subseteq (d)$.
Hence, $\bigcap_{n = 1}^{\infty}(I^{n})_{v} \subseteq \bigcap_{n = 1}^{\infty}(d^n)$ by Lemma~\ref{T}(2).
But $D$ is Archimedean,
and thus $\bigcap_{n = 1}^{\infty}(I^{n})_{v} \subseteq \bigcap_{n = 1}^{\infty}(d^n) = (0)$
for every $t$-invertible $t$-ideal $I$ of $D$. Hence, $D$ is completely integrally closed by Theorem~\ref{B}.
\end{proof}

As a repeat corollary, we conclude that a GCD domain $D$ is completely
integrally closed if and only if $D$ is Archimedean because in a GCD domain,
$I_{v}$ is principal for every nonzero finitely generated ideal $I$ of $D$. This is, of
course, a known result (\cite[Theorem 3.1]{BD}). Next, an integral domain $D$ with the
\emph{QR property} (every overring of $D$ is a quotient ring) is known to be a Pr\"{u}fer
domain such that for every nonzero finitely generated ideal $I$ of $D$, there is an $i \in I$ and an integer $n \geq 1$ such that
$I^{n} \subseteq (i)$ \cite[Theorem 5]{P}. Thus, we have the following corollary to Proposition \ref{Q}.

\begin{corollary}  \label{R0}
A QR domain $D$ is completely integrally closed if and only if $D$ is Archimedean.
\end{corollary}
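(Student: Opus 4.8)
The plan is to verify that every QR domain satisfies the hypotheses of Proposition~\ref{Q} and then invoke that proposition directly. First I would recall, as noted in the paragraph preceding the statement, that a QR domain $D$ is a Pr\"{u}fer domain, and hence a PVMD, so the standing PVMD assumption of Proposition~\ref{Q} is automatically met. It then remains only to check the ideal-theoretic hypothesis: for every $0 \neq a_1, \ldots, a_s \in D$ with $(a_1, \ldots, a_s)_v \neq D$, there exist an integer $k \geq 1$ and a nonunit $d \in D$ with $(a_1^k, \ldots, a_s^k) \subseteq (d)$.

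To produce such $k$ and $d$, I would set $I = (a_1, \ldots, a_s)$ and apply the cited characterization of QR domains \cite[Theorem 5]{P}: there is an element $i \in I$ and an integer $n \geq 1$ such that $I^n \subseteq (i)$. Taking $k = n$ and $d = i$, each generator satisfies $a_j^n \in I^n \subseteq (i)$, and hence $(a_1^n, \ldots, a_s^n) \subseteq (i) = (d)$, which is precisely the required containment. The one point that needs care is that $d$ must be a \emph{nonunit}: this is exactly where the hypothesis $(a_1, \ldots, a_s)_v \neq D$ enters, for then $i \in I \subseteq I_v \subsetneq D$ forces $i$ to lie in a proper ideal and so to be a nonunit.

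With the hypotheses of Proposition~\ref{Q} thereby confirmed, the equivalence ``$D$ is completely integrally closed if and only if $D$ is Archimedean'' follows immediately from that proposition. I do not expect a genuine obstacle here; the only subtlety is the bookkeeping in the previous paragraph, namely passing from the QR characterization (which is phrased for an arbitrary finitely generated ideal $I$ and yields the generator-independent bound $I^n \subseteq (i)$) to the specific containment $(a_1^k, \ldots, a_s^k) \subseteq (d)$ demanded by Proposition~\ref{Q}, together with the verification that the resulting $d$ is a nonunit. Both are routine once one observes that $a_j^n \in I^n$ for each $j$.
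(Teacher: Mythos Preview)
Your proposal is correct and follows essentially the same approach as the paper: the paper states Corollary~\ref{R0} directly as a consequence of Proposition~\ref{Q}, using Pendleton's characterization \cite[Theorem 5]{P} that a QR domain is Pr\"{u}fer with $I^n \subseteq (i)$ for some $i \in I$ and $n \geq 1$. You have simply made explicit the routine bookkeeping (passing to $(a_1^n,\ldots,a_s^n) \subseteq (i)$ and checking $i$ is a nonunit) that the paper leaves to the reader.
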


We can do somewhat better than Corollary \ref{R0}. Recall that an integral domain $D$ with quotient field $K$
is called a \emph{$t$-QR domain} if every $t$-linked overring of $D$ is a quotient
ring of $D$. Here, in an extension $D \subseteq R \subseteq K$, $R$ is said
to be \emph{$t$-linked over $D$} if
$A^{-1} = D$ implies $(AR)^{-1} = R$ for every nonzero finitely generated ideal $A$ of $D$.
Obviously, every flat overring is $t$-linked; so every overring of a Pr\"{u}fer domain is $t$-linked. In \cite[Theorem 1.3]{DHLZ},
it was shown that a PVMD $D$ has the $t$-QR property if and only if for
every nonzero finitely generated ideal $I$ of $D$, there is a $b \in I_{v}$ and an integer $n \geq 1$ such that
$I^{n} \subseteq (b)$. Since in a Pr\"{u}fer domain every nonzero finitely
generated ideal is a $v$-ideal, this characterization reduces to that given
by Pendleton in \cite[Theorem 5]{P} for QR domains. Thus, we have the
following result as well. (We are thankful to Tiberiu Dumitrescu for
reminding us of \cite{DHLZ}.)

\begin{corollary}   \label{R00}
A $t$-QR PVMD $D$ is completely integrally closed if and only if
$D$ is Archimedean.
\end{corollary}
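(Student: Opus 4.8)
The plan is to reduce Corollary~\ref{R00} to Proposition~\ref{Q} by showing that the $t$-QR property on a PVMD supplies exactly the hypothesis that Proposition~\ref{Q} requires. Since a completely integrally closed domain is always Archimedean (Corollary~\ref{N}), only the converse needs work. So I assume $D$ is a $t$-QR PVMD that is Archimedean and aim to verify the hypothesis of Proposition~\ref{Q}: for every $0 \neq a_1, \ldots, a_s \in D$ with $(a_1, \ldots, a_s)_v \neq D$, there is an integer $k \geq 1$ and a nonunit $d \in D$ with $(a_1^k, \ldots, a_s^k) \subseteq (d)$.

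First I would invoke the characterization of \cite[Theorem 1.3]{DHLZ}, which is already quoted in the text: a PVMD $D$ has the $t$-QR property if and only if for every nonzero finitely generated ideal $I$ of $D$ there is a $b \in I_v$ and an integer $n \geq 1$ with $I^n \subseteq (b)$. Apply this with $I = (a_1, \ldots, a_s)$. Since $D$ is a PVMD, $I$ is $t$-invertible, and $((a_1,\ldots,a_s)^k)_v = (a_1^k, \ldots, a_s^k)_v$ by \cite[Lemma 3.3]{AZ}; this lets me pass freely between powers of the ideal and ideals generated by powers of the generators, which is the algebraic bridge to the form demanded by Proposition~\ref{Q}.

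The one point needing care is the nonunit requirement. The DHLZ characterization gives $b \in I_v$ with $I^n \subseteq (b)$, but a priori $b$ could be a unit; I must rule that out using $(a_1,\ldots,a_s)_v \neq D$. If $b$ were a unit then $(b) = D$, forcing $I^n = D$, hence $I_v = D$, contradicting properness of $(a_1,\ldots,a_s)_v$. So $b$ is a nonunit. Setting $d = b$ and $k = n$, and noting $(a_1^n, \ldots, a_s^n) \subseteq ((a_1,\ldots,a_s)^n)_v = (I^n)_v \subseteq (b) = (d)$ (using $I^n \subseteq (b)$ and that $(b)$ is divisorial), delivers exactly $(a_1^k, \ldots, a_s^k) \subseteq (d)$ with $d$ a nonunit.

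With the hypothesis of Proposition~\ref{Q} confirmed, that proposition yields that $D$ is completely integrally closed, completing the converse and hence the corollary. I expect the main obstacle to be purely bookkeeping: correctly tracking the interchange between $I^n \subseteq (b)$ (an honest ideal containment) and the $v$-closure manipulations, and making sure the generator $b$ produced by DHLZ is genuinely a nonunit rather than assuming it. Everything else is a direct citation of Proposition~\ref{Q} and the two quoted results.
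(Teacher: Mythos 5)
Your reduction to Proposition~\ref{Q} via the DHLZ characterization is exactly the paper's argument: the paper presents Corollary~\ref{R00} as an immediate consequence of Proposition~\ref{Q} once \cite[Theorem 1.3]{DHLZ} is quoted, so your route and the paper's coincide. One small slip in your nonunit argument: from $(b) = D$ you infer ``forcing $I^n = D$,'' which does not follow (the containment $I^n \subseteq D$ is automatic and says nothing); but the claim is rescued one step earlier, since $b \in I_v$ with $b$ a unit would make the integral ideal $I_v = (a_1, \ldots, a_s)_v$ contain a unit and hence equal $D$, contradicting $(a_1, \ldots, a_s)_v \neq D$ directly. With that one-line repair your verification of the hypothesis of Proposition~\ref{Q} is complete and matches the intended proof.
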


Proposition \ref{Q} clearly points to the following result
once we note that an integrally closed AGCD domain is a PVMD.
Also, note that Noetherian domains are Archimedean and there are Noetherian AGCD domains that are not integrally
closed.

\begin{corollary}  \label{R}
An integrally closed AGCD domain $D$ is completely integrally
closed if and only if $D$ is Archimedean.
\end{corollary}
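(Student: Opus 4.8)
The plan is to verify that an integrally closed AGCD domain $D$ satisfies the hypothesis of Proposition~\ref{Q}, after which the conclusion is immediate. First I would recall that an integrally closed AGCD domain is a PVMD (indeed, a PVMD with torsion $t$-class group), so the PVMD hypothesis of Proposition~\ref{Q} is met; it then remains only to produce, for each $0 \neq a_1, \ldots, a_s \in D$ with $(a_1, \ldots, a_s)_v \neq D$, an integer $k \geq 1$ and a \emph{nonunit} $d \in D$ with $(a_1^k, \ldots, a_s^k) \subseteq (d)$.

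The candidate for $d$ comes straight from the AGCD characterization: for any $0 \neq a_1, \ldots, a_s \in D$ there is an integer $k \geq 1$ with $(a_1^k, \ldots, a_s^k)_v = (d)$ principal. Since $(a_1^k, \ldots, a_s^k) \subseteq (a_1^k, \ldots, a_s^k)_v = (d)$, the containment required by Proposition~\ref{Q} holds automatically. Thus the entire content of the verification is the claim that $d$ may be taken to be a nonunit whenever $I := (a_1, \ldots, a_s)$ has $I_v \neq D$.

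The hard part is precisely this last point, and I would settle it by localizing at a maximal $t$-ideal. Suppose, to the contrary, that $d$ is a unit, so that $(a_1^k, \ldots, a_s^k)_v = D$. Since $D$ is a PVMD, \cite[Lemma 3.3]{AZ} gives $(a_1^k, \ldots, a_s^k)_v = (I^k)_v$, whence $(I^k)_v = D$. Because $I_v = I_t$ is a proper $t$-ideal, it lies in some maximal $t$-ideal $M$; then $ID_M \subseteq MD_M \subsetneq D_M$, and as $D_M$ is a valuation domain, $ID_M$ is a proper principal ideal, so $I^kD_M$ is proper as well. On the other hand, since $D$ is an integrally closed PVMD we have $t = w$, so $(I^k)_w = (I^k)_t = (I^k)_v = D$; localizing gives $I^kD_M = (I^k)_wD_M = D_M$, a contradiction. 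Hence $d$ is a nonunit.

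With the hypothesis of Proposition~\ref{Q} verified, that proposition yields that $D$ is completely integrally closed if and only if $D$ is Archimedean, completing the proof. I expect the valuation-theoretic argument forcing $d$ to be a nonunit to be the only delicate step; everything else is a direct appeal to the AGCD characterization and to Proposition~\ref{Q}.
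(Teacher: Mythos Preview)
Your proof is correct and follows exactly the route the paper intends: the corollary is stated immediately after Proposition~\ref{Q} with the remark that it ``clearly points to'' this result once one notes that an integrally closed AGCD domain is a PVMD, and your argument fills in precisely the verification that the AGCD-supplied principal generator $d$ of $(a_1^k,\ldots,a_s^k)_v$ is a nonunit when $(a_1,\ldots,a_s)_v\neq D$. Your valuation-theoretic localization works, though it could be shortened: since $I_v=I_t\neq D$ gives $I\subseteq M$ for some maximal $t$-ideal $M$, we have $I^k\subseteq M$ and hence $(I^k)_v=(I^k)_t\subseteq M\subsetneq D$, so $(d)=(a_1^k,\ldots,a_s^k)_v=(I^k)_v\neq D$ directly.
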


It was shown in \cite{Z} that an integrally closed AGCD domain is a PVMD with
torsion $t$-class group and that a PVMD with torsion $t$-class group is an
AGCD domain. Also, since a Pr\"{u}fer domain is a PVMD, a Pr\"{u}fer domain with
torsion class group is an AGCD domain. Hence, a Pr\"{u}fer domain $D$ with torsion
class group is completely integrally closed if and only if $D$ is
Archimedean.  These facts are mentioned here because the QR property was mentioned in
\cite{Ohm}. Of course, Ohm did not know about AGCD domains, nor about $t$-QR domains,
at the time of writing \cite{Ohm}. The results in this section
greatly expand the scope of his work from mere QR domains to $t$-QR and AGCD
domains. Thus, we state the following result.

\begin{proposition}  \label{S}
The following statements are equivalent for an AGCD domain $D$.

\emph{(1)} $D[[X]]$ is integrally closed.

\emph{(2)} $D$ is completely integrally closed.

\emph{(3)} $D[[X]]$ is completely integrally closed.
\end{proposition}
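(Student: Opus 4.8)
The plan is to prove the cycle of implications (1) $\Rightarrow$ (2) $\Rightarrow$ (3) $\Rightarrow$ (1), since each arrow reduces to a result already in hand, with only the first requiring any content specific to AGCD domains.

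The substantive implication is (1) $\Rightarrow$ (2), and the guiding idea is that integral closure of $D[[X]]$ supplies exactly the two hypotheses needed to feed Corollary~\ref{R}. First I would invoke Ohm's result \cite[Theorem 0.1]{Ohm} (recalled at the start of this section): if $D[[X]]$ is integrally closed, then $D$ is integrally closed \emph{and} Archimedean. Hence $D$ is an integrally closed AGCD domain that happens to be Archimedean. But Corollary~\ref{R} states precisely that an integrally closed AGCD domain is completely integrally closed if and only if it is Archimedean; applying it gives that $D$ is completely integrally closed. For (2) $\Rightarrow$ (3) I would simply quote the well-known equivalence \cite[Theorem 13.9]{gilmer} (also recorded in the Introduction) that $D[[X]]$ is completely integrally closed if and only if $D$ is completely integrally closed. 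For (3) $\Rightarrow$ (1) I would use that a completely integrally closed domain is integrally closed, so $D[[X]]$ completely integrally closed forces $D[[X]]$ integrally closed. Note that the AGCD hypothesis is used only in the first implication; the other two hold for an arbitrary integral domain.

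The ``hard part'' is not in this proposition at all but is already discharged in the machinery it cites: the genuine work lies in Corollary~\ref{R}, and thus in Proposition~\ref{Q} and Theorem~\ref{B}, namely the fact that an Archimedean integrally closed AGCD domain must be completely integrally closed. With that in place, the proof here is a three-line assembly, and the only point demanding care is that Ohm's theorem must be read as delivering \emph{both} conclusions---integral closure and the Archimedean property---since together they are exactly the hypotheses of Corollary~\ref{R}. I would also remark, for context, that since an integrally closed AGCD domain is a PVMD with torsion $t$-class group, this proposition is the promised instance confirming Conjectures~\ref{L} and~\ref{L1} for the AGCD class.
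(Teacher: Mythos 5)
Your proof is correct and follows exactly the paper's own argument: the same cycle (1) $\Rightarrow$ (2) $\Rightarrow$ (3) $\Rightarrow$ (1), with Ohm's theorem plus Corollary~\ref{R} handling the only substantive implication and the standard facts about complete integral closure of $D[[X]]$ handling the rest. Nothing to change.
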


\begin{proof}
(1) $\Rightarrow$ (2) $D[[X]]$ is integrally closed implies that $D$ is
integrally closed and Archimedean by \cite[Theorem 0.1]{Ohm} (or \cite[Theorem 13.10 and Proposition 13.11]{gilmer}).
Thus, $D$ is completely integrally closed by Corollary~\ref{R}.

(2) $\Rightarrow$ (3) $D$ is completely integrally closed implies that $D[[X]]$ is completely integrally
closed.

(3) $\Rightarrow$ (1) This is obvious since a completely integrally closed domain
is integrally closed.
\end{proof}

\begin{corollary} \label{S1}
Let $D$ be an AGCD domain. Then $D[[X]]$ is (completely) integrally closed if and only if
$D$ is a completely integrally closed PVMD with torsion $t$-class group.
\end{corollary}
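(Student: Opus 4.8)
The plan is to derive Corollary~\ref{S1} as a direct combination of Proposition~\ref{S} with the known characterization of integrally closed AGCD domains stated earlier in the excerpt, namely that an integral domain is a PVMD with torsion $t$-class group if and only if it is an integrally closed AGCD domain. The corollary is essentially a repackaging of Proposition~\ref{S} in which the hypothesis ``$D$ is an AGCD domain'' is absorbed into the conclusion via this equivalence, so I expect the proof to be short.

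First I would assume $D$ is an AGCD domain and prove the forward direction. Suppose $D[[X]]$ is (completely) integrally closed. By Proposition~\ref{S}, $D$ is completely integrally closed; in particular $D$ is integrally closed. Thus $D$ is an integrally closed AGCD domain, and by the cited result (\cite[Corollary 3.8 and Theorem 3.9]{Z}) recorded in the introduction, $D$ is a PVMD with torsion $t$-class group. This gives the desired conclusion that $D$ is a completely integrally closed PVMD with torsion $t$-class group.

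For the converse, suppose $D$ is a completely integrally closed PVMD with torsion $t$-class group. Then in particular $D$ is completely integrally closed, so by Proposition~\ref{S} (the implication (2)~$\Rightarrow$~(3), or (2)~$\Rightarrow$~(1)) both $D[[X]]$ is completely integrally closed and $D[[X]]$ is integrally closed. Strictly speaking, to invoke Proposition~\ref{S} I should note that its hypothesis ``$D$ is an AGCD domain'' is met: a PVMD with torsion $t$-class group is an AGCD domain by the same cited result, so the hypothesis of the corollary (that $D$ is an AGCD domain) is automatically consistent with this. In fact one does not even need Proposition~\ref{S} for the converse, since the well-known fact that $D[[X]]$ is completely integrally closed if and only if $D$ is completely integrally closed (recorded in the introduction, \cite[Theorem 13.9]{gilmer}) directly yields that $D[[X]]$ is completely integrally closed, and hence integrally closed.

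The only point requiring any care—and the closest thing to an obstacle—is the bookkeeping around the phrase ``(completely) integrally closed'' in the statement: I must make sure that the equivalence of ``$D[[X]]$ integrally closed'' and ``$D[[X]]$ completely integrally closed'' is justified, which is exactly the content of Proposition~\ref{S} parts (1) and (3) being equivalent (both equivalent to (2)). So the cleanest writeup is simply to observe that, since $D$ is an AGCD domain, Proposition~\ref{S} shows conditions (1), (2), and (3) are all equivalent, and then to note that condition (2)—that $D$ is completely integrally closed—together with $D$ being an integrally closed AGCD domain is equivalent to $D$ being a completely integrally closed PVMD with torsion $t$-class group, via the El Baghdadi--Zafrullah characterization. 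This reduces the entire corollary to citing Proposition~\ref{S} and the introductory equivalence, with no new computation required.
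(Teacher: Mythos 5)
Your proof is correct and follows exactly the route the paper intends: Corollary~\ref{S1} is stated without proof precisely because it is the combination of Proposition~\ref{S} with the equivalence, recorded in the Introduction from \cite[Corollary 3.8 and Theorem 3.9]{Z}, that an integral domain is a PVMD with torsion $t$-class group if and only if it is an integrally closed AGCD domain. Your handling of the ``(completely)'' parenthetical via the equivalence of (1) and (3) in Proposition~\ref{S}, and your observation that the converse needs only the fact that $D[[X]]$ is completely integrally closed if and only if $D$ is, match the paper's reasoning.
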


Corollaries \ref{R0} - \ref{R} use Proposition~\ref{Q} to give several classes of PVMDs
in which being completely integrally closed is equivalent to being Archimedean.
The next result shows that this equivalence actually holds as long as $D''$, the complete integral closure of $D$,
is a quotient ring of $D$.
Since $D''$  is always a \mbox{$t$-linked} overring of $D$ \cite[Proposition 2.5]{AHZ},
Corollary~\ref{R00} also follows from Proposition~\ref{S2}.

\begin{proposition} \label{S2}
Let $D$ be an integral domain with complete integral closure $D'' = D_S$ for a multiplicative subset $S$ of $D$.
Then $D$ is completely integrally closed if and only if $D$ is Archimedean.
\end{proposition}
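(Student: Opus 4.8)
The plan is to dispose of one implication instantly and let the hypothesis $D'' = D_S$ carry the other. The forward direction needs nothing special: if $D$ is completely integrally closed, then $D$ is Archimedean by Corollary~\ref{L21} (this holds for \emph{any} integral domain and does not use that $D''$ is a localization). So all the content lies in the converse, and I would spend my effort there.

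For the converse I would assume $D$ is Archimedean and aim to prove $D = D''$, i.e. $D = D_S$. Since $S$ is a multiplicative subset of $D$ (a set of elements), we have $D \subseteq D_S \subseteq K$, and $D_S = D$ if and only if every $s \in S$ is a unit of $D$; indeed $D_S = D$ exactly when each generator $1/s$ already lies in $D$. So it suffices to show every element of $S$ is a unit. I would argue by contradiction: suppose some $s \in S$ is a nonunit. Then $1/s \in D_S = D''$, so $1/s$ is almost integral over $D$; by the definition of the complete integral closure there is a single $0 \neq d \in D$ with $d\,(1/s)^n \in D$ for every integer $n \geq 1$. Rewriting $d/s^n \in D$ as $d \in (s^n)$, I get $0 \neq d \in \bigcap_{n=1}^{\infty}(s^n)$. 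Since $s$ is a nonunit, this contradicts the Archimedean hypothesis $\bigcap_{n=1}^{\infty}(s^n) = (0)$. Hence every $s \in S$ is a unit, $D = D_S = D''$, and $D$ is completely integrally closed.

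The two points I would be careful about are (i) the translation of the almost-integrality of $1/s$ into the single membership chain $d \in (s^n)$ for all $n$, which is what reduces complete integral closure to a statement about powers of a \emph{principal} ideal, and (ii) the elementary but essential observation that a proper localization $D \subsetneq D_S$ must invert some nonunit of $D$. I do not expect a genuine obstacle here: the whole force of the proposition is that once $D'' = D_S$ is an honest ring of fractions, the (a priori global) condition of complete integral closure collapses onto the finitely many denominators in $S$, and the Archimedean condition is precisely strong enough to annihilate each such denominator. This is why the result subsumes Corollaries~\ref{R0}--\ref{R00}, where $D''$ is known to be a quotient ring of $D$.
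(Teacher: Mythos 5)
Your proposal is correct and follows essentially the same route as the paper's own proof: both reduce the converse to showing each $s \in S$ is a unit, by noting that $1/s \in D_S = D''$ forces $\bigcap_{n=1}^{\infty}(s^n) \neq (0)$, which the Archimedean hypothesis rules out for nonunits. The only difference is presentational — you phrase it as a contradiction and spell out the almost-integrality step $d(1/s)^n \in D \Leftrightarrow d \in (s^n)$, which the paper leaves implicit.
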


\begin{proof}
A completely integrally closed domain is always Archimedean.
Conversely, suppose that $D$ is Archimedean. Let $s \in S$. Then $1/s \in D_S = D''$;
so $\bigcap_{n = 1}^{\infty}(s^n) \neq (0)$.
Thus, $s$ is a unit of $D$ since $D$ is Archimedean; so $D = D_S = D''$ is completely integrally closed.
\end{proof}

We end this section with a slight generalization of Proposition~\ref{M} and Theorem~\ref{B}.

\begin{proposition} \label{X7}
Let $D$ be an essential domain. Then the complete integral closure
$D^{\prime \prime }$ of $D$ is the generalized ring of fractions $D_{S}$,
where $S = \{\, I\mid I \subseteq D$ is $v$-invertible and $\bigcap_{n = 1}^{\infty}(I^{n})_{v} \neq (0) \, \}$
is a multiplicative system of ideals.
\end{proposition}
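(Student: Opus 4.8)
The plan is to follow the proof of Proposition~\ref{M} almost verbatim, replacing the family $t\text{-Max}(D)$ of maximal $t$-ideals by the defining family $F$ of essential primes (so that $D=\bigcap_{P\in F}D_{P}$ with each $D_{P}$ a valuation domain) and replacing ``$t$-invertible'' by ``$v$-invertible'' throughout. As there, the argument splits into three parts: that $S$ is a multiplicative system of ideals, and the two inclusions $D_{S}\subseteq D''$ and $D''\subseteq D_{S}$. The first two parts require no new ideas. If $I,J\in S$, then $IJ$ is $v$-invertible (a product of $v$-invertible ideals is $v$-invertible), and choosing $0\neq x\in\bigcap_{n}(I^{n})_{v}$ and $0\neq y\in\bigcap_{n}(J^{n})_{v}$ gives $0\neq xy\in\bigcap_{n}((IJ)^{n})_{v}$ exactly as before, so $IJ\in S$. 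The inclusion $D_{S}\subseteq D''$ is domain-general: for $x\in D_{S}$ we have $xI\subseteq D$ for some $I\in S$, hence $x^{n}(I^{n})_{v}=((xI)^{n})_{v}\subseteq D$, and multiplying by a fixed $0\neq d\in\bigcap_{n}(I^{n})_{v}$ puts $x$ in $D''$.

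The real content is the inclusion $D''\subseteq D_{S}$. Given $a/b\in D''$, fix $0\neq d\in D$ with $d(a/b)^{n}\in D$, i.e. $d\in(b^{n}):(a^{n})$, for every $n\geq 1$. Since $(a/b)\big((b):(a)\big)\subseteq D$ automatically, it suffices to show that $I:=(b):(a)$ lies in $S$, that is, (i) $I$ is $v$-invertible and (ii) $\bigcap_{n}(I^{n})_{v}\neq(0)$. Writing $I=(1/a)(aD\cap bD)$, both reduce to facts about $aD\cap bD$. For (i) I would show $aD\cap bD$ is $v$-invertible by the local computation $(aD+bD)(aD\cap bD)D_{P}=abD_{P}$ for each $P\in F$ (in a valuation domain the two factors are comparable principal ideals whose product is $abD_{P}$); since $(aD+bD)(aD\cap bD)\subseteq abD$ and $D=\bigcap_{P\in F}D_{P}$, this forces $\big((aD+bD)(aD\cap bD)\big)_{v}=abD$, exhibiting the finite-type $v$-inverse $(1/ab)(aD+bD)$ of $aD\cap bD$. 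Hence $I$ is $v$-invertible, and so is every power $I^{n}$.

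For (ii) I would establish the identity $(b^{n}):(a^{n})=(I^{n})_{v}$ for every $n$, after which $0\neq d\in\bigcap_{n}\big((b^{n}):(a^{n})\big)=\bigcap_{n}(I^{n})_{v}$ finishes the proof and yields $D''=D_{S}$. Localizing at each $P\in F$, the colon $(b^{n}):(a^{n})$ commutes with the flat localization $D_{P}$ and, in the valuation domain $D_{P}$, equals $((b):(a))^{n}D_{P}=I^{n}D_{P}$; hence $(b^{n}):(a^{n})=\bigcap_{P\in F}I^{n}D_{P}$ is divisorial with the same essential localizations as $(I^{n})_{v}$. Because $I^{n}$ and $a^{n}D\cap b^{n}D$ are $v$-invertible (the latter by part (i) applied to $a^{n},b^{n}$), a cancellation in the group of $v$-invertible fractional ideals upgrades this local agreement to the global equality $(b^{n}):(a^{n})=(I^{n})_{v}$. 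Lemma~\ref{T} is then available if one prefers to phrase the conclusion in terms of powers, and the finish matches Theorem~\ref{B}.

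I expect the main obstacle to be exactly part (i) together with the globalization step in (ii). In Proposition~\ref{M} the identity $(b^{n}):(a^{n})=(((b):(a))^{n})_{v}$ came essentially for free from the $w$-operation formula $I_{w}=\bigcap_{M\in t\text{-Max}(D)}ID_{M}$ combined with $t$-invertibility in a PVMD, but an essential domain need not be a PVMD and carries no such $w$-identity tied to the family $F$. The substitute is the defining intersection $D=\bigcap_{P\in F}D_{P}$ plus the $v$-invertibility of $aD\cap bD$, which must be proved by hand and then used to cancel; once these are in place, everything else transcribes directly from the proof of Proposition~\ref{M}.
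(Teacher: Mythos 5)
Your proposal is correct and takes essentially the same route as the paper: the paper likewise transcribes the proof of Proposition~\ref{M} with the $w$-operation replaced by the star operation $\ast$ induced by $F$ (i.e., $I^{\ast} = \bigcap_{P \in F} ID_P$), and its only genuinely new ingredient is exactly your part (i), namely that $(b):(a) = a^{-1}((a) \cap (b))$ is $v$-invertible because $(((a) \cap (b))(a,b))D_P = (ab)D_P$ for every $P \in F$ forces $(((a) \cap (b))(a,b))_v = (ab)$. The one simplification worth noting is that your appeal to cancellation in step (ii) is unnecessary: since $(b^n):(a^n)$ is divisorial and equals $\bigcap_{P \in F} I^n D_P = (I^n)^{\ast}$, the inequality $\ast \leq v$ gives $(b^n):(a^n) = ((I^n)^{\ast})_v = (I^n)_v$ directly, which is how the paper (mirroring Proposition~\ref{M}) concludes.
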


\begin{proof}
Let $D$ be an essential domain with $F$ the set of  essential prime ideals
defining $D$, and let $\ast$ be the star operation induced on $D$ by $F$,
i.e., $I^{\ast} = \bigcap_{P \in F}ID_P$ for every $I \in F(D)$.

The proof is similar to the proof of Proposition~\ref{M}, but we replace the \mbox{$w$-operation}
by $\ast$ as defined in the above paragraph. Note that $(b) : (a) = a^{-1}((a) \cap (b))$ is  \mbox{$v$-invertible}
for every $ 0 \neq a,b \in D$ since $(a) \cap (b)$  is $v$-invertible
The last statement follows because $(((a) \cap (b))(a,b))D_P = (ab)D_P$ for every $P \in F$ since $D_P$ is a valuation domain,
and thus $(((a) \cap (b))(a,b))^{\ast} = (ab)$ implies $(((a) \cap (b))(a,b))_v = (ab)$ since $\ast \leq v$.

\end{proof}

\begin{corollary} \label{X8}
An essential domain $D$ is completely integrally closed if and only if
$\bigcap_{n = 1}^{\infty}(I^n)_v = (0)$ for every proper $v$-invertible $v$-ideal $I$ of $D$.
\end{corollary}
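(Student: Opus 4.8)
The plan is to mirror the structure of the proof of Theorem~\ref{B}, using Proposition~\ref{X7} in place of Proposition~\ref{M} as the engine, and replacing the $t$-invertibility/$w$-operation machinery with the $v$-invertibility/$\ast$-operation setup appropriate to an essential domain. First I would dispose of the forward direction: if $D$ is completely integrally closed, then by Corollary~\ref{L21} we already have $\bigcap_{n=1}^{\infty}(I^n)_v = (0)$ for \emph{every} ideal $I$ with $I_v \subsetneq D$, and in particular for every proper $v$-invertible $v$-ideal. So this direction is immediate and requires no essentiality hypothesis at all.

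For the converse, suppose $D$ is an essential domain satisfying $\bigcap_{n=1}^{\infty}(I^n)_v = (0)$ for every proper $v$-invertible $v$-ideal $I$. I would invoke Proposition~\ref{X7}, which identifies $D'' = D_S$ where $S = \{\, I \mid I \subseteq D \text{ is } v\text{-invertible and } \bigcap_{n=1}^{\infty}(I^n)_v \neq (0)\,\}$. The hypothesis says precisely that every proper $v$-invertible \emph{$v$-ideal} $I$ fails the condition $\bigcap_{n=1}^{\infty}(I^n)_v \neq (0)$, so no such proper $v$-ideal lies in $S$. The one point needing care is that $S$ as defined in Proposition~\ref{X7} is phrased in terms of $v$-invertible ideals $I$, not $v$-ideals; but since $\bigcap_{n=1}^{\infty}(I^n)_v$ depends only on the sequence of $v$-closures $(I^n)_v$, and a $v$-invertible ideal $I$ has $I_v$ a $v$-invertible $v$-ideal with $\bigcap_{n=1}^{\infty}(I^n)_v = \bigcap_{n=1}^{\infty}((I_v)^n)_v$, membership of $I$ in $S$ is equivalent to membership of the $v$-ideal $I_v$. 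Hence the hypothesis forces $S = \{D\}$, whence $D'' = D_S = D$ and $D$ is completely integrally closed.

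The main obstacle I anticipate is the bookkeeping in that last reduction: one must confirm that restricting attention to $v$-invertible $v$-ideals (as the statement does) genuinely captures all of $S$ (which is defined via arbitrary $v$-invertible ideals), so that the absence of proper members among $v$-ideals really does collapse $S$ to $\{D\}$ and not merely to the $v$-ideals in $S$. This is the step where the identity $\bigcap_{n=1}^{\infty}(I^n)_v = \bigcap_{n=1}^{\infty}((I_v)^n)_v$ and the fact that $(I_v)(I^{-1})$ is still $\ast$-invertible (equivalently $v$-invertible) do the work; none of it is deep, but it is the only place where the statement's phrasing differs from what Proposition~\ref{X7} hands us directly. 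Everything else is a direct transcription of the Theorem~\ref{B} argument into the essential-domain setting, and I would note explicitly that Corollary~\ref{X8} generalizes Theorem~\ref{B} exactly because a PVMD is an essential domain in which $t$-invertibility and $v$-invertibility coincide for finite-type ideals.
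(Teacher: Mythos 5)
Your proof is correct, but your converse direction takes a genuinely different route from the paper's. You transplant the proof of Theorem~\ref{B}: invoke Proposition~\ref{X7} to identify $D'' = D_S$, then show the hypothesis collapses $S$ so that $D_S = D$. The paper never uses the conclusion of Proposition~\ref{X7} in the converse; it only borrows from that proposition's proof the fact that the conductor $(a):(b)$ is $v$-invertible whenever $0 \neq a, b \in D$ with $(b) \nsubseteq (a)$. It then applies the hypothesis to these specific proper $v$-invertible $v$-ideals, uses the essential-domain identity $(((a):(b))^n)_v = (a^n):(b^n)$, deduces $\bigcap_{n=1}^{\infty}(a/b)^n = (0)$, i.e., that $D$ is strongly Archimedean, and finishes with Proposition~\ref{L2}. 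What each approach buys: the paper's route is more elementary and self-contained --- it needs only conductor ideals rather than the full identification $D'' = D_S$ (whose proof the paper only sketches), and it yields Corollary~\ref{X9} as an immediate byproduct; your route buys uniformity, since literally the same argument proves Theorem~\ref{B} and Corollary~\ref{X8}, at the cost of leaning on Proposition~\ref{X7} as a black box plus the bookkeeping you flagged. On that bookkeeping: your reduction is right that membership of $I$ in $S$ depends only on $I_v$ (since $(I^n)_v = ((I_v)^n)_v$ and $(I_v)^{-1} = I^{-1}$), but the correct conclusion is not literally $S = \{D\}$; rather, every $I \in S$ has $I_v = D$, hence $(D :_K I) = I^{-1} = D$ for every $I \in S$, and so $D_S = D$, which is all you need. (The paper commits the same harmless abbreviation in its proof of Theorem~\ref{B}, so this does not rise to the level of a gap.)
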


\begin{proof}
Let $D$ be completely integrally closed. Then $D_S = D'' = D$ by
Proposition \ref{X7}, and thus $\bigcap_{n = 1}^{\infty}(I^{n})_{v} = (0)$ for
every proper $v$-invertible $v$-ideal $I$ of $D$. Alternatively, use Corollary~\ref{L21}

Conversely, suppose that $\bigcap_{n = 1}^{\infty}(I^{n})_{v}=(0)$ for every
proper $v$-invertible $v$-ideal $I$ of $D$.
%and let $\ast$ be the star operation defined in the proof of Proposition~\ref{X7}.%
Then, in particular, we have that $(a):(b)$, where $0 \neq a, b \in D$ with $(b) \nsubseteq (a)$,
is \mbox{$v$-invertible} (see the proof of Proposition~\ref{X7}),
%because $(((a) \cap (b))(a,b))_{P_\alpha}} = (ab)_{P_\alpha}$,
%and thus $(((a) \cap (b))(a,b))^{\ast} = (ab)$ implies $(((a) \cap (b))(a,b))_v = (ab)$ since $\ast \leq v$,
and so $\bigcap_{n = 1}^{\infty}(((a):(b))^{n})_{v} = (0)$.
Since $D$ is an essential domain,
$(((a) :(b))^n )_v = (a^n) : (b^n) = ((a^n) : (b^n))_v$ for every integer $n \geq 1$; so
$\bigcap_{n = 1}^{\infty}((a^{n}):(b^{n}))_{v} = (0)$.
But, $(a^{n}):(b^{n}) = (a^{n}/b^{n}) \cap D$,
and so $(0) = \bigcap_{n = 1}^{\infty}((a^{n}/b^{n}) \cap D) =
(\bigcap_{n = 1}^{\infty}(a^{n}/b^{n})) \cap D$, which
forces $\bigcap_{n = 1}^{\infty}(a/b)^n = (0)$ for every
$a, b \in D$ with $(b) \nsubseteq (a)$.
Thus, $D$ is strongly Archimedean, and hence completely
integrally closed by Proposition~\ref{L2}.
\end{proof}

\begin{corollary} \label{X9}
An essential domain $D$ is completely integrally closed
if and only if $\bigcap_{n=1}^{\infty}(((a) : (b))^n)_v = (0)$ for every $0 \neq a, b \in D$ with $(b) \nsubseteq(a)$.
\end{corollary}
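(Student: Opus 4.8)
The plan is to read Corollary~\ref{X9} as the special case of Corollary~\ref{X8} obtained by running over only the proper $v$-invertible $v$-ideals of the shape $(a):(b)$, and to assemble the argument from pieces already established in the proofs of Corollary~\ref{X8} and Proposition~\ref{X7}. I would split the equivalence into its two implications.

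For the forward implication, suppose $D$ is completely integrally closed and fix $0 \neq a,b \in D$ with $(b) \nsubseteq (a)$. First I would note that, as an integral ideal, $(a):(b) = (a/b)D \cap D$ is the intersection of two divisorial fractional ideals and is therefore itself divisorial, so $((a):(b))_v = (a):(b)$. Since $(a):(b) = D$ would force $b \in (a)$, the hypothesis $(b) \nsubseteq (a)$ gives $((a):(b))_v = (a):(b) \subsetneq D$. Thus $(a):(b)$ is a proper divisorial ideal (and, by the computation in the proof of Proposition~\ref{X7}, $v$-invertible), so Corollary~\ref{L21} (equivalently Corollary~\ref{X8}) yields $\bigcap_{n=1}^{\infty}(((a):(b))^n)_v = (0)$, which is exactly the asserted condition.

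For the reverse implication, I would observe that the hypothesis of Corollary~\ref{X9} is precisely the intermediate statement reached partway through the proof of Corollary~\ref{X8}, so the remaining work is to replay the tail of that argument. Assuming $\bigcap_{n=1}^{\infty}(((a):(b))^n)_v = (0)$ for all such $a,b$, I would use the essential-domain identity $(((a):(b))^n)_v = (a^n):(b^n) = (a^n/b^n)D \cap D$ to rewrite the hypothesis as $(0) = \bigcap_{n=1}^{\infty}\big((a^n/b^n)D \cap D\big) = \big(\bigcap_{n=1}^{\infty}(a/b)^n\big) \cap D$. Because any nonzero $D$-submodule of $K$ meets $D$ nontrivially (clear a denominator), this forces $\bigcap_{n=1}^{\infty}(a/b)^n = (0)$ whenever $(b) \nsubseteq (a)$; that is, $D$ is strongly Archimedean, hence completely integrally closed by Proposition~\ref{L2}.

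The single step with genuine content is the essential-domain identity $(((a):(b))^n)_v = (a^n):(b^n)$, which is exactly where the essential hypothesis is used: passing to each defining localization $D_P$, which is a valuation domain, makes colons and powers of the principal ideals $(a)$ and $(b)$ behave multiplicatively, and one then transports the resulting equality of $\ast$-closures back to $v$-closures as in Proposition~\ref{X7}. I do not anticipate any serious obstacle here; apart from that identity the proof is purely a matter of citing the correct half of each earlier result.
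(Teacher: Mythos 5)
Your proposal is correct and follows essentially the same route as the paper: the paper proves Corollary~\ref{X9} by citing the proof of Corollary~\ref{X8}, whose forward half is exactly your appeal to Corollary~\ref{L21} applied to the proper divisorial ideal $(a):(b) = (a/b)D \cap D$, and whose reverse half is exactly your argument via the essential-domain identity $(((a):(b))^n)_v = (a^n):(b^n)$, the rewriting $(a^n):(b^n) = (a^n/b^n) \cap D$, and Proposition~\ref{L2}. Your justification of the key identity (localizing at the essential primes, where colons and powers of principal ideals multiply, then transporting the $\ast_F$-equality back to $v$ since $\ast_F \leq v$) is precisely the computation indicated in Proposition~\ref{X7}.
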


\begin{proof}
This follows from the proof of Corollary~\ref{X8} since in an essential domain,
$(((a) : (b))^n)_v = (a^n) :( b^n)$ for every integer $n \geq 1$.
\end{proof}

%%%%%%%%%%%%%%%%%%%%%%%%%%%%%%%%%%%%%%%%%%%%%%%%%%%%%%%
%%%%%%%%%%%%%%%%%%%%%%%%%%%%%%%%%%%%%%%%%%%%%%%%%%%%%%%
%%%%%%%%%%%%%%%%%%%%%%     Alternative approach    %%%%%%%%%%%%%%%%%%%%%
%%%%%%%%%%%%%%%%%%%%%%%%%%%%%%%%%%%%%%%%%%%%%%%%%%%%%%%
%%%%%%%%%%%%%%%%%%%%%%%%%%%%%%%%%%%%%%%%%%%%%%%%%%%%%%%

\section{Alternative approach}

\label{s:3}

The alternative approach is offered here not just to replicate the results
in the previous section, but actually to expand the scope of the study. We
use this approach, for instance, to provide some new characterizations of Krull
domains and their specializations such as UFDs, PIDs, locally factorial Krull
domains, and generalized Krull domains. We do this by
concentrating on integral domains whose maximal $t$-ideals are potent and mixing
them with complete integral closure.

Call a maximal $t$-ideal $P$ of an integral domain $D$ \emph{potent} if it contains a nonzero finitely
generated ideal that is not contained in any other maximal $t$-ideal.
Next, call a $v$-ideal $I$ of finite type a \emph{rigid ideal} if $I$ is
contained in one and only one maximal $t$-ideal. Thus, $P$ is potent if and only if it contains a rigid ideal.
Let us call a rigid ideal contained in a maximal $t$-ideal $P$ a \emph{$P$-ideal}.
It was shown in \cite[Theorem 1.1]{ACZ}
that if $D$ is of \emph{finite $t$-character}, i.e., every
nonzero nonunit belongs to only finitely many maximal $t$-ideals, then every
maximal $t$-ideal of $D$ is potent. Note that every rigid
ideal in a PVMD is a $t$-invertible $t$-ideal.
Also, recall that an integral domain $D$ is a \emph{ring of Krull type} (cf. \cite[page 537]{gilmer})
if $D$ is a locally finite intersection of essential valuation domains. Hence, a generalized Krull domain is a ring of Krull type.
A ring of Krull type is a PVMD, and thus integrally closed, but need not be completely integrally closed.

\begin{lemma}  \label{U}
Let $P$ be a potent maximal $t$-ideal of a PVMD $D$.

\emph{(1)} The set of all $P$-ideals of $D$ is totally ordered by inclusion.

\emph{(2)} If a $t$-invertible $t$-ideal $I$ of $D$
is contained in $P$, then $I$ is contained in a $P$-ideal of $D$.

\emph{(3)} For all $P$-ideals $I$ and $J$ of $D$, the ideal $(IJ)_{v}$ is a $P$-ideal of $D$.

\emph{(4)}  For every $P$-ideal $J$ of $D$, $\bigcap_{n = 1}^{\infty}(J^{n})_{v}$ is a prime ideal
of $D$ contained in $P$.

\emph{(5)} If $J$ is a $P$-ideal of $D$ and $Q$ is a prime ideal of $D$
contained in $P$ with $J \nsubseteq Q$, then $Q \subseteq
\bigcap_{n = 1}^{\infty}(J^{n})_{v}$.
\end{lemma}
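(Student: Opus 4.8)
The plan is to treat all five parts uniformly by localizing at the maximal $t$-ideals of $D$ and exploiting two features of a PVMD: $D_M$ is a valuation domain for every $M \in t$-Max$(D)$, and $t = w$, so that every $v$-ideal $H$ of finite type satisfies $H = H_w = \bigcap_{M \in t\text{-Max}(D)} HD_M$. Rigidity enters through the observation that if $H$ is a $P$-ideal, then $HD_M = D_M$ for every maximal $t$-ideal $M \neq P$ (because $H \not\subseteq M$), while $HD_P$ is a proper ideal of the valuation domain $D_P$, hence principal since $H$ is finitely generated. The first step is therefore to record, for a $P$-ideal $J$, the master formula
\[
(J^n)_v = (JD_P)^n \cap \bigcap_{M \neq P} D_M \qquad (n \geq 1),
\]
and, intersecting over $n$ and using $(J^n)_v \subseteq D$, the consequence
\[
\bigcap_{n=1}^{\infty}(J^n)_v = \Big(\bigcap_{n=1}^{\infty}(JD_P)^n\Big) \cap D,
\]
the contraction to $D$ of $\bigcap_n (JD_P)^n \subseteq D_P$. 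This formula is the engine for (4) and (5).

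For (1), I would write $I = \bigcap_M ID_M$ and $J = \bigcap_M JD_M$; for $M \neq P$ both localizations equal $D_M$, while in the valuation domain $D_P$ the ideals $ID_P$ and $JD_P$ are comparable, say $ID_P \subseteq JD_P$. Intersecting this inclusion with the common factor $\bigcap_{M \neq P} D_M$ gives $I \subseteq J$. For (2), let $J_0$ be any $P$-ideal (one exists since $P$ is potent) and set $K = (I + J_0)_v$; this is a $v$-ideal of finite type containing $I$, and a maximal $t$-ideal contains $K$ iff it contains both $I$ and $J_0$, which by rigidity of $J_0$ forces it to be $P$. Hence $K$ is a $P$-ideal. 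For (3), a maximal $t$-ideal $M$ contains $(IJ)_v$ iff $IJ \subseteq M$, and primeness of $M$ yields $I \subseteq M$ or $J \subseteq M$; either way $M = P$, so $(IJ)_v$ is a $P$-ideal.

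Parts (4) and (5) are where the real work lies. Writing $JD_P = (x)$ with $x$ a nonunit of $D_P$, the master formula identifies $\bigcap_n (J^n)_v$ with the contraction of $\bigcap_n (x^n) \subseteq D_P$. The latter is a prime ideal of the valuation domain $D_P$ by the standard argument: if $ab \in \bigcap_n (x^n)$ and $a \notin (x^m)$, then comparability gives $a \mid x^m$, and then $ab \in (x^{n+m})$ forces $b \in (x^n)$ for every $n$. Its contraction to $D$ is therefore prime and is contained in $J \subseteq P$, giving (4). For (5), given a prime $Q \subseteq P$ with $J \not\subseteq Q$, choose $a \in J \setminus Q$; since $Q = QD_P \cap D$ (as $Q \subseteq P$), we get $a \in (x) \setminus QD_P$, whence $x \notin QD_P$. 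For any $q \in QD_P$ and any $n$, comparability in $D_P$ gives $q \in (x^n)$ — otherwise $(x^n) \subseteq (q) \subseteq QD_P$ would put $x^n \in QD_P$, contradicting $x \notin QD_P$. Thus $QD_P \subseteq \bigcap_n (x^n)$, and contracting to $D$ yields $Q \subseteq \bigcap_n (J^n)_v$.

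I expect the main obstacle to be the careful bookkeeping in establishing the master formula, that is, verifying that the valuation-domain picture at $P$ can be transported back to $D$ through the $w$-operation (using $t = w$ and the rigidity fact $JD_M = D_M$ for $M \neq P$), together with the primality argument for $\bigcap_n (x^n)$ in $D_P$. Once the master formula is in hand, parts (4) and (5) reduce to routine facts about valuation domains, and parts (1)--(3) are comparatively direct.
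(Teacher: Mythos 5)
Your proposal is correct and takes essentially the same route as the paper: your ``master formula'' is precisely the identity $I = ID_P \cap D$ for $P$-ideals $I$ that the paper extracts from $I = I_w = \bigcap_{M \in t\text{-Max}(D)} ID_M$ together with rigidity, and all five parts then proceed by the same localize-at-$P$, valuation-comparability, and contraction arguments. The only differences are cosmetic, namely that you spell out the valuation-domain facts (primality of $\bigcap_{n=1}^{\infty}(x^n)$ and the passage from $QD_P \subseteq JD_P$ to $QD_P \subseteq (J^n)_vD_P$ for all $n$) that the paper cites as known or leaves terse.
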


\begin{proof}
(1) Let $I$ and $J$ be $P$-ideals of $D$. Then $I_{w} = \bigcap_{Q \in t\text{-Max}
(D)}ID_{Q} = D \cap ID_{P}$, and likewise, $J_{w} = D \cap JD_{P}$. Also, as
$I$ is a $t$-ideal, $I_{w} = I$
because $w\leq t$. Hence, $I = D \cap ID_{P}$, and similarly, $J = D \cap JD_{P}$.
Then, since $D$ is a PVMD and $P$ is a maximal $t$-ideal, $D_{P}$ is a
valuation domain; so $ID_{P} \subseteq JD_{P}$ or $JD_{P} \subseteq ID_{P}$, say
$ID_{P} \subseteq JD_{P}$. Thus, $I = D \cap ID_{P} \subseteq D \cap JD_{P}=J$.

(2) Because $P$ is potent, there is a $P$-ideal $J$ of $D$ contained in $P$. Then $(I + J)_{v}$
is a $P$-ideal of $D$ containing $I$.

(3) Obvious.

(4) Note that $J$ being a $t$-invertible $t$-ideal,
$JD_{P}$ is principal, and so $J^{n}D_{P}$ is principal. Also,
being a $t$-invertible $t$-ideal,
$(J^{n})_{v}D_{P} = (J^{n}D_{P})_{v} = J^{n}D_{P}$ because $J^{n}D_{P}$ is
principal. Now, as $D_{P}$ is a valuation domain, $\bigcap_{n = 1}^{\infty}J^{n}D_{P}
= \bigcap_{n = 1}^{\infty}(J^{n})_{v}D_{P}$ is a prime ideal of $D_P$, say
$QD_{P}$ for $Q \subseteq P$ a prime ideal of $D$. Thus, $Q = QD_{P} \cap D =
(\bigcap_{n = 1}^{\infty}(J^{n})_{v}D_{P}) \cap D = \bigcap_{n = 1}^{\infty}((J^{n})_{v}D_{P} \cap D)
= \bigcap_{n = 1}^{\infty}(J^{n})_{v}$. For the last equality, we used the fact that
$(J^n)_vD_P \cap D = (J^n)_v$ since $(J^n)_v$ is a $P$-ideal of $D$ and $ID_P \cap D = I$
for every $P$-ideal of $D$ as shown in the proof of (1).

(5) Indeed, if $J\nsubseteq Q$, then
$JD_{P}\nsubseteq QD_{P}$, and so $Q \subseteq QD_P \subseteq JD_{P}$ because $D_{P}$ is a
valuation domain. Thus, $Q \subseteq QD_P \subseteq (J^n)_vD_P$,
and hence $Q \subseteq \bigcap _{n = 1}^{\infty}(J^n)_v$ as in the proof of (4).
\end{proof}

These considerations immediately give the following result since a ring of Krull type is a PVMD.

\begin{proposition}  \label{V}
The following statements are equivalent for a ring $D$ of Krull type.

\emph{(1)} $D$ is completely integrally closed.

\emph{(2)} $\bigcap_{n = 1}^{\infty}(I^{n})_{v} = (0)$ for every proper $t$-invertible $t$-ideal $I$
of $D$.

\emph{(3)} $\bigcap_{n = 1}^{\infty}(J^{n})_{v} = (0)$ for every rigid
ideal $J$ of $D$.

\emph{(4)} Every maximal $t$-ideal of $D$ has height one.

\emph{(5)} $D$ is a locally finite
intersection of rank-one valuation domains.
\end{proposition}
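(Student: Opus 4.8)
The plan is to establish the five conditions equivalent through a single cycle of implications, with Theorem~\ref{B} and Lemma~\ref{U} doing the heavy lifting. Since a ring of Krull type is a PVMD, the equivalence (1) $\Leftrightarrow$ (2) is precisely Theorem~\ref{B} and needs nothing further. The step (2) $\Rightarrow$ (3) is immediate: as recorded just before the statement, every rigid ideal $J$ of a PVMD is a $t$-invertible $t$-ideal, and it is proper because it lies in its (unique) maximal $t$-ideal, so (2) applies to it directly. Thus it remains to run (3) $\Rightarrow$ (4) $\Rightarrow$ (5) $\Rightarrow$ (1), which closes the loop.

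The crux is (3) $\Rightarrow$ (4). Since a ring of Krull type has finite $t$-character, \cite[Theorem 1.1]{ACZ} guarantees that every maximal $t$-ideal $P$ is potent, hence contains a $P$-ideal. Assume toward a contradiction that some maximal $t$-ideal $P$ has height at least two, and choose a nonzero prime $Q$ with $(0) \subsetneq Q \subsetneq P$ and an element $x \in P \setminus Q$. Beginning from any $P$-ideal $J = (a_1, \dots, a_s)_v$, I would pass to $J' = (J + xD)_v = (a_1, \dots, a_s, x)_v$. This is again a $v$-ideal of finite type; it lies in $P$ because $J + xD \subseteq P$ gives $(J + xD)_t \subseteq P$; and it is contained in no maximal $t$-ideal other than $P$, since any such ideal containing $J'$ would contain $J$. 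Therefore $J'$ is a $P$-ideal with $x \in J' \setminus Q$, so $J' \nsubseteq Q$, and Lemma~\ref{U}(5) delivers $Q \subseteq \bigcap_{n=1}^{\infty}((J')^{n})_{v}$. By (3) the right-hand side is $(0)$, contradicting $Q \neq (0)$; hence every maximal $t$-ideal has height one.

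For (4) $\Rightarrow$ (5), I would localize at each maximal $t$-ideal $M$: the ring $D_M$ is a valuation domain (as $D$ is a PVMD), and because the primes of $D_M$ correspond to the primes of $D$ contained in $M$, the height-one hypothesis forces $D_M$ to have rank one. Since $D = \bigcap_{M \in t\text{-Max}(D)} D_M$ and this representation is locally finite (again by finite $t$-character), (5) follows. Finally, (5) $\Rightarrow$ (1) is immediate from the recorded facts that a rank-one valuation domain is completely integrally closed and that an intersection of completely integrally closed domains is completely integrally closed. I expect the construction in (3) $\Rightarrow$ (4) to be the only real obstacle: one must fabricate, from an arbitrary $P$-ideal, a rigid ideal that avoids the offending prime $Q$ so that Lemma~\ref{U}(5) becomes applicable; all remaining implications reduce to Theorem~\ref{B}, routine localization, and the cited behavior of complete integral closure.
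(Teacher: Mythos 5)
Your proposal is correct and takes essentially the same route as the paper's own proof: the same cycle (1) $\Rightarrow$ (2) $\Rightarrow$ (3) $\Rightarrow$ (4) $\Rightarrow$ (5) $\Rightarrow$ (1), with Theorem~\ref{B} giving (1) $\Leftrightarrow$ (2), and with (3) $\Rightarrow$ (4) handled exactly as in the paper by enlarging a $P$-ideal $J$ (whose existence comes from potency via finite $t$-character and \cite[Theorem 1.1]{ACZ}) to the rigid ideal $(J,x)_v$ avoiding $Q$ and then invoking Lemma~\ref{U}(5). The remaining implications (rigid ideals are proper $t$-invertible $t$-ideals; localization at height-one maximal $t$-ideals; intersection of rank-one valuation domains is completely integrally closed) also match the paper's argument step for step.
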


\begin{proof}
(1) $\Rightarrow$ (2) This follows from Theorem \ref{B} since a ring of Krull type is a PVMD, or use Corollary~\ref{L21}.

(2) $\Rightarrow$ (3) Obvious.

(3) $\Rightarrow$ (4) Let $P$ be a maximal $t$-ideal of $D$ and suppose
that there is a nonzero prime ideal $Q$ of $D$ contained in $P$ with $x \in
P \setminus Q$. By \cite[Theorem 1.1]{ACZ}, there exists a $P$-ideal $J$ of $D$. Then $L = (J,x)_{v}$ is a (rigid)
$P$-ideal of $D$ not contained in $Q$. Thus, by Lemma~\ref{U}(5), $Q \subseteq
\bigcap_{n = 1}^{\infty}(L^{n})_{v} = (0)$; so $Q = (0)$, a contradiction. Hence, every
maximal $t$-ideal of $D$ has height one.

(4) $\Rightarrow$ (5) A ring of Krull type is a locally finite intersection of
localizations at maximal $t$-ideals, and the localizations are valuation
domains. By (4), each of these localizations has rank one because every
maximal $t$-ideal has height one.

(5) $\Rightarrow$ (1) Since a rank-one valuation domain is completely integrally closed, $D$ is
an intersection of completely integrally closed domains, and thus is completely
integrally closed.
\end{proof}

The above result provides various characterizations
of generalized Krull domains since a generalized Krull domain is a ring of Krull type, most of them are well known. We now
prove some results that \emph{are} new. Yet, to facilitate the realization of those
results, we need to bring in some other notions and results.

Recall from \cite{AZ2} that a family $F$ of (nonzero)
prime ideals of an integral domain $D$ is called a \emph{defining family of primes for $D$} if
$D = \bigcap_{P \in F}D_P$.  If, further, every nonzero
nonunit of $D$ belongs to at most finitely many members of $F$, then $F$ is of
\emph{finite character}, and if no two members of $F$ contain a common nonzero
prime ideal, then $F$ is \emph{independent}. An integral domain $D$ is \emph{independent of
finite character} $F$ (or an \emph{F-IFC domain}) if it has a defining family $F$ of
prime ideals that is independent and of finite character. In \cite{AZ2}, we denoted
by $\ast_{F}$ the star operation induced on $D$ by the family
$\{D_P\}_{P \in F}$, i.e., $I^{\ast_F} = \bigcap_{P \in F}ID_P$ for every $I \in F(D)$.
We also called an integral ideal $I$ of $D$ \emph{$\ast_{F}$-unidirectional}
if $I$ belongs to a unique member of $F$. If $F$ consists of
all the maximal $t$-ideals of $D$, then $\ast_{F} = w$; and if $D$ is a PVMD,
then the rigid ideals are precisely the $w$-invertible unidirectional $w$-ideals
because a $t$-invertible $t$-ideal is a $w$-invertible $w$-ideal.
If $D$ is a GCD domain, then the rigid ideals are precisely the
ones generated by rigid elements of $D$ \cite[Lemma 1]{Zr2}. (For an integral domain $D$, $r \in D$ is said to be \emph{rigid} if $r | t$ and $s | t$ for $s, t \in D$ implies that either $r | s$ or $s | r$). While the unidirectional
ideals served a somewhat limited purpose in \cite{AZ2}, results proved in
\cite{AZ2} can have some interesting uses. One of the results that we intend
to use is the following.

\begin{theorem}  \label{W}
\emph{(\cite[Theorem 3.3]{AZ2})}
Let $F$ be a defining family of mutually incomparable prime ideals of an integral domain $D$
such that $\ast_{F}$ is of finite character. Then the following statements are
equivalent.

\emph{(1)} $F$ is independent of finite character.

\emph{(2)} Every nonzero
prime ideal of $D$ contains an element $x$ such that $(x)$ is a \mbox{$\ast_{F}$-product} of unidirectional $\ast_{F}$-ideals.

\emph{(3)} Every nonzero prime ideal
of $D$ contains a unidirectional $\ast_{F}$-invertible \mbox{$\ast_{F}$-ideal}.

\emph{(4)} For $P \in F$ and $0 \neq x \in P, xD_{P}\cap D$ is $\ast_{F}$-invertible and unidirectional.

\emph{(5)} $F$ is independent and for every nonzero
ideal $I$ of $D$, $I^{\ast_{F}}$ is of finite type whenever $ID_{P}$ is
finitely generated for every $P \in F$.
\end{theorem}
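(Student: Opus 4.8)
The plan is to prove the equivalences by running the cycle $(1)\Rightarrow(2)\Rightarrow(3)\Rightarrow(1)$ and attaching the remaining conditions by $(1)\Rightarrow(4)\Rightarrow(1)$ and $(1)\Leftrightarrow(5)$. The organizing principle is that condition $(1)$ is really two separate assertions — that $F$ is \emph{independent} and that $F$ is of \emph{finite character} — and that the unidirectional $\ast_F$-ideals are precisely the device that converts between the global family conditions $(1)$, $(5)$ and the local/ideal-theoretic conditions $(2)$, $(3)$, $(4)$. Throughout I would use the standing hypotheses freely: that $\ast_F$ is a star operation of finite character, that the members of $F$ are mutually incomparable, together with the elementary facts $I^{\ast_F}=\bigcap_{P\in F}ID_P$, that $PD_P\cap D=P$, and that $0\neq x$ is a nonunit of $D$ exactly when $x\in P$ for some $P\in F$.

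For the constructive step $(1)\Rightarrow(2)$, given a nonzero prime $Q$ I would pick a nonzero nonunit $x\in Q$; finite character of $F$ yields only finitely many members $P_1,\dots,P_n\in F$ containing $x$, and I would set $I_i=xD_{P_i}\cap D$. The heart of the step is the identity $(x)=(I_1\cdots I_n)^{\ast_F}$, which I would verify by localizing at each $P\in F$: for $P\notin\{P_1,\dots,P_n\}$ the element $x$ is a unit in $D_P$, so $I_iD_P=D_P$; independence forces $I_iD_P=D_P$ whenever $P\neq P_i$; and $I_iD_{P_i}=xD_{P_i}$. Hence every localization of the product agrees with that of $(x)$, and the same computation shows each $I_i$ is a $\ast_F$-ideal belonging to the single member $P_i$, i.e.\ is unidirectional. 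Then $(2)\Rightarrow(3)$ is short: $(x)$ is principal, hence $\ast_F$-invertible, so in the monoid of $\ast_F$-invertible $\ast_F$-ideals each factor $I_i$ is $\ast_F$-invertible; and since $Q$ is prime and contains $I_1\cdots I_n$, it contains some $I_j$, the required unidirectional $\ast_F$-invertible $\ast_F$-ideal. I would obtain $(1)\Rightarrow(4)$ in parallel, forming $J=xD_P\cap D$ for $P\in F$ and $0\neq x\in P$ and reading off unidirectionality and $\ast_F$-invertibility from the same localization bookkeeping, with finite character supplying finite type. For $(1)\Leftrightarrow(5)$ the independence clauses coincide, and the finite-type clause transfers by a standard glueing argument, an ideal $I$ with every $ID_P$ finitely generated having $I^{\ast_F}$ generated by generators read off the finitely many relevant localizations.

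To close the loop I would recover the two halves of $(1)$ from the local conditions. Independence is clean and comes straight from the unidirectionality in $(3)$ (equivalently $(4)$): if distinct members $P\neq P'$ of $F$ shared a common nonzero prime $Q$, then a unidirectional $\ast_F$-invertible ideal $A\subseteq Q$ furnished by $(3)$ would lie in both $P$ and $P'$, contradicting that $A$ belongs to a unique member of $F$. The same mechanism handles the independence clause of $(5)$.

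The main obstacle is recovering the \emph{finite character of the family} $F$ — that each nonzero nonunit lies in only finitely many members — from the purely ideal-theoretic hypotheses $(3)$/$(4)$, since, unlike independence, this finiteness cannot be seen from a single prime. My plan is to derive independence first and then obtain family finite character by combining it with the standing hypothesis that the star operation $\ast_F$ has finite character, via the principle that \emph{for an independent defining family the finite character of $\ast_F$ and of $F$ coincide}: arguing by contradiction, if some $x$ lay in infinitely many members $P_1,P_2,\dots$, I would use finite character of $\ast_F$ to trap $x$ inside $A^{\ast_F}$ for a finitely generated $A\subseteq (x)$, and then play the unidirectional $\ast_F$-invertible ideals guaranteed inside the $P_i$ against the incomparability of $F$ to force $A$ to meet infinitely many of the $P_i$, contradicting that finitely many generators can involve only finitely many members. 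Making this counting rigorous — and correctly treating an arbitrary nonzero prime $Q$ that need not itself belong to $F$ when passing between the conditions about $F$ and the conditions about all primes — is where the genuine work of the theorem lies.
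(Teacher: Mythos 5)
You should know at the outset that this paper contains no proof of Theorem \ref{W}: it is imported verbatim from \cite[Theorem 3.3]{AZ2} and used as a black box (e.g., in the proof of Theorem \ref{X}), so your proposal has to be judged on its own merits rather than against an in-paper argument. Judged that way, the easy steps are correct: $(2)\Rightarrow(3)$ (a $\ast_F$-factor of a principal ideal is $\ast_F$-invertible, and a prime containing $I_1\cdots I_n$ contains some $I_j$), the derivation of independence from (3) or (4), and the glueing argument for $(1)\Rightarrow(5)$. But both load-bearing claims in $(1)\Rightarrow(2)$ and $(1)\Rightarrow(4)$ are asserted rather than proved. The statement ``independence forces $I_iD_P=D_P$ whenever $P\neq P_i$'' (i.e., that $I_i=xD_{P_i}\cap D$ is unidirectional) is not localization bookkeeping; it requires a genuine lemma, e.g.: every prime minimal over the $(D\setminus P_i)$-saturated ideal $I_i$ lies inside $P_i$ (if $s\in Q\cap(D\setminus P_i)$ with $Q$ minimal over $I_i$, then $s^nu\in I_i$ for some $u\notin Q$, and saturation gives $u\in I_i\subseteq Q$, a contradiction), so a containment $I_i\subseteq P_j$ would produce a nonzero prime inside $P_i\cap P_j$, violating independence. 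Likewise ``$\ast_F$-invertibility \dots with finite character supplying finite type'' conflates $\ast_F$-finiteness with $\ast_F$-invertibility; to invert $J=xD_P\cap D$ one must actually exhibit elements of $J^{-1}$, e.g.\ $s/x$ where $s$ lies in $\bigcap_{P'\ni x,\,P'\neq P}\bigl(xD_{P'}\cap D\bigr)$ but $s\notin P$, and the existence of such an $s$ uses the finiteness of the set of members containing $x$ together with the unidirectionality just discussed.

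The more serious gap is the one you yourself flag as the main obstacle: recovering finite character of the family from (3), which your cycle $(1)\Rightarrow(2)\Rightarrow(3)\Rightarrow(1)$ makes unavoidable. Your proposed mechanism cannot work: ``trapping $x$ inside $A^{\ast_F}$ for a finitely generated $A\subseteq(x)$'' is vacuous, because $(x)$ is itself finitely generated, so finite character of $\ast_F$ yields nothing, and the subsequent counting is not an argument. What is actually needed is a Zorn/Cohen-type maximal-counterexample argument: show that a $\ast_F$-ideal maximal among those \emph{not} containing a finite product of unidirectional $\ast_F$-invertible $\ast_F$-ideals is prime (if $(C+(a))^{\ast_F}$ and $(C+(b))^{\ast_F}$ each contain such products, so does $C$ whenever $ab\in C$, using $\ast_F$-finiteness of $\ast_F$-invertible ideals to pass finitely generated test ideals up chains); since (3) forbids any nonzero prime of this kind, every principal ideal $(y)$ contains a product $I_1\cdots I_k$ of unidirectional $\ast_F$-invertible ideals, and then any member of $F$ containing $y$ contains some $I_j$ and is therefore the unique member attached to $I_j$, giving at most $k$ members. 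No such idea, nor any workable substitute, appears in your sketch. Finally, $(5)\Rightarrow(1)$ is not addressed at all (you only sketch $(1)\Rightarrow(5)$); deriving finite character from (5) needs its own construction of a locally finitely generated ideal whose $\ast_F$-closure is not of finite type. As it stands, your proposal establishes only the easy implications of the theorem.
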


\begin{theorem} \label{X}
A PVMD $D$ is a generalized Krull domain if and only if
$D$ is completely integrally closed and every maximal $t$-ideal of $D$ is potent.
\end{theorem}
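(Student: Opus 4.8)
The plan is to exploit the canonical representation $D = \bigcap_{M \in t\text{-Max}(D)} D_M$ of a PVMD and to extract the two defining features of a generalized Krull domain separately: that each $D_M$ has rank one (equivalently, each maximal $t$-ideal has height one), and that the intersection is locally finite (equivalently, $D$ has finite $t$-character). The forward implication is routine. If $D$ is a generalized Krull domain, then $D$ is a ring of Krull type whose defining essential valuation overrings are the $D_M$ and have rank one; since a rank-one valuation domain is completely integrally closed and an intersection of completely integrally closed domains is completely integrally closed, $D$ is completely integrally closed. Moreover, local finiteness of the intersection is exactly finite $t$-character, and by \cite[Theorem 1.1]{ACZ} finite $t$-character forces every maximal $t$-ideal to be potent.

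For the converse, assume $D$ is a completely integrally closed PVMD in which every maximal $t$-ideal is potent. First I would show that every maximal $t$-ideal $P$ has height one, essentially repeating the argument of Proposition~\ref{V}, (3)$\Rightarrow$(4): if $Q \subsetneq P$ were a nonzero prime, I would choose $x \in P \setminus Q$ and a rigid $P$-ideal $J$ (available since $P$ is potent), and set $L = (J,x)_v$, a rigid $P$-ideal with $x \in L \setminus Q$. Then Lemma~\ref{U}(5) gives $Q \subseteq \bigcap_{n=1}^\infty (L^n)_v$, while $L$ is a proper $t$-invertible $t$-ideal, so Corollary~\ref{L21} (or Theorem~\ref{B}) yields $\bigcap_{n=1}^\infty (L^n)_v = (0)$, forcing $Q = (0)$, a contradiction. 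Consequently each $D_M$ is a rank-one valuation domain, and distinct maximal $t$-ideals, being height-one primes, can share no nonzero prime, so the family $F = t\text{-Max}(D)$ is automatically independent.

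It remains to obtain finite $t$-character, and this is where the main obstacle lies, since potency is a priori strictly weaker than finite $t$-character. I would deduce it from \cite[Theorem 3.3]{AZ2} (Theorem~\ref{W}) applied to $F = t\text{-Max}(D)$, whose members are mutually incomparable and for which $\ast_F = w$ is of finite character. The condition I would verify is Theorem~\ref{W}(3): that every nonzero prime ideal contains a unidirectional $w$-invertible $w$-ideal, i.e., a rigid ideal. Given a nonzero prime $Q$, I would pick $0 \neq a \in Q$ and a prime $Q_0 \subseteq Q$ minimal over $(a)$; since a prime minimal over a principal ideal is a $t$-ideal, $Q_0$ lies in a maximal $t$-ideal, which by the height-one conclusion above must be $Q_0$ itself, so $Q_0$ is a potent maximal $t$-ideal and its rigid $Q_0$-ideal sits inside $Q$. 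Theorem~\ref{W} then gives that $F$ is independent of finite character, i.e., $D$ has finite $t$-character, whence $D = \bigcap_{M} D_M$ is a locally finite intersection of rank-one essential valuation domains and therefore a generalized Krull domain.

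The crux is precisely this reduction of finite $t$-character to Theorem~\ref{W}(3): potency supplies rigid ideals only inside maximal $t$-ideals, whereas Theorem~\ref{W}(3) demands one inside every nonzero prime. The step that requires care is thus the passage from an arbitrary nonzero prime to a height-one (hence maximal $t$-) prime below it, which rests on the standard fact that in a PVMD a prime minimal over a principal ideal is a $t$-ideal. Everything else is bookkeeping with Lemma~\ref{U} and complete integral closure via Corollary~\ref{L21} and Theorem~\ref{B}.
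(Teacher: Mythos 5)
Your proposal is correct and takes essentially the same route as the paper's proof: first deduce that every maximal $t$-ideal has height one from potency plus complete integral closure (Theorem~\ref{B} and the argument of Proposition~\ref{V}(3)$\Rightarrow$(4)), then obtain finite $t$-character by verifying Theorem~\ref{W}(3) for $F = t\text{-Max}(D)$ with $\ast_F = w$. The only difference is that you make explicit two steps the paper leaves terse, namely the ``obvious'' forward direction and the justification (via primes minimal over principal ideals being $t$-ideals) that every nonzero prime contains a potent maximal $t$-ideal.
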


\begin{proof}
Suppose that the PVMD $D$ is completely integrally closed and every maximal $t$-ideal of $D$ is potent.
We first show that every maximal $t$-ideal $P$ of $D$ has height one. Since $P$ is potent, it contains at least one
rigid ideal $J$. Since $D$ is completely integrally closed, $\bigcap_{n = 1}^{\infty}(I^{n})_{v} = (0)$ for
every $t$-invertible $t$-ideal $I$ of $D$ by Theorem~\ref{B}; in
particular, $\bigcap_{n = 1}^{\infty}(J^{n})_{v} = (0)$. As in the proof
of (3) $\Rightarrow $ (4) of Proposition \ref{V}, one can show that $P$ has
height one. Thus, every nonzero prime ideal of $D$ contains a maximal $t$-ideal.
Note that $\ast_F = w$, where $F = t$-Max$(D)$, and hence $\ast_F$ has finite character.
Since every maximal $t$-ideal is potent,
Theorem \ref{W}(3) holds. Thus, $F$ has finite character by Theorem~\ref{W}(1); so $D$ is a generalized Krull domain. The converse is obvious.
\end{proof}

Since a Pr\"{u}fer domain is a special case of a PVMD in which $t$-invertible is
simply invertible, we have the following corollary.

\begin{corollary}  \label{Y}
A Pr\"{u}fer domain $D$ is a generalized Krull domain if and
only if $D$ is completely integrally closed and every maximal ideal of $D$ is potent.
\end{corollary}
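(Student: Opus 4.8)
The plan is to obtain Corollary~\ref{Y} as the Pr\"{u}fer specialization of Theorem~\ref{X}; the only substantive point is to record that over a Pr\"{u}fer domain the $t$-operation collapses to the identity, after which every $t$-theoretic hypothesis and conclusion of Theorem~\ref{X} translates verbatim into its ordinary counterpart.

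First I would recall, as already noted in the paragraph preceding the corollary, that a Pr\"{u}fer domain $D$ is a PVMD: every nonzero finitely generated ideal of $D$ is invertible, hence $t$-invertible, so $(II^{-1})_{t} = D$ for every $I \in f(D)$. Thus Theorem~\ref{X} applies to $D$.

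Next I would verify that $t = d$ over a Pr\"{u}fer domain. Each nonzero finitely generated ideal $J$ is invertible and therefore divisorial of finite type, so $J = J_{v} = J_{t}$; consequently, for an arbitrary nonzero ideal $I$ one has $I_{t} = \bigcup \{ \, J_{v} \mid J \subseteq I, \ J \in f(D) \, \} = \bigcup \{ \, J \mid J \subseteq I, \ J \in f(D) \, \} = I$, the last equality because every element of $I$ lies in the finitely generated (indeed principal) subideal it generates. Hence every nonzero ideal of $D$ is a $t$-ideal. In particular, the maximal $t$-ideals of $D$ are exactly its maximal ideals, a potent maximal $t$-ideal is precisely a potent maximal ideal, and a $t$-invertible $t$-ideal is precisely an invertible ideal.

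With these identifications in place, Theorem~\ref{X} reads: a Pr\"{u}fer domain $D$ is a generalized Krull domain if and only if $D$ is completely integrally closed and every maximal ideal of $D$ is potent, which is exactly the asserted equivalence. There is essentially no obstacle here beyond the bookkeeping of replacing ``maximal $t$-ideal'' by ``maximal ideal'' and ``$t$-invertible'' by ``invertible''; no new argument is needed once $t = d$ has been recorded, so the corollary follows directly from Theorem~\ref{X}.
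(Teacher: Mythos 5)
Your proposal is correct and matches the paper's own route: the paper derives Corollary~\ref{Y} from Theorem~\ref{X} with the single observation that a Pr\"{u}fer domain is a PVMD in which $t$-invertible means invertible. Your extra verification that $t = d$ on a Pr\"{u}fer domain (via invertible ideals being divisorial) is a correct, slightly more detailed justification of that same observation.
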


A happy fallout from Theorem~\ref{X} that may not be termed as corollaries is the
following set of results. Note that the ``completely integrally closed'' hypothesis is needed in Corollary~\ref{ZA}
(let $D$ be a rank-two valuation domain with principal maximal ideal).

\begin{theorem}
\label{Z}Let $D$ be an integral domain that is not a field.

\emph{(1)} $D$ is a PID if and only if every maximal ideal $M$ of
$D$ is principal and $\bigcap_{n = 1}^{\infty}M^{n} = (0)$.

\emph{(2)} $D$ is a Dedekind domain if
and only if every maximal ideal $M$ of $D$ is invertible and
$\bigcap_{n = 1}^{\infty}M^{n} = (0)$.

\emph{(3)} $D$ is a UFD if and only if every maximal $t$-ideal $M$ of $%
D $ is principal and $\bigcap_{n = 1}^{\infty}M^{n} = (0)$.

\emph{(4)} $D$ is a locally factorial Krull
domain if and only if every maximal $t$-ideal $M$ of $D$ is invertible and
$\bigcap_{n = 1}^{\infty}M^{n}=(0)$.

\emph{(5)} $D$ is a Krull domain if and only if every maximal
$t$-ideal $M$ of $D$ is $t$-invertible and $\bigcap_{n = 1}^{\infty}(M^{n})_{v} = (0)$.
\end{theorem}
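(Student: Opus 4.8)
The plan is to prove (5) first as the central statement, using Theorem~\ref{X}, and then obtain (1)--(4) as specializations. For the nontrivial direction of (5), assume every maximal $t$-ideal $M$ is $t$-invertible with $\bigcap_{n=1}^{\infty}(M^n)_v=(0)$. The crux is to show $D_M$ is a DVR for every maximal $t$-ideal $M$; note that the PVMD property is \emph{not} assumed and must be produced. First, since $M$ is $t$-invertible, $(MM^{-1})_t=D$ forces $MM^{-1}\nsubseteq M$, so $MD_M$ is invertible, hence principal, say $MD_M=tD_M$ (this needs no valuation hypothesis). Next I would record the contraction identity $(M^n)_vD_M\cap D=(M^n)_v$: as $(M^n)_v$ is a $t$-ideal it is a $w$-ideal, so $(M^n)_v=\bigcap_{Q\in t\text{-Max}(D)}(M^n)_vD_Q$, and for $Q\neq M$ one has $M\nsubseteq Q$, whence $(M^n)_vD_Q=D_Q$; the identity then follows from $D=\bigcap_Q D_Q$. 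Now if $0\neq z\in\bigcap_n(MD_M)^n=\bigcap_n M^nD_M\subseteq\bigcap_n(M^n)_vD_M$, writing $z=y/s$ with $s\notin M$ gives $y\in(M^n)_vD_M\cap D=(M^n)_v$ for all $n$, so $y\in\bigcap_n(M^n)_v=(0)$, a contradiction. Thus $\bigcap_n(MD_M)^n=(0)$, and a local domain with principal maximal ideal and zero intersection of its powers is a DVR.

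Having each $D_M$ a DVR, Griffin's criterion \cite{Gr} gives that $D$ is a PVMD, and $D=\bigcap_M D_M$ is an intersection of completely integrally closed DVRs, hence completely integrally closed. Moreover each $t$-invertible maximal $t$-ideal $M$ is a $v$-ideal of finite type contained in the single maximal $t$-ideal $M$, i.e.\ it is a rigid ideal, so every maximal $t$-ideal is potent. Theorem~\ref{X} then makes $D$ a generalized Krull domain, and since each $D_M$ is discrete, $D$ is a locally finite intersection of DVRs, i.e.\ a Krull domain. The converse of (5) is routine: a Krull domain is a completely integrally closed PVMD whose maximal $t$-ideals are the $t$-invertible height-one primes, and $\bigcap_n(M^n)_v=(0)$ by Corollary~\ref{L21}.

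For (3) and (4) I would specialize (5). A principal (resp.\ invertible) ideal is $t$-invertible, so in each case (5) already yields that $D$ is a Krull domain, and $\bigcap_n M^n=\bigcap_n(M^n)_v=(0)$ since powers of principal (resp.\ invertible) ideals are divisorial. It then remains to upgrade: a Krull domain whose maximal $t$-ideals (its height-one primes) are all principal has trivial $t$-class group, hence is a UFD, giving (3); a Krull domain whose height-one primes are all invertible has every divisorial ideal invertible, hence is locally factorial, giving (4). The converses use that a UFD (resp.\ locally factorial Krull domain) is Krull with the stated property, together with Corollary~\ref{L21}.

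For (1) and (2) the hypotheses are on maximal \emph{ideals}. An invertible (in particular principal) maximal ideal is a $v$-ideal of finite type, hence a maximal $t$-ideal; conversely any maximal $t$-ideal lies in a maximal ideal, which is such a maximal $t$-ideal, forcing equality. Thus the maximal ideals coincide with the maximal $t$-ideals and are all principal (resp.\ invertible), so (3) (resp.\ (4)) applies to give a UFD (resp.\ locally factorial Krull domain); since every maximal ideal is now a height-one prime, $\dim D=1$, and a one-dimensional UFD is a PID while a one-dimensional locally factorial Krull domain is Dedekind, with the converses immediate. The main obstacle throughout is the localization step in (5): extracting the \emph{local} condition $\bigcap_n(MD_M)^n=(0)$ from the \emph{global} hypothesis $\bigcap_n(M^n)_v=(0)$, which cannot be done by naively commuting the infinite intersection with localization and instead rests on the $w$-ideal identity $(M^n)_vD_M\cap D=(M^n)_v$; a secondary subtlety is that the PVMD property is a conclusion here, obtained only after the DVR computation via Griffin's theorem.
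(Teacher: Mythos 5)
Your proof is correct, and its computational core for (5) is the same as the paper's: localize at a maximal $t$-ideal $M$, use $t$-invertibility to make $MD_M$ principal, and transfer the global hypothesis $\bigcap_{n=1}^{\infty}(M^n)_v=(0)$ to the local statement $\bigcap_{n=1}^{\infty}(MD_M)^n=(0)$ through the contraction identity $(M^n)_vD_M\cap D=(M^n)_v$ (the paper gets this step by pointing to the proof of Lemma~\ref{U}(4), where the same $w$-ideal argument appears). After that, you genuinely diverge. The paper concludes only that $M$ has height one, so that every prime $t$-ideal is a maximal $t$-ideal and hence $t$-invertible, and then obtains the Krull property in one stroke from the Houston--Zafrullah characterization \cite[Theorem 2.3]{HZ}; you instead upgrade each $D_M$ to a discrete rank-one valuation domain, invoke Griffin's theorem \cite{Gr} to produce the PVMD property, check complete integral closedness and potency, and pass through Theorem~\ref{X} plus discreteness of the localizations. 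Your route stays inside the paper's own machinery and makes the generalized-Krull structure explicit, but it is heavier, since Theorem~\ref{X} rests on Theorem~\ref{W}; the paper's single citation of \cite{HZ} is more economical. The second divergence is in (1) and (2), which the paper does not route through Krull theory at all: it observes that every nonzero prime ideal must then be invertible (resp., principal) --- if $(0)\neq P\subsetneq M$, then $P=PM$, so $P\subseteq\bigcap_{n=1}^{\infty}M^n=(0)$ --- and quotes the Zorn's-lemma exercises in Kaplansky \cite{Kap}; you instead fold (1) and (2) into (3) and (4) by showing that the maximal ideals coincide with the maximal $t$-ideals and then cutting the dimension to one. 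Your reduction is more uniform, with everything funneling through (5); the paper's argument keeps the PID/Dedekind cases elementary and independent of the rest of the theorem. Your treatment of (3) and (4) coincides with the paper's.
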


\begin{proof}
For each of (1) - (5), the ``$\Rightarrow$'' implication is well known and easy to prove.

(2)  (resp., (1)) ($\Leftarrow$) Here, $D$ is an integral domain
in which every nonzero prime ideal is invertible (resp., principal).
It is then well known (via a Zorn's Lemma argument, see \cite[Exercise 36, page 44]{Kap} (resp., \cite[Exercise 10, page 8]{Kap}))
that every nonzero ideal of $D$ is invertible (resp., principal).

(5) ($\Leftarrow$)  Let $M$ be a maximal $t$-ideal of $D$.
Then, by hypothesis, $M$ is $t$-invertible and $\bigcap_{n=1}^{\infty}(M^n)_v = (0)$.
Thus, $MD_M$ is principal; so $Q = \bigcap_{n=1}^{\infty}(MD_M)^n$ is a prime ideal of $D_M$.
As in the proof of Lemma~\ref{U}(4), $Q \cap D = \bigcap_{n=1}^{\infty}(M^n)_v$.
Hence, $Q \cap D = (0)$; so $Q = (0)$, and thus ht$(M) =$ ht$(MD_M) = 1$.
Hence, every maximal $t$-ideal of $D$ has height one and is $t$-invertible.
Thus, every prime $t$-ideal of $D$ is $t$-invertible;
so $D$ is a Krull domain \cite[Theorem 2.3]{HZ}.

(4) (resp., (3)) ($\Leftarrow$)  Note that invertible (resp., principal)
implies $t$-invertible, and so by (5) ($\Leftarrow$), $D$ is a Krull domain
in which every prime $t$-ideal is invertible (resp., principal),
and hence $D$ is locally factorial (resp., a UFD).
\end{proof}

\begin{corollary}  \label{ZA}
Let $D$ be a completely integrally closed PVMD that is not a
field.

\emph{(1)} $D$ is a PID (resp., Dedekind domain) if and only if every maximal ideal
of $D$ is principal (resp., invertible).

\emph{(2)} $D$ is a UFD (resp., locally factorial Krull domain,
Krull domain) if and only if every maximal $t$-ideal of $D$ is principal
(resp., invertible, $t$-invertible).
\end{corollary}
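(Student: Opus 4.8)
The plan is to deduce this corollary directly from Theorem~\ref{Z}, the point being that the hypothesis ``$D$ is a completely integrally closed PVMD'' supplies, free of charge, the Archimedean-type intersection conditions that appear in each part of Theorem~\ref{Z}. The forward (``$\Rightarrow$'') implications in both (1) and (2) are the standard structural facts (a PID, resp. Dedekind domain, has principal, resp. invertible, maximal ideals; a UFD, locally factorial Krull domain, and Krull domain have, respectively, principal, invertible, and $t$-invertible maximal $t$-ideals) and are already recorded in the ``$\Rightarrow$'' parts of Theorem~\ref{Z}. So I would concentrate entirely on the ``$\Leftarrow$'' directions.

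For part (2), I would start from the Krull case. If every maximal $t$-ideal $M$ of $D$ is $t$-invertible, then $M$ is a proper $t$-invertible $t$-ideal, so Theorem~\ref{B} (applicable since $D$ is a completely integrally closed PVMD) gives $\bigcap_{n=1}^{\infty}(M^{n})_{v} = (0)$; feeding this into Theorem~\ref{Z}(5) yields that $D$ is a Krull domain. For the UFD and locally factorial cases, note that ``principal'' and ``invertible'' both imply ``$t$-invertible'', so the same argument again gives $\bigcap_{n=1}^{\infty}(M^{n})_{v} = (0)$; moreover, when $M$ is principal (resp. invertible) each $M^{n}$ is again principal (resp. invertible) and hence divisorial, so $(M^{n})_{v} = M^{n}$ and therefore $\bigcap_{n=1}^{\infty}M^{n} = (0)$ as well. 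This is exactly the extra hypothesis needed to invoke Theorem~\ref{Z}(3) (resp. Theorem~\ref{Z}(4)), giving that $D$ is a UFD (resp. a locally factorial Krull domain).

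Part (1) I would handle the same way, except that here the relevant maximal ideals need not a priori be $t$-ideals. If every maximal ideal $M$ is invertible (resp. principal), then $M$ is a proper divisorial ideal of the completely integrally closed domain $D$, so Corollary~\ref{L21} gives $\bigcap_{n=1}^{\infty}(M^{n})_{v} = (0)$; since invertible (hence principal) powers satisfy $(M^{n})_{v} = M^{n}$, I again obtain $\bigcap_{n=1}^{\infty}M^{n} = (0)$, and then Theorem~\ref{Z}(2) (resp. Theorem~\ref{Z}(1)) shows $D$ is a Dedekind domain (resp. a PID).

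The only place requiring care, and where I expect the main (though modest) obstacle, is the bookkeeping between $\bigcap_{n=1}^{\infty}M^{n}$ and $\bigcap_{n=1}^{\infty}(M^{n})_{v}$: parts (1)--(4) of Theorem~\ref{Z} are phrased with the ordinary power intersection, whereas complete integral closure (via Theorem~\ref{B} or Corollary~\ref{L21}) most naturally controls the $v$-power intersection. The reconciliation rests on the fact that invertibility (a fortiori principality) of $M$ forces every $M^{n}$ to be divisorial, so the two intersections coincide; in the Krull case of part (2) no such reconciliation is needed, since Theorem~\ref{Z}(5) is already stated in terms of $(M^{n})_{v}$.
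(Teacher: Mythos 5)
Your proof is correct and is essentially the derivation the paper intends: Corollary~\ref{ZA} is stated as an immediate consequence of Theorem~\ref{Z}, with complete integral closure (via Theorem~\ref{B} or Corollary~\ref{L21}) supplying the intersection hypotheses, exactly as you argue. Your careful reconciliation of $\bigcap_{n=1}^{\infty}M^{n}$ with $\bigcap_{n=1}^{\infty}(M^{n})_{v}$ via the divisoriality of principal/invertible powers is precisely the bookkeeping needed to invoke parts (1)--(4) of Theorem~\ref{Z}, so there is no gap.
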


Now we return to the main theme of this work and prove a result with
reference to power series.

\begin{proposition} \label{ZB}
Let $D$ be a PVMD such that every maximal $t$-ideal of $D$ is potent and some
power of every integral $t$-invertible $t$-ideal is contained in a proper principal integral ideal.
Then the following statements are equivalent.

\emph{(1)}) $D[[X]]$ is integrally
closed.

\emph{(2)} $D$ is Archimedean.

\emph{(3)} $D$ is a generalized Krull domain in which some power of every proper integral
\mbox{$t$-invertible} $t$-ideal is contained in a proper principal integral ideal.

\emph{(4)} $D$ is completely integrally closed.
\end{proposition}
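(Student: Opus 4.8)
The plan is to prove the chain $(1) \Rightarrow (2) \Rightarrow (3) \Rightarrow (4) \Rightarrow (1)$, leaning heavily on the machinery already in place from Section~\ref{s:2} and Section~\ref{s:3}. The implication $(1) \Rightarrow (2)$ is essentially free: by \cite[Theorem 0.1]{Ohm} (or \cite[Theorem 13.10 and Proposition 13.11]{gilmer}), if $D[[X]]$ is integrally closed, then $D$ is integrally closed and Archimedean, and we simply discard the integral-closedness half. The implication $(4) \Rightarrow (1)$ is also immediate, since a completely integrally closed domain has $D[[X]]$ completely integrally closed (by the well-known fact quoted in the introduction), and completely integrally closed implies integrally closed.

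The heart of the argument is $(2) \Rightarrow (3)$ and the passage back from there to $(4)$. For $(2) \Rightarrow (3)$, I would first observe that the standing hypotheses of the proposition are exactly those of Proposition~\ref{Q}: every maximal $t$-ideal is potent (which is used to run the $P$-ideal machinery) and some power of every integral $t$-invertible $t$-ideal is contained in a proper principal integral ideal (which supplies the nonunit $d$ with $(I^k)_v \subseteq (d)$). Thus, assuming $D$ is Archimedean, Proposition~\ref{Q} yields that $D$ is completely integrally closed. Being a completely integrally closed PVMD in which every maximal $t$-ideal is potent, Theorem~\ref{X} then tells us that $D$ is a generalized Krull domain. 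The extra clause in statement~(3)---that some power of every proper integral $t$-invertible $t$-ideal lies in a proper principal integral ideal---is just a restatement of the standing hypothesis carried along, so it holds automatically. Hence $(2) \Rightarrow (3)$.

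The implication $(3) \Rightarrow (4)$ is the cleanest: a generalized Krull domain is a locally finite intersection of essential rank-one valuation domains, each rank-one valuation domain is completely integrally closed, and an intersection of completely integrally closed domains is completely integrally closed. So $D$ is completely integrally closed, giving $(4)$; the principal-ideal clause plays no role here and is simply dropped.

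The step I expect to require the most care is $(2) \Rightarrow (3)$, specifically making sure the hypotheses of the proposition line up precisely with those demanded by Proposition~\ref{Q}. Proposition~\ref{Q} is phrased in terms of finitely generated ideals $(a_1,\ldots,a_s)$ whose $v$-closure is proper, and uses the identity $(a_1^k,\ldots,a_s^k)_v = ((a_1,\ldots,a_s)^k)_v$ valid in a PVMD; I would want to check that ``some power of every integral $t$-invertible $t$-ideal is contained in a proper principal integral ideal'' genuinely delivers, for each such finitely generated ideal generating a proper $t$-invertible $t$-ideal, the nonunit $d$ and exponent $k$ that Proposition~\ref{Q} needs. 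Once that matching is confirmed, the rest of the cycle is a routine assembly of already-proved results, and no fresh computation is required.
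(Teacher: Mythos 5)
Your proposal is correct and is essentially the paper's own argument: the paper proves (1)$\Rightarrow$(2) via Ohm, (2)$\Rightarrow$(4) by combining Lemma~\ref{T}(2) with Theorem~\ref{B} (which is exactly the content of Proposition~\ref{Q} that you invoke), (4)$\Rightarrow$(3) via Theorem~\ref{X}, (3)$\Rightarrow$(2) from the fact that a generalized Krull domain is completely integrally closed, and (4)$\Rightarrow$(1) just as you do, so your rearrangement into the cycle (1)$\Rightarrow$(2)$\Rightarrow$(3)$\Rightarrow$(4)$\Rightarrow$(1) is only cosmetic. The hypothesis-matching you flagged does go through trivially, since for $(a_1,\ldots,a_s)$ with $(a_1,\ldots,a_s)_v \neq D$ the ideal $I = (a_1,\ldots,a_s)_t$ is a proper integral $t$-invertible $t$-ideal and $(a_1^k,\ldots,a_s^k) \subseteq I^k \subseteq (d)$; your one slip---saying Proposition~\ref{Q}'s hypotheses include potency of maximal $t$-ideals---is harmless, because Proposition~\ref{Q} needs only the power-containment condition, potency being what Theorem~\ref{X} consumes.
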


\begin{proof}
(1) $\Rightarrow$ (2) This follows from \cite[Theorem 0.2]{Ohm} (or \cite[Proposition 3.11]{gilmer}).

(2) $\Rightarrow$ (4) This follows from Lemma~\ref{T} and Theorem~\ref{B}.

(4) $\Rightarrow$ (3) This follows from Theorem~\ref{X}.

(3) $\Rightarrow$ (2)  A generalized Krull domain is completely integrally closed, and hence Archimedean.

(4) $\Rightarrow$ (1) $D$ completely integrally closed implies that $D[[X]]$
is completely integrally closed, and thus integrally closed.
\end{proof}

\begin{corollary}  \label{ZC}
Let $D$ be a GCD domain such that every maximal $t$-ideal of $D$ is potent. If
$D[[X]]$ is integrally closed, then $D$ is a locally finite intersection of
(essential) valuation domains.
\end{corollary}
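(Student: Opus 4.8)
The plan is to reduce the statement to a direct application of Proposition~\ref{ZB}, the only real task being to verify that a GCD domain automatically satisfies the extra ``principal-containment'' hypothesis appearing there. Observe first that $D$, being a GCD domain, is a PVMD, and by hypothesis every maximal $t$-ideal of $D$ is potent; these are two of the three standing hypotheses of Proposition~\ref{ZB}.

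Next I would establish the remaining hypothesis. Recall that in a GCD domain $I_v$ is principal for every $I \in f(D)$. Since any $t$-invertible $t$-ideal is a $v$-ideal of finite type, every proper integral $t$-invertible $t$-ideal $I$ of $D$ is therefore of the form $I = (a)$ for some nonunit $a \in D$. In particular $I = (a) \subseteq (a)$, so already the first power of $I$ is contained in a proper principal integral ideal. Hence the hypothesis ``some power of every proper integral $t$-invertible $t$-ideal is contained in a proper principal integral ideal'' of Proposition~\ref{ZB} holds for $D$ with no further work.

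With these observations, $D$ satisfies all the standing hypotheses of Proposition~\ref{ZB}. Since $D[[X]]$ is integrally closed by assumption, statement~(1) of that proposition holds, and therefore statement~(3) holds as well: $D$ is a generalized Krull domain. By definition a generalized Krull domain is a locally finite intersection of essential rank-one valuation domains, which is in particular a locally finite intersection of essential valuation domains, giving the desired conclusion.

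The argument is essentially a corollary of Proposition~\ref{ZB}, so there is no serious obstacle; the one point requiring care is the verification that GCD domains satisfy the principal-containment condition, which is exactly where the GCD hypothesis (every finite-type $v$-ideal is principal) is used. Alternatively, one could bypass Proposition~\ref{ZB} and argue directly: the integral closedness of $D[[X]]$ forces $D$ to be Archimedean by \cite[Theorem 0.1]{Ohm}, and then Lemma~\ref{T} together with Theorem~\ref{B} yields that $D$ is completely integrally closed, after which Theorem~\ref{X} identifies $D$ as a generalized Krull domain and hence as a locally finite intersection of essential valuation domains.
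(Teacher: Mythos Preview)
Your proof is correct and follows exactly the intended approach: the paper states Corollary~\ref{ZC} without proof as an immediate consequence of Proposition~\ref{ZB}, and your verification that a GCD domain satisfies the principal-containment hypothesis (since every $t$-invertible $t$-ideal is already principal) is precisely the missing detail one must supply.
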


\begin{corollary}  \label{ZD}
\emph{(cf.  \cite[Corollary 1.9]{Ohm})}
Let $D$ be an integral domain that is a finite intersection of valuation
domains. If $D[[X]]$ is integrally closed, then $D$ is a finite
intersection of rank-one valuation domains, and hence is a one-dimensional Bezout domain.
\end{corollary}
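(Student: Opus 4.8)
The plan is to show first that the hypothesis forces $D$ to be a semilocal Bezout domain, then to feed this into Proposition~\ref{ZB} to upgrade integral closedness of $D[[X]]$ to complete integral closure, from which the rank-one conclusion drops out. The one genuinely delicate point is this opening structural step; the rest is a matter of checking that the standing hypotheses of Proposition~\ref{ZB} are met and then reading off the conclusion.

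First I would reduce the given representation $D = V_1 \cap \cdots \cap V_n$ to an irredundant one and recall the standard fact (\cite[Section 26]{gilmer}) that a finite irredundant intersection of valuation overrings is a Pr\"{u}fer domain whose finitely many maximal ideals $M_1, \ldots, M_m$ are precisely the centers of the surviving valuation domains, with $D_{M_i} = V_i$. Being a semilocal Pr\"{u}fer domain, $D$ is in fact a Bezout domain; in particular it is a PVMD in which every finitely generated ideal, and hence every $t$-invertible $t$-ideal (these all being of finite type), is principal. Moreover $D$ has finite $t$-character, since a nonzero nonunit lies in $M_i$ exactly when it is a nonunit of $V_i$ and so belongs to at most $m$ maximal $t$-ideals. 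By \cite[Theorem 1.1]{ACZ}, every maximal $t$-ideal of $D$ is therefore potent.

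Next I would verify the two standing hypotheses of Proposition~\ref{ZB}. Potency has just been established. For the power condition, observe that since $D$ is Bezout every proper integral $t$-invertible $t$-ideal $I$ equals $(d)$ for some nonunit $d \in D$, so already $I = I^1 \subseteq (d)$ sits inside a proper principal integral ideal. With both hypotheses in hand, the assumption that $D[[X]]$ is integrally closed lets me invoke Proposition~\ref{ZB}: it yields that $D$ is Archimedean and, in turn, completely integrally closed (indeed a generalized Krull domain).

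Finally, I would convert complete integral closure into the stated geometric conclusion. As a finite intersection of essential valuation domains, $D$ is a ring of Krull type, so Proposition~\ref{V} applies: a completely integrally closed ring of Krull type has every maximal $t$-ideal of height one and is a locally finite intersection of rank-one valuation domains. Since here the intersection is finite, each $D_{M_i} = V_i$ has rank one, exhibiting $D$ as a \emph{finite} intersection of rank-one valuation domains. This forces $\dim D = 1$, and as $D$ is already known to be semilocal Pr\"{u}fer, hence Bezout, it follows that $D$ is a one-dimensional Bezout domain, completing the proof. I expect the main obstacle to be justifying the first paragraph cleanly, i.e.\ that a finite intersection of valuation overrings is a Bezout domain whose constituent valuation rings are its localizations at the finitely many maximal ideals; everything after that is a direct application of Propositions~\ref{ZB} and~\ref{V}.
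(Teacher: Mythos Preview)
Your proof is correct and follows essentially the approach implicit in the paper, which states this result as a corollary to Proposition~\ref{ZB} without a separate proof: a finite irredundant intersection of valuation domains with common quotient field is a semilocal Bezout domain, hence a PVMD with potent maximal $t$-ideals in which every $t$-invertible $t$-ideal is already principal, so Proposition~\ref{ZB} applies directly. The only cosmetic difference is that you route through Proposition~\ref{V} at the end, whereas one can simply read off ``generalized Krull domain'' from Proposition~\ref{ZB}(3) and observe that the finitely many essential valuation overrings must then have rank one; either way the argument is the same.
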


Generalized Krull domains do not behave as well as Krull
domains in at least one respect. While $D$ is a Krull domain implies that $D[[X]]$ is a Krull domain,
$D$ is a generalized Krull domain implies that $D[[X]]$ is a generalized Krull domain
only when $D$ is a Krull domain. This result of \cite[Theorem 2.5]{PT}
shows that $D$ is a Krull domain when $D[[X]]$ is a generalized Krull domain.
So it does not matter whether we assume that $D[[X]]$ is a generalized Krull domain
as defined by Griffin or defined by El Baghdadi (which we call a Krull-like PVMD), $D$ has to be a Krull domain.

%%%%%%%%%%%%%%%%%%%%%%%%%%%%%%%%%%%%%%%%%%%%%%%%%%
%%%%%%%%%%%%%%%%%%%%%%%%%%%%%%%%%%%%%%%%%%%%%%%%%%
%%%%%%%%%%%%%%%%%%%%%          URDs    %%%%%%%%%%%%%%%%%%%%
%%%%%%%%%%%%%%%%%%%%%%%%%%%%%%%%%%%%%%%%%%%%%%%%%%
%%%%%%%%%%%%%%%%%%%%%%%%%%%%%%%%%%%%%%%%%%%%%%%%%%

\section{Unique representation domains}    \label{s:4}

In the absence of a clear answer to the two conjectures for PVMDs in general, we look for
special cases, as we have done above. One special case is when a PVMD is a
unique representation domain. A \emph{packet} of an integral domain $D$ is a
$t$-invertible $t$-ideal of $D$ having prime radical.
Then $D$ is called a \emph{unique representation domain} (\emph{URD}) \cite{EGZ} if every proper
$t$-invertible $t$-ideal of $D$ can be uniquely expressed as a $t$-product of
pairwise \mbox{$t$-comaximal} packets. (In \cite{Zr3}, the term URD was used in the more
restricitive sense as a GCD domain that is also a URD.)
We note that for $D$ a PVMD  and $I$ a proper $t$-invertible $t$-ideal of $D$,
if $I$ is a $t$-product of a finite number of packets,
then $I$ can be uniquely expressed as a $t$-product of a finite number
of pairwise \mbox{$t$-comaximal} packets \cite[Theorem 1.1]{EGZ} and if $I= (I_1 \cdots I_n)_t$
is such an expression, then $I = I_1 \cap \cdots \cap I_n$, and that
$I$ has such a representation precisely when $I$ has only finitely many
miniml prime ideals \cite[Theorem 1.2]{EGZ}. Moreover, a PVMD is a URD if and only
if every nonzero principal ideal has only finitely many minimal prime ideals \cite[Theorem, 1.9]{EGZ}.
It may be hoped that this approach will come
in handy if it may look hard to decide on the potency of maximal $t$-ideals,
but there is this finiteness condition. It is well known that a minimal
prime ideal $P$ of a principal ideal $(x)$ is a prime $t$-ideal, and so $D_{P}$ is a
valuation domain when $D$ is a PVMD.

\begin{lemma}  \label{ZE}
Let $D$ be a PVMD and $I$ a packet of $D$ with unique minimal prime ideal $P$.
Then $\bigcap_{n = 1}^{\infty}(I^n)_v = (0)$ if and only if $P$ has height one.
\end{lemma}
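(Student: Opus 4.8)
The plan is to pass to valuation-domain localizations, where intersections of powers of a principal ideal are transparent, and then to contract the computation back to $D$.

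First I would verify that $P = \sqrt{I}$ is a prime $t$-ideal. Since $I$ is a $t$-invertible $t$-ideal it is divisorial, so $I = I_v$. Given $x \in P_t$, write $x \in F_v$ with $F \subseteq P$ finitely generated; as each generator of $F$ lies in $\sqrt{I}$, some power satisfies $F^k \subseteq I$, whence $x^k \in (F_v)^k \subseteq ((F_v)^k)_v = (F^k)_v \subseteq I_v = I \subseteq P$, so $x \in P$. Thus $P_t = P$. As $D$ is a PVMD, $D_P$ is then a valuation domain whose rank equals $\mathrm{ht}(P)$, and for every maximal $t$-ideal $M \supseteq P$ the localization $D_M$ is a valuation domain in which $ID_M$ is principal, say $ID_M = b_M D_M$, with $\sqrt{b_M D_M} = (\sqrt{I})D_M = PD_M$.

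The computational engine is the identity $(I^n)_v = (I^n)_w = \bigcap_{M \in t\text{-Max}(D)} I^n D_M$, valid because $t = w$ on the PVMD $D$ and $I^n$ is of finite type; note that $I^n D_M = D_M$ whenever $M \not\supseteq P$ (since then $I \not\subseteq M$) and $I^n D_M = b_M^n D_M$ when $M \supseteq P$. In a valuation domain, $\bigcap_{n} b^n V$ is the largest prime not containing $b$, exactly as in the proof of Lemma~\ref{U}(4). If $\mathrm{ht}(P) = 1$, then $D_P$ has rank one, so $\bigcap_n (ID_P)^n = (0)$; since $(I^n)_v \subseteq I^n D_{M_0} \subseteq I^n D_P = (ID_P)^n$ for any fixed $M_0 \supseteq P$, this forces $\bigcap_{n=1}^{\infty} (I^n)_v = (0)$.

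For the converse I would argue contrapositively. If $\mathrm{ht}(P) \geq 2$, choose a prime $(0) \neq P' \subsetneq P$ and show that $P' \subseteq (I^n)_v$ for every $n$. For $M \not\supseteq P$ this is clear since $I^n D_M = D_M$. For $M \supseteq P$ we have $P'D_M \subsetneq PD_M = \sqrt{b_M D_M}$, so $b_M \notin P'D_M$; hence the prime $P'D_M$ of the valuation domain $D_M$ does not contain $b_M$, and therefore $P' \subseteq P'D_M \subseteq b_M^n D_M = I^n D_M$. Intersecting over all $M$ gives $P' \subseteq (I^n)_v$ for every $n$, so $\bigcap_{n=1}^{\infty} (I^n)_v \supseteq P' \neq (0)$. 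Combining the two implications yields the equivalence. The main obstacle is organizational rather than deep: because $P$ need not be a maximal $t$-ideal, it may be contained in several maximal $t$-ideals, so one cannot simply localize at $P$ and contract as in Lemma~\ref{U}(4). The nonzero direction must instead produce a \emph{single} nonzero prime $P' \subsetneq P$ that survives in $I^n D_M$ simultaneously for every $M \supseteq P$; isolating this $P'$ and checking the radical identity $\sqrt{b_M D_M} = PD_M$ at each such $M$ is the step demanding the most care.
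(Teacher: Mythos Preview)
Your proof is correct and follows essentially the same route as the paper's: both directions pass to the valuation localizations $D_M$, use the identity $(I^n)_v=(I^n)_w=\bigcap_{M\in t\text{-}\mathrm{Max}(D)}I^nD_M$, and exploit that in a valuation domain $\bigcap_n b^nV$ is the largest prime not containing $b$. The only cosmetic difference is that in the converse the paper identifies $\bigcap_n(I^n)_v$ exactly as the unique prime $Q$ directly below $P$, whereas you more simply show that any nonzero prime $P'\subsetneq P$ lies inside the intersection; your version is slightly cleaner and avoids checking that the prime ``directly below $P$'' is independent of the choice of $M$.
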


\begin{proof}
($\Leftarrow$) Suppose that ht$(P) = 1$. Let $M \supseteq P$ be a maximal $t$-ideal of $D$.
Thus, $ID_M$ is a principal ideal of the valuation domain $D_M$ and $ID_M \subseteq PD_M$,
where ht$(PD_M) = 1$. Then $(0) = \bigcap_{n = 1}^{\infty}(ID_M)^n  =
\bigcap_{n = 1}^{\infty}(I^nD_M) = \bigcap_{n = 1}^{\infty}(I^n)_vD_M \supseteq \bigcap_{n = 1}^{\infty}(I^n)_v$;
so $\bigcap_{n = 1}^{\infty}(I^n)_v = (0)$.

($\Rightarrow$)
Suppose that ht$(P) > 1$. Let $M \supseteq P$ be a maximal $t$-ideal of $D$.
Now, $(I^n)_vD_M = I^nD_M = (ID_M)^n$ for every integer $n \geq 1$ and $\sqrt{ID_M} = PD_M$. Since $D_M$
is a valuation domain, $Q' = \bigcap_{n = 1}^{\infty}(I^n)_vD_M =
\bigcap_{n = 1}^{\infty}(ID_M)^n$ is a prime ideal of $D_M$;
in fact, $Q'$ is the unique prime ideal directly below $PD_M$.
Thus, $Q' = QD_M$, where $Q$ is the unique prime ideal of $D$ directly below $P$.
So ht$(P) > 1$ implies $Q \neq (0)$. Suppose that $N$ is a maximal
$t$-ideal of $D$ with $N \not \supset P$; so $(I^n)_vD_N = D_N$.
Hence, $QD_N \subseteq \bigcap_{n = 1}^{\infty}(I^n)_vD_N$.
Thus, $Q = \bigcap_{M \in t\text{-Max}
(D)}QD_M =  \bigcap_{M \in t\text{-Max}
(D)}(\bigcap_{n = 1}^{\infty}(I^n)_vD_M)  =
\bigcap_{n = 1}^{\infty}(\bigcap_{M \in t\text{-Max}(D)}(I^n)_vD_M)) =
\bigcap_{n = 1}^{\infty}(I^n)_v = (0)$, a contradiction.
\end{proof}

\begin{corollary}  \label{ZF}
Let $D$ be a PVMD, and let $I$ be a proper $t$-invertible $t$-ideal of $D$ such that
$I$ has only a finite number of minimal prime ideals $P_1, \ldots, P_m$. Then
$\bigcap_{n = 1}^{\infty}(I^n)_v = (0)$ if and only if some $P_i$ has height one.
\end{corollary}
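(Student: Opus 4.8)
The plan is to reduce the statement to the single-packet case already handled in Lemma~\ref{ZE}. Since $I$ is a proper $t$-invertible $t$-ideal of the PVMD $D$ with only finitely many minimal primes $P_1, \ldots, P_m$, by \cite[Theorems 1.1 and 1.2]{EGZ} I may write $I = (I_1 \cdots I_m)_t = I_1 \cap \cdots \cap I_m$, where $I_1, \ldots, I_m$ are pairwise $t$-comaximal packets with $\sqrt{I_j} = P_j$ (so $P_1, \ldots, P_m$ are exactly the minimal primes of $I$, matching the hypothesis). Each $I_j$ is a packet with unique minimal prime $P_j$, so Lemma~\ref{ZE} tells us that $\bigcap_{n=1}^{\infty}(I_j^n)_v = (0)$ precisely when $\operatorname{ht}(P_j) = 1$, and otherwise $\bigcap_{n=1}^{\infty}(I_j^n)_v$ is a nonzero ideal of $D$. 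The whole corollary then follows once I relate $\bigcap_n (I^n)_v$ to the individual $\bigcap_n (I_j^n)_v$.

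For the implication ``some $P_i$ has height one $\Rightarrow \bigcap_n (I^n)_v = (0)$'' I would argue directly: from $I = I_1 \cap \cdots \cap I_m$ I have $I \subseteq I_i$, so Lemma~\ref{T}(1) gives $\bigcap_{n=1}^{\infty}(I^n)_v \subseteq \bigcap_{n=1}^{\infty}(I_i^n)_v$, and the right-hand side is $(0)$ by Lemma~\ref{ZE} since $\operatorname{ht}(P_i) = 1$. This direction needs nothing beyond the intersection representation and monotonicity.

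For the converse I would establish the identity
\[
(I^n)_v = (I_1^n)_v \cap \cdots \cap (I_m^n)_v \qquad \text{for every } n \geq 1.
\]
Indeed $(I^n)_v = (I^n)_t = ((I_1 \cdots I_m)^n)_t = (I_1^n \cdots I_m^n)_t$, and since the $(I_j^n)_v$ are again pairwise $t$-comaximal packets (with radical $P_j$), \cite[Theorems 1.1 and 1.2]{EGZ} turn this $t$-product into the intersection $(I_1^n)_v \cap \cdots \cap (I_m^n)_v$. (Alternatively one checks the identity locally: both sides are $w$-ideals of the PVMD $D$, and at each maximal $t$-ideal $M$ at most one $I_j$ fails to expand to $D_M$ by $t$-comaximality, so both sides localize to $I_i^n D_M$ or to $D_M$.) Taking the intersection over $n$ yields
\[
\bigcap_{n=1}^{\infty}(I^n)_v = \bigcap_{j=1}^{m} \bigcap_{n=1}^{\infty}(I_j^n)_v.
\]
If no $P_i$ has height one, then by Lemma~\ref{ZE} each $Q_j := \bigcap_n (I_j^n)_v$ is a nonzero ideal of $D$; choosing $0 \neq q_j \in Q_j$, the product $q_1 \cdots q_m$ is a nonzero element of $\bigcap_j Q_j$ because $D$ is a domain, so $\bigcap_{n=1}^{\infty}(I^n)_v \neq (0)$. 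Taking the contrapositive completes the proof.

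The main obstacle is the displayed identity $(I^n)_v = \bigcap_j (I_j^n)_v$: this is where the $t$-comaximality of the packets and the PVMD structure are genuinely used, and where one must verify that passing the powers through the $t$-operation still produces pairwise $t$-comaximal packets so that \cite{EGZ} (or the local check) applies. Once that identity is secured, the forward direction is immediate from monotonicity and the reverse direction from the elementary fact that a finite intersection of nonzero ideals in an integral domain is nonzero.
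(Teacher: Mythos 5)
Your proof is correct and follows essentially the same route as the paper's: decompose $I$ into pairwise $t$-comaximal packets via \cite[Theorem 1.2]{EGZ}, establish $(I^n)_v = (I_1^n)_v \cap \cdots \cap (I_m^n)_v$, interchange the intersections, and apply Lemma~\ref{ZE} to each packet. Your write-up is in fact a bit more careful than the paper's, since you justify the $t$-comaximality of the powers and make explicit the fact that a finite intersection of nonzero ideals in a domain is nonzero, both of which the paper leaves implicit.
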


\begin{proof} \label{ZG}
By \cite[Theorem 1.2]{EGZ}, $I = (I_1 \cdots I_m)_t$, where $I_1, \ldots, I_m$ are
$t$-comaximal packets each wih $I_i$ contained in a unique minimal prime ideal $P_i$.
By $t$-comaximality, we have $(I^n)_v = (I_1^n \cdots I_m^n) = (I_1^n)_v \cap \cdots \cap (I_m^n)_v$.
Hence, $\bigcap_{n = 1}^{\infty}(I^n)_v =
(\bigcap_{n = 1}^{\infty}(I_1^n)_v) \cap \cdots \cap (\bigcap_{n = 1}^{\infty}(I_m^n)_v)$.
Thus, $\bigcap_{n = 1}^{\infty}(I^n)_v = (0)$ if and only if some
$\bigcap_{n = 1}^{\infty}(I_i^n)_v = (0)$, if and only if ht$(P_i) = 1$.
\end{proof}

\begin{proposition}
Let $D$ be a PVMD URD. Then $D$ is a generalized Krull domain
if and only if $D$ is completely integrally closed.
\end{proposition}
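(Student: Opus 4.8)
The plan is to prove the nontrivial implication; the converse is immediate, since every generalized Krull domain is completely integrally closed (as recorded in the Introduction), and a generalized Krull domain is of course a PVMD. So I assume $D$ is a completely integrally closed PVMD URD and aim to show that $D$ is a locally finite intersection of rank-one (essential) valuation domains.

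First I would establish the key local fact: \emph{every minimal prime of a nonzero proper principal ideal of $D$ has height one}. Given a nonunit $0 \neq x \in D$, the ideal $(x)$ is a proper $t$-invertible $t$-ideal, so the URD hypothesis lets me write $(x) = (I_1 \cdots I_m)_t = I_1 \cap \cdots \cap I_m$ with the $I_j$ pairwise $t$-comaximal packets and $\sqrt{I_j} = P_j$ (so the $P_j$ are exactly the minimal primes of $(x)$). Since $D$ is completely integrally closed, Theorem~\ref{B} gives $\bigcap_{n=1}^{\infty}(I_j^n)_v = (0)$ for each $j$; applying Lemma~\ref{ZE} to the packet $I_j$ then forces $\mathrm{ht}(P_j) = 1$.

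Next I would promote this to: \emph{every maximal $t$-ideal $P$ has height one}. Suppose not, so there is a nonzero prime $Q \subsetneq P$. I choose $0 \neq x \in P \setminus Q$; since $\prod_j I_j \subseteq (x) \subseteq P$ and $P$ is prime, some $I_j \subseteq P$, whence $P_j = \sqrt{I_j} \subseteq P$, with $\mathrm{ht}(P_j) = 1$ by the previous step. Moreover $x \in (x) \subseteq I_j \subseteq P_j$, so $P_j \not\subseteq Q$. Passing to the valuation domain $D_P$, whose spectrum is totally ordered, $P_j \not\subseteq Q$ gives $QD_P \subsetneq P_j D_P$ with $QD_P \neq (0)$; but $(D_P)_{P_j D_P} = D_{P_j}$ shows $P_j D_P$ has height one in $D_P$, a contradiction. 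Hence every maximal $t$-ideal has height one, so every $D_M$ (for $M \in t\text{-Max}(D)$) is a rank-one valuation domain.

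Finally I would assemble the conclusion. With height one in hand, each maximal $t$-ideal $P$ is a minimal prime of every $(x)$ it contains, so $P = P_j = \sqrt{I_j}$ for a packet $I_j \subseteq P$; since the only maximal $t$-ideal over $\sqrt{I_j} = P$ is $P$ itself, the packet $I_j$ is a rigid ideal and $P$ is potent, so Theorem~\ref{X} delivers that $D$ is a generalized Krull domain. (Alternatively one can bypass Theorem~\ref{X}: the maximal $t$-ideals containing a given nonunit $x$ are height-one primes containing $(x)$, hence minimal primes of $(x)$, of which the URD hypothesis guarantees only finitely many; thus $D = \bigcap_{M \in t\text{-Max}(D)} D_M$ has finite $t$-character and is a locally finite intersection of rank-one valuation domains.) I expect the main obstacle to be the second step — upgrading ``minimal primes of principal ideals have height one'' to ``all maximal $t$-ideals have height one'' — because a height-one prime sitting inside $P$ does not a priori bound the height of $P$; it is the localization at $P$, exploiting that $D_P$ is a valuation domain with totally ordered spectrum and that heights are preserved under $(D_P)_{P_j D_P} = D_{P_j}$, that makes the argument go through.
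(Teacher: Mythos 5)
Your proof is correct and takes essentially the same route as the paper's: both use the URD packet decomposition of $(x)$ together with complete integral closedness (you via Theorem~\ref{B}, the paper via Corollary~\ref{L21}) and Lemma~\ref{ZE} to show every minimal prime of a nonzero principal ideal has height one, and then kill any putative nonzero prime $Q$ below a maximal $t$-ideal by locating a height-one minimal prime $P_j$ not containing $Q$ and invoking the total ordering of primes in the valuation localization. Your closing step through potency and Theorem~\ref{X} is a mild detour, but your parenthetical alternative (height one plus finitely many minimal primes gives finite $t$-character, hence a locally finite intersection of rank-one essential valuation domains) is exactly the paper's implicit conclusion.
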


\begin{proof}
A generalized Krull domain is completely integrally closed.
Conversely, let $D$ be a PVMD URD that is completely integrally closed.
Let $x$ be a nonzero nonunit of $D$. Then $(x) = (I_{1} \cdots I_{m})_t$,
where $I_1, \ldots, I_m$ are pairwise $t$-comaximal $t$-ideals with $P_i$
the unique minimal prime ideal containing $I_i$. Then $P_1, \ldots, P_m$ are the
minimal prime ideals of $(x)$. Now $D$ is completely integrally closed;
so $\bigcap_{n = 1}^{\infty}(I_i^n)_v = (0)$ for every $1 \leq i \leq m$ by Corollary~\ref{L21}.
Hence, by Lemma~\ref{ZE}, every $P_i$ has height one. So every prime ideal minimal
over $(x)$ has height one and there are only finitely many of them.
To show that $D$ is a generalized Krull domain, it is enough to show that
$D$ has no maximal $t$-ideal of height greater than one.
By way of contradiction, assume that $M$ is a maximal $t$-ideal of $D$
with a nonzero prime ideal $Q \subsetneq M$. Let $x \in M \setminus Q$.
We can shrink $M$ to a prime ideal $P$ minimal over $(x)$; so $P$ has height one.
Now the prime ideals contained in $M$ are totally ordered.
Since $x \notin Q$, we must have $Q \subsetneq P$, a contradiction.
\end{proof}

Because a ring of Krull type is a PVMD URD \cite[Corollary 1.10]{EGZ}, we have the following corollary.

\begin{corollary} \emph{(cf. Proposition~\ref{V})}
A ring $D$ of Krull type is a generalized Krull domain if and
only if $D$ is completely integrally closed.
\end{corollary}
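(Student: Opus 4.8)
The plan is to leverage the immediately preceding Proposition, which establishes the desired equivalence for a PVMD URD, and then show that a ring of Krull type is automatically a PVMD URD. Once that reduction is in place, the corollary is essentially immediate: the forward direction (generalized Krull domain $\Rightarrow$ completely integrally closed) is already known since a generalized Krull domain is a locally finite intersection of essential rank-one valuation domains, hence an intersection of completely integrally closed domains. The reverse direction would then follow by directly invoking the Proposition once the ``PVMD URD'' hypothesis is verified.

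First I would recall that a ring of Krull type is by definition a locally finite intersection of essential valuation domains, and that such a ring is a PVMD. This is stated in the excerpt. The key additional fact needed is that a ring of Krull type is a PVMD URD, i.e., that every proper $t$-invertible $t$-ideal admits a (unique) representation as a $t$-product of pairwise $t$-comaximal packets. The cleanest route is to cite the characterization recalled in Section~\ref{s:4}: a PVMD is a URD if and only if every nonzero principal ideal has only finitely many minimal prime ideals \cite[Theorem 1.9]{EGZ}. Since a ring of Krull type is a \emph{locally finite} intersection of essential valuation domains, each nonzero nonunit lies in only finitely many of the defining valued primes, which forces each principal ideal to have only finitely many minimal prime ideals. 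Thus the finite-character hypothesis built into ``ring of Krull type'' is exactly what supplies the URD property. In fact, the excerpt states this reduction outright as \cite[Corollary 1.10]{EGZ}, so I would simply cite it.

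With the reduction secured, the proof writes itself. The plan is to state: a ring of Krull type is a PVMD URD by \cite[Corollary 1.10]{EGZ}, and therefore the preceding Proposition applies verbatim, giving that a ring of Krull type is a generalized Krull domain if and only if it is completely integrally closed. I would then note for completeness that this recovers the equivalence (1) $\Leftrightarrow$ (5) of Proposition~\ref{V} via a different argument, which is why the statement is flagged ``cf. Proposition~\ref{V}''.

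The main obstacle, such as it is, is essentially bookkeeping rather than mathematical: one must be confident that the ``locally finite'' in the definition of ring of Krull type genuinely delivers the URD finiteness condition, and that no additional hypothesis (such as independence of the defining family) is secretly required. The subtle point is that the defining essential valuation domains in a ring of Krull type need not be localizations at maximal $t$-ideals a priori, but for a PVMD the maximal $t$-ideals give the canonical defining family $D = \bigcap_{M \in t\text{-Max}(D)} D_M$, and local finiteness transfers to this family. Once this is checked, the corollary follows with no further work, so I expect the entire proof to reduce to a single sentence citing \cite[Corollary 1.10]{EGZ} and the preceding Proposition.
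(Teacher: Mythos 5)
Your proposal matches the paper's argument exactly: the paper derives this corollary in one line by citing that a ring of Krull type is a PVMD URD \cite[Corollary 1.10]{EGZ} and then applying the immediately preceding Proposition. Your additional verification of the URD finiteness condition via \cite[Theorem 1.9]{EGZ} is sound but not needed beyond the citation.
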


An integral domain $D$ is a \emph{generalized UFD} (\emph{GUFD}) if $D$ is a GCD domain that satisfies
(1) every nonzero nonunit of $D$ is expressible as a finite product of rigid elements of $D$ and
(2) every rigid element $r$ of $D$ is such that for every factor $s$ of $r$, $r | s^n$ for some integer $n \geq 1$.
Equivalently, $D$ is a generalized Krull domain with $Cl_t(D) = 0$.
The interested reader may consult \cite{AAZ1} for various other characterizations of these integral domains.

These results lead to the following result.

\begin{theorem}  \label{ZK}
The following statements are equivalent for a GCD domain $D$ that is also a URD.

\emph{(1)} $D$ is an intersection of rank-one
valuation domains.

\emph{(2)} $D$ is completely integrally closed.

\emph{(3)} $D[[X]]$ is integrally closed.

\emph{(4)} $D$ is Archimedean.

\emph{(5)} Every packet of $D$ is contained in a height-one prime ideal of $D$.

\emph{(6)} $D$ is a GUFD.
\end{theorem}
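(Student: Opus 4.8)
The plan is to prove Theorem~\ref{ZK} by establishing a cycle of implications together with a few direct equivalences, leaning heavily on the machinery already developed for PVMD URDs, on Lemma~\ref{ZE}, and on the fact that a GCD domain is a PVMD in which every $t$-invertible $t$-ideal is principal. First I would observe that many of these equivalences are immediate from earlier results: (2) $\Leftrightarrow$ (4) is exactly Corollary~\ref{P}(1) (a GCD domain is completely integrally closed if and only if it is Archimedean), and (2) $\Leftrightarrow$ (3) follows from Proposition~\ref{S} applied to $D$ (a GCD domain, hence an AGCD domain), since for an AGCD domain $D[[X]]$ integrally closed is equivalent to $D$ completely integrally closed. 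So the real content is tying (1), (5), and (6) to the completely-integrally-closed condition.

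The central link is between (2) and (5). For (2) $\Rightarrow$ (5): if $D$ is completely integrally closed and $I$ is a packet with unique minimal prime $P$, then $\bigcap_{n=1}^{\infty}(I^n)_v = (0)$ by Corollary~\ref{L21}, and Lemma~\ref{ZE} then forces $\operatorname{ht}(P) = 1$, so the packet $I$ sits inside a height-one prime. Conversely, for (5) $\Rightarrow$ (2), I would use the URD structure: every nonzero nonunit $x$ has $(x) = (I_1 \cdots I_m)_t$ with the $I_i$ pairwise $t$-comaximal packets and $P_i$ the unique minimal prime over $I_i$; hypothesis (5) gives each $P_i$ of height one, so by Lemma~\ref{ZE} each $\bigcap_{n=1}^{\infty}(I_i^n)_v = (0)$, whence (via the $t$-comaximal decomposition as in Corollary~\ref{ZF}) $\bigcap_{n=1}^{\infty}(I^n)_v = (0)$ for $I = (x)$; since in a GCD domain every proper $t$-invertible $t$-ideal is principal, Theorem~\ref{B} then yields complete integral closure. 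This is essentially the argument in the preceding unnamed Proposition showing a PVMD URD is a generalized Krull domain iff it is completely integrally closed, specialized to the GCD case.

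For the remaining implications I would close the loop through the generalized Krull / GUFD framework. Since $D$ is a GCD domain URD that is completely integrally closed, the preceding proposition shows $D$ is a generalized Krull domain, and a GCD generalized Krull domain is precisely a GUFD (a generalized Krull domain with $Cl_t(D) = 0$, the class group of a GCD domain being trivial), giving (2) $\Rightarrow$ (6). For (6) $\Rightarrow$ (1), a GUFD is a generalized Krull domain, hence by Proposition~\ref{V} a locally finite, and in particular an arbitrary, intersection of rank-one valuation domains. Finally (1) $\Rightarrow$ (2) is immediate: a rank-one valuation domain is completely integrally closed, and an intersection of completely integrally closed domains is completely integrally closed. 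Assembling these, the cycle (1) $\Rightarrow$ (2) $\Rightarrow$ (6) $\Rightarrow$ (1) closes, with (2) $\Leftrightarrow$ (3), (2) $\Leftrightarrow$ (4), and (2) $\Leftrightarrow$ (5) attached as spokes.

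The main obstacle I anticipate is the (5) $\Rightarrow$ (2) direction, specifically verifying cleanly that hypothesis (5) — a statement about packets — transfers to the vanishing of $\bigcap_{n=1}^{\infty}(I^n)_v$ for \emph{every} proper principal (equivalently $t$-invertible) ideal. The delicate point is that (5) constrains only packets, so I must invoke the URD decomposition of $(x)$ into $t$-comaximal packets and the height-one conclusion of Lemma~\ref{ZE} for each factor, then reassemble via the intersection formula of Corollary~\ref{ZF}; I should be careful that the unique minimal prime $P_i$ of each packet is exactly the prime whose height Lemma~\ref{ZE} governs, and that $t$-comaximality genuinely splits the infinite intersection as a finite intersection of the factorwise intersections. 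Once that bookkeeping is in place, the rest of the theorem is a routine assembly of results already proved.
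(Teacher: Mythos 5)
Your proposal is correct: every implication you claim is supported by the results you cite, and the hub-and-spoke structure (cycle $(1)\Rightarrow(2)\Rightarrow(6)\Rightarrow(1)$ with $(3)$, $(4)$, $(5)$ attached to $(2)$) does establish the full equivalence. However, your route is organized quite differently from the paper's. The paper proves the single cycle $(1)\Rightarrow(2)\Rightarrow(3)\Rightarrow(4)$ by citing Ohm's Theorem 0.2, gets $(4)\Rightarrow(5)$ from Lemma~\ref{ZE} (using that packets in a GCD domain are principal, so Archimedean suffices), and then proves $(5)\Rightarrow(6)$ by a direct element-wise argument: factor a nonzero nonunit $x$ into mutually co-prime packets $x=x_1\cdots x_n$, note each $(x_i)$ has a height-one minimal prime $P_i$, so $D_{P_i}$ is a rank-one valuation domain, making each $x_i$ a prime quantum and $D$ a GUFD; finally $(6)\Rightarrow(1)$ comes from the cited result that a GUFD is a generalized Krull domain. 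You avoid the element-wise prime-quantum argument entirely: you reach $(6)$ from $(2)$ via the preceding unnamed proposition (PVMD URD is generalized Krull iff completely integrally closed) together with the observation that a GCD domain has trivial $t$-class group, and you replace the appeal to Ohm by the paper's own internal results (Proposition~\ref{S} for $(2)\Leftrightarrow(3)$, Corollary~\ref{P}(1) for $(2)\Leftrightarrow(4)$, and Theorem~\ref{B} plus Corollary~\ref{ZF} for $(5)\Rightarrow(2)$). What your approach buys is self-containedness within the paper's machinery, an explicit direct proof of $(5)\Rightarrow(2)$ (which the paper obtains only by traversing the cycle), and a clearer display of complete integral closure as the organizing condition; what the paper's approach buys is brevity and a concrete exhibition of the GUFD structure (rigid elements, prime quanta) rather than the abstract characterization as a generalized Krull domain with $Cl_t(D)=0$. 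One small caveat: your remark that $(5)\Rightarrow(2)$ is "essentially the argument" of the unnamed proposition is slightly misstated, since that proposition runs the implication in the opposite direction (complete integral closure is the hypothesis there, height-one the conclusion); but the ingredients coincide and your actual argument, including the bookkeeping that the height-one prime containing a packet must equal its unique nonzero minimal prime, is sound.
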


\begin{proof}
(1) $\Rightarrow$ (2) $\Rightarrow$ (3) $\Rightarrow$ (4) were
established in \cite[Theorem 0.2]{Ohm}.

(4) $\Rightarrow$ (5) follows from Lemma~\ref{ZE}.

(5) $\Rightarrow$ (6) In a GCD URD, every nonzero nonunit $x$ is expressible as a finite product of
mutually co-prime packets, i.e., elements with unique minimal prime ideals. Say
$x = x_{1} \cdots x_{n}$, where every $(x_{i})$ has a unique minimal prime $P_{i}$,
which by (5) has height one. But then $D_{P_{i}}$ is a rank-one
valuation domain, making $x_{i}$ a rigid element such that for every nonunit
factor $r$ of $x_{i}$, $x_{i}$ divides $r^{n}$ for some integer $n \geq 1$. This makes every
packet a prime quantum and $D$ a GUFD, as described in \cite[page 402]{AAZ1} and in
Zafrullah's doctoral dissertation \cite{Zd}.

(6) $\Rightarrow$ (1) As shown
in \cite[Theorem 10]{AAZ1}, a GUFD is a generalized Krull domain, and thus is
a locally finite intersection of rank-one (essential) valuation domains.
\end{proof}

Ohm proved the equivalence of (1) through (5) of Theorem~\ref{ZK} for finite intersections of
valuation domains \cite[Corollary 1.9]{Ohm}. As a GCD domain of finite $t$-character is a URD as well, we have
the following repeat corollary.

\begin{corollary}  \label{ZL}
A GCD domain $D$ of finite $t$-character is a GUFD if and only if
$D[[X]]$ is integrally closed, equivalently, if and only if $D$ is completely integrally closed.
\end{corollary}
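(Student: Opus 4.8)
The plan is to deduce this corollary directly from Theorem~\ref{ZK} by verifying that the hypotheses of that theorem are met, namely that a GCD domain $D$ of finite $t$-character is automatically a URD. Once this is in hand, Theorem~\ref{ZK} gives the equivalence of its conditions (6), (3), and (2) --- that is, ``$D$ is a GUFD'' $\Leftrightarrow$ ``$D[[X]]$ is integrally closed'' $\Leftrightarrow$ ``$D$ is completely integrally closed'' --- which is precisely the statement to be proved.

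First I would establish the URD claim. By \cite[Theorem 1.9]{EGZ}, a PVMD is a URD if and only if every nonzero principal ideal has only finitely many minimal prime ideals, and a GCD domain is a PVMD, so it suffices to bound the number of minimal primes of $(x)$ for each nonzero nonunit $x$. As recalled in the paragraph preceding Lemma~\ref{ZE}, a prime ideal minimal over a principal ideal of a PVMD is a prime $t$-ideal, and hence is contained in a maximal $t$-ideal. By finite $t$-character, $x$ belongs to only finitely many maximal $t$-ideals, so every minimal prime of $(x)$ is contained in one of these finitely many $M$. Since $D$ is a PVMD, each $D_M$ is a valuation domain, so the prime ideals contained in a fixed $M$ are totally ordered by inclusion; consequently $xD_M$ has a unique minimal prime in $D_M$, and thus at most one minimal prime of $(x)$ is contained in any given $M$. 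Combining these two observations, $(x)$ has only finitely many minimal primes, so $D$ is a URD.

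The remaining step is immediate: $D$ is now a GCD domain that is a URD, so Theorem~\ref{ZK} applies verbatim, and the desired equivalences are among the listed conditions. I would then state the conclusion, noting that ``(completely) integrally closed'' in the corollary refers to conditions (3) and (2) of Theorem~\ref{ZK}, both of which are equivalent to $D$ being a GUFD (condition (6)).

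I expect the only genuine content to be the URD verification in the first paragraph; the rest is a citation of Theorem~\ref{ZK}. The mild subtlety there is the uniqueness-of-minimal-prime argument in the valuation domains $D_M$, which is what prevents two distinct minimal primes of $(x)$ from hiding inside the same maximal $t$-ideal and thereby keeps their total number finite.
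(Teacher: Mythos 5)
Your proposal is correct and follows essentially the same route as the paper: the paper's entire justification is the remark that a GCD domain of finite $t$-character is a URD, after which Corollary~\ref{ZL} is read off from Theorem~\ref{ZK}. You simply supply the details of the URD verification (via \cite[Theorem 1.9]{EGZ} and the total ordering of primes in the valuation localizations $D_M$) that the paper leaves as a known fact, and that verification is sound.
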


Of course, it would be interesting to see if $D$ is a PVMD URD and $D[[X]]$
is integrally closed implies that $D$ is completely integrally
closed. As it stands, we can only make decisions about GCD domains and
AGCD domains, even for the URD case. We have kept the AGCD URD case as the
last item because it is different from the GCD case in only a few minor
details.

Call an integral domain $D$ an \emph{almost GUFD} if $D$ is a generalized Krull
domain with torsion $t$-class group. Of course, being a generalized Krull
domain, every nonzero nonunit $x \in D$ is expressible as a $t$-product
$(x) = (I_1 \cdots I_n)_{t}$, where $I_i = xD_{P_i}
\cap D$ and $P_i$ ranges over all the height-one prime ideals of $D$ containing $x$
\cite[Corollary 2.3]{AMZ}. Now, as $Cl_t(D)$ is torsion, there are
integers $n_{i} \geq 1$ such that $(I_i^{n_{i}})_{t}$ is a principal
$P_{i}$-primary ideal of $D$.

\begin{theorem} \label{ZM}
The following statements are equivalent for an AGCD domain $D$ that is also a URD.

\emph{(1)} $D$ is a locally finite intersection of rank-one
valuation domains.

\emph{(2)} $D$ is an intersection of rank-one
valuation domains.

\emph{(3)} $D$ is completely integrally closed.

\emph{(4)} $D[[X]]$ is integrally closed.

\emph{(5)} $D$ is integrally closed and Archimedean.

\emph{(6)} $D$ is integrally closed and every packet of
$D$ is contained in a height-one prime ideal of $D$.

\emph{(7)} $D$ is an almost GUFD.
\end{theorem}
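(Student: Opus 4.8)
The plan is to establish all seven equivalences through a cycle that uses condition (3), complete integral closure, as the central hub, since the AGCD and URD hypotheses interact most cleanly with it. Throughout I would exploit two structural facts: that an integrally closed AGCD domain is a PVMD with torsion $t$-class group (so the PVMD machinery of Sections~\ref{s:2} and~\ref{s:4} becomes available as soon as integral closure is known), and that, under the standing URD hypothesis, every proper $t$-invertible $t$-ideal has only finitely many minimal primes and factors as a finite $t$-product of packets \cite{EGZ}.

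First I would dispatch the valuation-theoretic arc $(1) \Rightarrow (2) \Rightarrow (3) \Rightarrow (4) \Rightarrow (5) \Rightarrow (3)$. The implication $(1) \Rightarrow (2)$ is immediate, and $(2) \Rightarrow (3)$ holds because a rank-one valuation domain is completely integrally closed and an intersection of completely integrally closed domains is completely integrally closed. For $(3) \Rightarrow (4)$ I would invoke the fact that $D[[X]]$ is completely integrally closed whenever $D$ is, hence integrally closed (this is also Proposition~\ref{S}). The implication $(4) \Rightarrow (5)$ is Ohm's theorem \cite[Theorem 0.1]{Ohm}. Finally $(5) \Rightarrow (3)$ is exactly Corollary~\ref{R}, since $D$ is then an integrally closed AGCD domain that is Archimedean. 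This closes the loop, showing $(2)$--$(5)$ equivalent and implied by $(1)$.

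Next I would treat the packet condition by proving $(3) \Leftrightarrow (6)$. For $(3) \Rightarrow (6)$, complete integral closure forces integral closure, and by Theorem~\ref{B} (or Corollary~\ref{L21}) one has $\bigcap_{n=1}^{\infty}(I^n)_v = (0)$ for every proper $t$-invertible $t$-ideal $I$, in particular for every packet; Lemma~\ref{ZE} then says the unique minimal prime of such a packet has height one, which is $(6)$. For the reverse $(6) \Rightarrow (3)$, note that since $D$ is an integrally closed AGCD domain it is a PVMD with torsion $t$-class group, hence a PVMD URD, so each proper $t$-invertible $t$-ideal $I$ has finitely many minimal primes $P_1, \dots, P_m$ and factors through packets $I_i$ with $\sqrt{I_i} = P_i$. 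Condition $(6)$ places each $I_i$ inside a height-one prime, and since $P_i$ is the unique minimal prime over $I_i$ this forces $\mathrm{ht}(P_i) = 1$; Corollary~\ref{ZF} then yields $\bigcap_{n=1}^{\infty}(I^n)_v = (0)$, and Theorem~\ref{B} gives complete integral closure.

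Finally I would close with $(3) \Rightarrow (7) \Rightarrow (1)$. Under $(3)$, $D$ is a completely integrally closed PVMD URD, so the earlier proposition characterizing PVMD URDs (a PVMD URD is a generalized Krull domain if and only if it is completely integrally closed) makes $D$ a generalized Krull domain; combined with its torsion $t$-class group this is precisely the definition of an almost GUFD, giving $(7)$. The implication $(7) \Rightarrow (1)$ is then just unwinding the definition of a generalized Krull domain as a locally finite intersection of essential rank-one valuation domains. I expect the one genuinely delicate step to be $(6) \Rightarrow (3)$: one must be careful that it is the URD hypothesis that supplies the finiteness of the minimal-prime set needed to apply Corollary~\ref{ZF}, and that the passage from ``a packet lies in a height-one prime'' to ``its radical has height one'' relies on the uniqueness of the minimal prime of a packet.
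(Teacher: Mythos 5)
Your proposal is correct, and its first arc coincides with the paper's: both run $(1) \Rightarrow (2) \Rightarrow (3) \Rightarrow (4) \Rightarrow (5) \Rightarrow (3)$ using the completely-integrally-closed facts for rank-one valuation domains and power series plus Ohm's theorem, with Corollary~\ref{R} closing the loop, and both get $(3) \Rightarrow (6)$ from Corollary~\ref{L21} together with Lemma~\ref{ZE}. Where you diverge is in how $(6)$ and $(7)$ are tied back in. The paper proves $(6) \Rightarrow (7)$ directly: it factors a nonzero nonunit $x$ as a $t$-product of $t$-comaximal packets $I_i$ with unique minimal primes $P_i$, uses $(6)$ to get $\mathrm{ht}(P_i)=1$ so that $I_i = xD_{P_i} \cap D$ with $D_{P_i}$ a rank-one valuation domain, and then invokes \cite[Corollary 2.3]{AMZ} to conclude $D$ is a generalized Krull domain (hence an almost GUFD, since the $t$-class group is torsion); $(7) \Rightarrow (1)$ then finishes the cycle. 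You instead prove $(6) \Rightarrow (3)$ — using the URD hypothesis to get finitely many minimal primes for each proper $t$-invertible $t$-ideal, Corollary~\ref{ZF} to kill $\bigcap_n (I^n)_v$, and Theorem~\ref{B} for complete integral closure — and then prove $(3) \Rightarrow (7)$ by quoting the Section~\ref{s:4} proposition that a PVMD URD is a generalized Krull domain if and only if it is completely integrally closed. Your route is more modular: it recycles results already proved in the paper (Theorem~\ref{B}, Corollary~\ref{ZF}, and the PVMD URD proposition) and avoids a fresh appeal to \cite{AMZ}, at the cost of a longer logical path and of having to verify the finiteness-of-minimal-primes point you correctly flag as the delicate step; the paper's direct $(6) \Rightarrow (7)$ is shorter and mirrors the GCD argument of Theorem~\ref{ZK}, but leans on the external reference. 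Both decompositions yield all seven equivalences, so this is a legitimate alternative proof rather than a gap.
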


\begin{proof}
(1) $\Rightarrow$ (2) is clear.

(2) $\Rightarrow$ (3) $\Rightarrow$ (4) $\Rightarrow$ (5) were
established in \cite[Theorem 0.2]{Ohm}.

(5) $\Rightarrow$ (3) follows from Corollary~\ref{R}.

(3) $\Rightarrow$ (6) follows from Corollary~\ref{L21} and Lemma~\ref{ZE}.

(6) $\Rightarrow$ (7) In an AGCD URD, every nonzero nonunit $x$ is
expressible as a finite $t$-product of mutually $t$-comaximal packets.
Say $(x) = (I_{1} \cdots I_{n})_{t}$,
where every $I_{i}$ has a unique minimal prime ideal $P_{i}$, which by (6) has
height one. Thus, $D_{P_i}$ is a rank-one valuation domain, making
$I_{i} = xD_{P_i} \cap D$. Hence, by \cite[Corollary 2.3]{AMZ}, $D$ is a
generalized Krull domain that is also an AGCD domain.

(7) $\Rightarrow$ (1) This follows since a generalized Krull domain is a locally
finite intersection of rank-one (essential) valuation domains.
\end{proof}

%%%%%%%%%%%%%%%%%%%%%%%%%%%%%%%%%%%%%%%%%%%%%%%%%%
%%%%%%%%%%%%%%%%%%%%%%%%%%%%%%%%%%%%%%%%%%%%%%%%%%
%%%%%%%%%%%%%%%%%%%%%  Archimedean-like conditions  %%%%%%%%%%%%%%%%%%%%
%%%%%%%%%%%%%%%%%%%%%%%%%%%%%%%%%%%%%%%%%%%%%%%%%%
%%%%%%%%%%%%%%%%%%%%%%%%%%%%%%%%%%%%%%%%%%%%%%%%%%

\section{Archimedean-like conditions}     \label{s:5}

In this final section, we consider several ``Archimedean-like'' conditions on an integral domain $D$.

\begin{theorem}  \label{LT}
Consider the following statements for an integral domain $D$.

(1) $D$ is completely integrally closed.

(2) $\bigcap_{n = 1}^{\infty}(a/b)^n = (0)$ for every $a, b \in D$ with $(b) \nsubseteq (a)$, i.e., $D$ is strongly Archimedean.

(3) $\bigcap_{n = 1}^{\infty}((a^n) : (b^n)) = (0)$  for every $a, b \in D$ with $(b) \nsubseteq (a)$.

(4)  $\bigcap_{n = 1}^{\infty}(I^n)_v = (0)$ for every proper $v$-ideal $I$ of $D$.

(5)  $\bigcap_{n = 1}^{\infty}I^n = (0)$ for every proper $v$-ideal $I$ of $D$.

(6)  $\bigcap_{n = 1}^{\infty}(((a) : (b))^n)_v = (0)$  for every $0 \neq a, b \in D$ with $(b) \nsubseteq (a)$.

(7)  $\bigcap_{n = 1}^{\infty}((a) : (b))^n = (0)$  for every $a, b \in D$ with $(b) \nsubseteq (a)$.

(8)  $\bigcap_{n = 1}^{\infty}(x^n) = (0)$ for every nonunit $x \in D$, i.e., $D$ is Archimedean.

Then we have the following implications.
\[
\xymatrix{
(1) \ar@2{<->}[r] & (2) \ar@2{<->}[r] & (3) \ar@2{->}[r] & (4)
\ar@2{->}[r] \ar@2{<->}[d] & (5) \ar@2{<->}[d] & \\
& & & (6) \ar@2{->}[r] & (7) \ar@2{->}[r] & (8)
}
\]
\end{theorem}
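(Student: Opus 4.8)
The plan is to reduce the entire diagram to two elementary facts about colon ideals, after which every arrow becomes either a citation to an earlier result or a one-line monotonicity argument. The two facts are as follows. First, for $0 \neq a, b \in D$ one has the identity $(a^n):(b^n) = (a/b)^n D \cap D$, since $x \in D$ lies in the left side iff $xb^n = a^n y$ for some $y \in D$, i.e. iff $x = (a/b)^n y \in (a/b)^n D$. Taking $n = 1$, the ideal $(a):(b) = (a/b)D \cap D$ is an intersection of two divisorial ideals, hence is itself a $v$-ideal, and it is proper exactly when $b \notin (a)$, i.e. when $(b) \nsubseteq (a)$. Second, every proper $v$-ideal $I$ sits inside such a colon ideal: since $I = I_v \subsetneq D$ does not contain $1$, the intersection $I_v = \bigcap\{xD : I \subseteq xD\}$ cannot have all its factors containing $1$, so some $x = a/b$ satisfies $1 \notin (a/b)D$, i.e. $(b) \nsubseteq (a)$, whence $I \subseteq (a/b)D \cap D = (a):(b)$. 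This last point is exactly the mechanism already used in Corollary~\ref{L21}.

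I would begin with the top row. The equivalence (1) $\Leftrightarrow$ (2) is precisely Proposition~\ref{L2}. For (2) $\Leftrightarrow$ (3), the first identity gives $\bigcap_{n=1}^\infty((a^n):(b^n)) = (\bigcap_{n=1}^\infty (a/b)^n D) \cap D$ for each fixed pair $(a,b)$. Writing $M = \bigcap_{n=1}^\infty (a/b)^n D$, a $D$-submodule of $K$, I claim $M = (0)$ iff $M \cap D = (0)$: the forward direction is trivial, and for the converse any $0 \neq u \in M$ clears denominators as $u = r/s$ with $r, s \in D$, giving $0 \neq r = su \in M \cap D$ because $M$ is a $D$-module. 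Hence (2) and (3) hold or fail together for each $(a,b)$, so the two global statements are equivalent. For (3) $\Rightarrow$ (4), I would invoke (3) $\Leftrightarrow$ (1) together with Corollary~\ref{L21}, which states exactly that a completely integrally closed domain satisfies $\bigcap_{n=1}^\infty(I^n)_v = (0)$ for every proper $v$-ideal $I$.

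The remaining arrows are short. Both (4) $\Rightarrow$ (5) and (6) $\Rightarrow$ (7) follow by intersecting $J^n \subseteq (J^n)_v$ over $n$. For (4) $\Leftrightarrow$ (6) and (5) $\Leftrightarrow$ (7), I would use the colon-ideal facts: setting $J = (a):(b)$, which is a proper $v$-ideal whenever $(b) \nsubseteq (a)$, the forward implications are the special case $I = J$ of (4) and (5) respectively, while the reverse implications use that a proper $v$-ideal $I$ is contained in some $J = (a):(b)$, so $(I^n)_v \subseteq (J^n)_v$ and $I^n \subseteq J^n$ by Lemma~\ref{T}(1), and intersecting over $n$ recovers (4) from (6) and (5) from (7). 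Finally, (7) $\Rightarrow$ (8) is the choice $a = x$, $b = 1$: for a nonunit $x$ we have $(1) = D \nsubseteq (x)$ and $(x):(1) = (x)$, so $\bigcap_{n=1}^\infty(x^n) = \bigcap_{n=1}^\infty((x):(1))^n = (0)$ by (7).

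The main obstacle, such as it is, lies entirely in the two colon-ideal facts, and in particular in recognizing that $(a):(b)$ is always divisorial and that every proper $v$-ideal is dominated by one such colon ideal; once these are secured, the diagram collapses to monotonicity together with the single denominator-clearing argument in (2) $\Leftrightarrow$ (3). No individual implication is genuinely hard, but care is needed to track which statements concern the $v$-closed powers $(I^n)_v$ and which concern the ordinary powers $I^n$, since that distinction is precisely what separates the $(1)$–$(4)$–$(6)$ strand from the $(5)$–$(7)$–$(8)$ strand.
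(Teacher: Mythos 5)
Your proposal is correct and follows essentially the same route as the paper's proof: the same citations to Proposition~\ref{L2} and Corollary~\ref{L21}, the identity $(a^n):(b^n) = (a/b)^n \cap D$ for (2)~$\Leftrightarrow$~(3), and the fact that every proper $v$-ideal is contained in some $(a):(b)$ with $(b) \nsubseteq (a)$ for (4)~$\Leftrightarrow$~(6) and (5)~$\Leftrightarrow$~(7). The only difference is that you spell out details the paper leaves implicit (the denominator-clearing argument and the divisoriality of $(a):(b)$), which is harmless.
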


\begin{proof}
(1) $\Leftrightarrow$ (2)  follows from Proposition 2.3.

(2) $\Leftrightarrow$ (3) follows from the fact that
$(a^{n}):(b^{n}) = (a/b)^{n} \cap D$ for every $a,b \in D$ with $(b) \nsubseteq (a)$ and integer $n \geq 1$.

(1) $\Rightarrow$ (4) follows from Corollary~\ref{L21}.

(4) $\Rightarrow$ (5) and (6) $\Rightarrow$ (7) are both clear.

(4) $\Leftrightarrow$ (6) and (5) $\Leftrightarrow$ (7) Note that if $I$ is a proper divisorial ideal of $D$,
then $I \subseteq (a/b) \cap D = (a) : (b)$ for some $a, b \in D$ with $(b) \nsubseteq (a)$.

(7) $\Rightarrow$ (8)  is clear.
\end{proof}

The following examples show that none of the ``$\Rightarrow$" implications in the above theorem can be reversed.

\begin{example}   \label{EL}
(a)  ((4) $\nRightarrow$ (3)) Let $D = k[[X^2,X^3]]$ for a field $k$ (or let $D$ be any one-dimensional
Noetherian Gorenstein domain that is not Dedekind). Then $D$ is not completely integrally closed and every proper
nonzero ideal of $D$ is divisorial. Since $D$ is Noetherian, $\bigcap_{n=1}^{\infty}I^{n} = (0)$ for
every proper nonzero ideal $I$ of $D$, and
as $D$ is one-dimensional Gorenstein, every $I^{n}$ is divisorial. Thus,
$\bigcap_{n=1}^{\infty}(I^{n})_{v} = (0)$ for every proper $v$-ideal of $D$.
Hence, (4) $\nRightarrow$ (1); equivalently, (4) $\nRightarrow$ (3).

(b)   ((5) $\nRightarrow$ (4)) \cite[Example 1.5]{HKM} gives a Noetherian domain $D$
with a maximal $t$-ideal $P$ such that $(P^n)_v = P$ for every integer $n \geq 1$.
Then $\bigcap_{n = 1 }^{\infty}P^n = (0)$ since $D$ is Noetherian,
but $\bigcap_{n = 1}^{\infty}(P^n)_v = P \neq (0)$. Thus, (5) $\nRightarrow$ (4); so also (7) $\nRightarrow$ (6).
 (We are thankful to Evan Houston for this example.)

(c)  ((8) $\nRightarrow$ (7)) Let $V = K + M$ be a non-Noetherian (i.e., non-discrete)
one-dimensional valuation domain with maximal ideal $M$ and $K$ a field that is a subring of $V$.
Then $M^2 = M$; so $M^n = M$ for every integer $n \geq 1$. Suppose that $K$ has a proper subfield $k$.
Then $D = k + M$ is also one-dimensional (but not a valuation domain), and thus satisfies (8).
Let $0 \neq m \in M$, $\alpha \in K \setminus k$, $b  = m$, and $a = \alpha m$.
Then $(b) \nsubseteq (a)$ and $(a) : (b) = M$;
so $\bigcap_{n = 1}^{\infty}((a) : (b))^n = \bigcap_{n = 1}^{\infty}M^n = M \neq (0)$.
Thus, (8) $\nRightarrow$ (7).
\end{example}

For an essential domain, we have (3) $\Leftrightarrow$ (6) (see the proof of Corollary~\ref{X8}),
and thus statements (1) - (4), (6) are all equivalent. In a GCD domain,
or more generally an integrally closed AGCD domain, statements (1) - (8) are all equivalent by Corollary~\ref{R}.
In (4) and (5), we may replace ``$I$ is a proper $v$-ideal of $D$'' with ``$I$ is an ideal of $D$
with $I_v \subsetneq D$. Conjecture \ref{L1} is that statements (1) - (8) are all equivalent for a PVMD.
%\[
%\xymatrix{
% & & & & (5) \ar@2{<->}[dr] \\
%(1) \ar@2{<->}[r] & (2) \ar@2{<->}[r] & (3) \ar@2{->}[r] & (4)
%\ar@2{<->}[dr] \ar@2{->}[ur] & & (7) \ar@2{->}[r] & (8) \\
%& & & & (6) \ar@2{->}[ur]
%}
%\]

\bigskip
ACKNOWLEDGMENT
\newline
 We would like to thank the referee for a careful reading of the paper and several helpful suggestions.

\bigskip

\end{document}